\definecolor{r}{rgb}{0.9,0.3,0.1}
\definecolor{b}{rgb}{0.1,0.3,0.9}
\newtheorem{theo}{Theorem}[section]
\newtheorem{defin}[theo]{Definition}
\newtheorem{prop}[theo]{Proposition}
\newtheorem{coro}[theo]{Corollary}
\newtheorem{lemm}[theo]{Lemma}
\newcommand{\al}{\alpha}
\newcommand{\be}{\beta}
\newcommand{\ga}{\gamma}
\newcommand{\Ga}{\Gamma}
\newcommand{\om}{\omega}
\newcommand{\Om}{\Omega}
\newcommand{\ep}{\epsilon }
\newcommand{\te}{\theta}
\newcommand{\De}{\Delta}
\newcommand{\de}{\delta}
\newcommand{\pa}{\partial}
\newcommand{\R}{{\mathbb R}^n}
\newcommand{\ri}{\rightarrow}
\newcommand{\Rn}{{\mathbb R}^{n-1}}
\newcommand{\na}{\nabla}
\begin{document}
\baselineskip=18pt

\title[]{Boundary value problem of a   non-stationary  Stokes system in a bounded smooth cylinder}

\
\author{Tongkeun Chang}
\address{Department of Mathematics, Yonsei University \\
Seoul, 136-701, South Korea}
\email{chang7357@yonsei.ac.kr}

\author{Bum Ja Jin}
\address{Department of Mathematics, Mokpo National University, Muan-gun 534-729,  South Korea }
\email{bumjajin@hanmail.net}

\thanks{}

\begin{abstract}
In this paper, we intend to study  the boundary value problem of  the  non-stationary  Stokes system
in a bounded smooth cylinder $\Omega\times (0,T)$. As a first step, we  consider the problem in  half-plane cylinder
${\mathbb R}^n_+ \times (0,T), \,\, 0 < T \leq \infty$.
We extend the result of Solonnikov\cite{So} to data in weaker function spaces than the one  considered in \cite{So}.

\end{abstract}

\maketitle

\section{Introduction}
\setcounter{equation}{0}

Let  $\Om$ be a $n$-dimensional bounded $C^2$ domain for $n\geq 3$.
 Let us consider the boundary value problem of a   non-stationary  Stokes system$-$in other words,
 a  non-stationary linearized system of Navier-Stokes equations$-$in a cylindrical domain
 $\Omega\times (0,T)$.
 For a given initial data $f=(f_1,\cdots, f_n)$
 and a given boundary data $g=(g_1,\cdots, g_n)$, find a unknown vector field $u=(u_1,\cdots, u_n)$ and a unknown function $p$
 satisfying the system of equations
\begin{align}\label{maineq}
\begin{array}{ll}\vspace{2mm}
u_t - \De u + \na p =0 & \mbox{ in } \Omega \times (0,T),\\ \vspace{2mm}
div \, u =0  & \mbox{ in } \Omega \times (0,T),\\ \vspace{2mm}
u|_{t=0}= f &  \mbox{ in } \Omega, \\
u|_{\pa \Om \times (0,T)} = g  & \mbox{ on }\partial\Omega \times (0,T).
\end{array}
\end{align}
Here,  $\na p = (\frac{\pa p}{\pa x_1}, \cdots, \frac{\pa
p}{\pa x_n}), \,\, div \, u = \sum_{1 \leq j \leq n} \frac{\pa
u_j}{\pa x_j}$ and $\De$ is the Laplacian.

The  system \eqref{maineq}  has been studied by many mathematicians. Among them,
 Solonnikov\cite{So} showed  that the system \eqref{maineq} has a unique
solution satisfying
\begin{eqnarray}\label{So}
\begin{array}{l}
\| u\|_{W^{2,1 }_{p}(\Om \times (0,T))}\\
 \leq c(T) \Big( \|f\|_{{\mathcal B}_p^{2-\frac{2}{p}}(\Om)}
             + \|g\|_{{\mathcal B }^{2-\frac{1}{p},1-\frac{1}{2p}}_p(\pa \Om  \times (0,T))}
            +  \| g_\nu\|_{{\mathcal B}_p^{2 -\frac1p,1 }( \pa \Om \times (0,T))} \Big).
 \end{array}
\end{eqnarray}
Here, $g_\nu$ denotes the component of $g$ in the direction of the unit outward normal vector $\nu$.
The anisotropic Besov spaces ${\mathcal B}^{\al,\frac12\al }_p (\pa \Om
\times (0,T))$, ${\mathcal B}^{\al,\frac12\al + \frac1{2p} }_p (\pa \Om
\times (0,T))$, and other function spaces are introduced in section \ref{function spaces}.
By making use of the estimates of  the Green matrix in ${\mathbb R}^n_+$, it is shown that the solution  of
the non-stationary Stokes system \eqref{maineq} in  ${\mathbb R}^n_+\times (0,\infty)$
satisfies the estimate
\begin{align*}
&\|D^2_x u\|_{L^p ({\mathbb R}_+ \times (0,\infty)}  + \|D_t  u\|_{L^p ({\mathbb R}_+ \times (0,\infty ))}\\
&\leq c\Big(\|f\|_{{\mathcal B}^{2-\frac{2}{p}}({\mathbb R}^n_+ )}
    +\|g\|_{{\mathcal B}^{2-\frac{1}{p},1-\frac{1}{2p}}_p(\Rn  \times (0,\infty))}
    + \|g_n\|_{{\mathcal B}^{2-\frac{1}{p},1 }_p(\Rn  \times (0,\infty) )} \Big),
\end{align*}
and then, using  flatness near the boundary, the estimate \eqref{So} has been obtained for the solution of the Stokes system
\eqref{maineq} in a smooth bounded cylinder  $\Omega\times (0,T)$.

The aim of  this paper is to extend the result of Solonnikov\cite{So} to data in Besov spaces
$(f,g)\in B^{\alpha -\frac{1}{p}}_p(\Om) \times B^{\al,\frac{\al}{2} }_p(\pa \Om \times (0,T))$, $0<\al< 1 $,
which is weaker   than ${\mathcal B}^{2-\frac{2}{p}}_p(\Omega)\times {\mathcal B}^{2-\frac{1}{p},1-\frac{1}{2p}}_p(\partial \Omega \times (0,T))$.

The system \eqref{maineq} can be decomposed into  the following two systems:
\begin{align}\label{maineq1}
\begin{array}{ll}\vspace{2mm}
v_t - \De v + \na \pi =0 &  \mbox{ in } \Omega \times (0,T),\\ \vspace{2mm}
div \, v =0 & \mbox{ in } \Omega\times (0,T),\\ \vspace{2mm}
v|_{t=0}= f &  \mbox{ in } \Omega, \\
v|_{\pa \Om \times (0,T)}= 0 &  \mbox{ on }\partial\Omega \times (0,T).
\end{array}
\end{align}
and
\begin{align}\label{maineq2}
\begin{array}{ll}\vspace{2mm}
u_t - \De u + \na p =0 &  \mbox{ in } \Omega \times (0,T),\\ \vspace{2mm}
div \, u =0 &  \mbox{ in } \Omega\times (0,T),\\ \vspace{2mm}
u|_{t=0} = 0 &  \mbox{ in } \Omega, \\
u|_{\pa \Om \times (0,T)}= g   & \mbox{ on }\partial\Omega \times (0,T).
\end{array}
\end{align}
The system \eqref{maineq1} has been studied in various function
spaces as a basis of the study of Navier-Stokes system. Indeed,
there are many results on an initial boundary value problem of a
non-stationary Stokes system with no slip boundary
condition (see  \cite{aman}, \cite{giga}, \cite{kato},
\cite{wiegner} and references therein).

In this paper, we consider the solvability of \eqref{maineq2}. The solvability of Stokes system \eqref{maineq2} can lead directly
to the solvability of the original Stokes system \eqref{maineq}.
Our result on the system \eqref{maineq2} could be applied for the study of a  boundary value problem of a non-stationary
Navier-Stokes system with non-zero boundary data. The following is our main result.
\begin{theo}\label{B-1}
Let $1< p<\infty$ and  $0<\alpha<1 $.
Let  $g= (g', g_n) = (g_1,\cdots, g_{n-1}, g_n) \in {\mathcal B}^{\al,\frac12\al }_p ({\mathbb R}^{n-1}
\times (0,T))$ with $g_n \in  {\mathcal B}^{\al,\frac12\al  + \frac1{2p}}_p ({\mathbb R}^{n-1}
\times (0,T))$. Let us also assume $g(x',0) =0$ for $x' \in \Rn$ when $\al > \frac2p$.
Then there is a solution of the Stokes system \eqref{maineq2}, $\Om$ is replaced by ${\mathbb R}^n_+$,  with boundary data $g$ such that
\begin{equation}\label{b-1}
\| u\|_{{\mathcal B}^{\al+\frac{1}{p}, \frac12\al+\frac{1}{2p} }_{p}({\mathbb R}^n_+ \times (0,T))}
 \leq c(T) \Big(\|g\|_{{\mathcal B}^{\al,\frac\al 2}_{p}({\mathbb R}^{n-1} \times (0,T))}
            +  \|g_n\|_{{\mathcal B}_p^{\al,\frac\al2+\frac{1}{2p} }( {\mathbb R}^{n-1} \times (0,T))} \Big).
 \end{equation}
\end{theo}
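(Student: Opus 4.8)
The plan is to construct the solution explicitly via the solution formula for the half-space Stokes system and then estimate it in the anisotropic Besov norm. First I would reduce to the boundary value problem using the known Green matrix / Poisson-type kernel for $\eqref{maineq2}$ in $\mathbb R^n_+$: after taking the Fourier transform in the tangential variables $x'\in\mathbb R^{n-1}$ and the Laplace transform (or Fourier transform) in $t$, the system becomes an ODE in $x_n$ with the divergence-free constraint and the Dirichlet data $g$. Solving that ODE yields $u$ as a convolution of $g$ (and separately $g_n$, which enters through the pressure and the normal component) against kernels $K_{ij}(x',x_n,t)$ that are homogeneous of the parabolic degree dictated by the scaling $x'\sim\sqrt t$, $x_n\sim\sqrt t$. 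The subtle point already visible in the statement is that $g_n$ is required to lie in the better space ${\mathcal B}^{\al,\frac\al2+\frac1{2p}}_p$, reflecting that the normal component is coupled to the pressure and hence effectively differentiated once more in time relative to the tangential components; the kernel acting on $g_n$ is correspondingly one parabolic degree more singular.

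Next I would carry out the estimate $\eqref{b-1}$ by splitting $u=v+w$, where $v$ is the contribution of $g'$ and $w$ the contribution of $g_n$, and estimating each in ${\mathcal B}^{\al+\frac1p,\frac\al2+\frac1{2p}}_p(\mathbb R^n_+\times(0,T))$. For each piece the strategy is: (i) bound the trace on $\{x_n=0\}$, which should reproduce the data norms on $\mathbb R^{n-1}\times(0,T)$ by the mapping properties of the boundary operator; (ii) bound the interior Besov norm by a trace theorem / extension argument — since the kernels are smooth away from the singularity and have the right parabolic homogeneity, the map $g\mapsto u$ should gain exactly $\frac1p$ derivative in $x$ and $\frac1{2p}$ in $t$ over $g$ on the boundary, matching the jump from $\al$ to $\al+\frac1p$ and from $\frac\al2$ to $\frac\al2+\frac1{2p}$. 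Concretely I expect to use the characterization of the anisotropic Besov spaces by differences (or by a Littlewood–Paley / dyadic decomposition in the joint variable) from the preliminary section, estimate the $L^p$ norm of finite differences of $u$ in terms of those of $g$ via Young's inequality against the $L^1$-in-$(x',t)$, homogeneous kernels, and then sum the dyadic pieces. The compatibility hypothesis $g(x',0)=0$ when $\al>\frac2p$ is exactly what is needed for the time-zero trace of $u$ to vanish and for the differences near $t=0$ to be controlled, so that the extension by zero of $g$ to $t<0$ stays in the same Besov class.

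The main obstacle, I expect, is handling the coupling term involving $g_n$ and the pressure near the boundary. The pressure $p$ solves, heuristically, a Neumann-type problem with data determined by $g_n$, and the worst kernel in the representation of $u$ (the one built from $\pa_{x_n}$ of the pressure Poisson kernel composed with the heat semigroup) is borderline: it is homogeneous of degree $-n$ in the parabolic variables, so its convolution against $g_n$ is a singular-integral-type operator in $(x',t)$ rather than a nice $L^1$ kernel, and one cannot simply use Young's inequality. To control it I would either (a) exploit the extra $\frac1{2p}$ regularity assumed on $g_n$ to absorb the loss — writing the operator as (fractional time derivative of order $\frac1{2p}$) composed with a better kernel — or (b) use the boundedness of the relevant singular integral on the anisotropic Besov space directly (a Besov-space version of a Calderón–Zygmund / parabolic-Mikhlin multiplier theorem), which is presumably one of the technical lemmas prepared in the function-space section. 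A secondary technical point is the dependence of $c(T)$ on $T$: for $T<\infty$ one restricts the $T=\infty$ estimate and controls the lower-order contributions, using again the vanishing initial trace; for $T=\infty$ the homogeneity of the kernels gives the scale-invariant bound directly. Once these kernel estimates are in place, assembling $\eqref{b-1}$ is routine bookkeeping with the norm equivalences from the preliminary section.
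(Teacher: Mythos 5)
Your overall architecture (explicit kernel representation, splitting the tangential and normal contributions, exploiting the extra $\tfrac1{2p}$ regularity of $g_n$) points in the same general direction as the paper, and your treatment of $g_n$ is in fact close to what the paper does: it writes $g=(Rg_n,g_n)+(g'-Rg_n,0)$ with $R$ the Riesz transform, solves the first part exactly by $(\nabla\phi,-\phi_t)$ with $\phi$ a harmonic-extension potential of $g_n$, and invokes singular-integral boundedness (Proposition \ref{prop1}) — so your option (b) is essentially the route taken, packaged so that only data of the form $(g',0)$ remains. That reduction is clean and you could adopt it.

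The genuine gap is in your step (ii), the interior estimate. You assert that because the kernels are parabolically homogeneous, convolving $g$ against them "gains exactly $\frac1p$ derivative in $x$ and $\frac1{2p}$ in $t$," and you propose to prove this by estimating finite differences of $u$ via Young's inequality in $(x',t)$ and summing dyadic pieces. But differences in $(x',t)$ and Young's inequality can at best preserve the regularity of $g$; they say nothing about integrability in the new variable $x_n$, which is where the gain of $\frac1p$ actually comes from (the solution is measured in $n+1$ variables, the data in $n$). Some mechanism converting decay of $D^{k_0}_{x'}D^{l_0}_{x_n}D^{m_0}_t u$ as $x_n\to0$ into membership in ${\mathcal B}^{\al+\frac1p,\frac\al2+\frac1{2p}}_p(\mathbb R^n_+\times(0,T))$ is indispensable and is absent from your plan. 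In the paper this is supplied by two ingredients you do not have: (1) the embedding lemmas (Lemmas \ref{trace} and \ref{trace2}) showing that weighted integrals of the form $\int (x_n\wedge t^{1/2})^{p-p\al-1}(|D_xu|^p+\cdots)\,dx\,dt$ dominate the anisotropic Besov norm, obtained from the real-interpolation ($K$-functional) description with the explicit competitor $f(s)=u(x',x_n+s,t+s^2)$; and (2) the atomic decomposition of ${\mathcal B}^{\al,\frac\al2}_p$ together with the painstaking pointwise kernel estimates of Section \ref{pointwise}, which reduce the weighted bounds to integrals one can compute for a single atom. A Littlewood--Paley/multiplier route could conceivably replace (2), but you would still need an analogue of (1) — or a direct Fourier characterization of the half-space anisotropic Besov norm — before the "routine bookkeeping" at the end can even begin. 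As written, the proposal does not close.
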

This result is optimal in the sense that the restrictions  over
${\mathbb R}^{n-1} \times  {\mathbb R}$ of the functions contained
in  ${\mathcal B}^{\al+\frac{1}{p}, \frac12\al +\frac{1}{2p}
}_{p}({\mathbb R}^n \times  {\mathbb R})$
 are in  ${\mathcal B}^{\al, \frac12\al  }_{p}({\mathbb R}^{n-1} \times  {\mathbb R})$ and
 $\| u|_{ {\mathcal B}^{\al, \frac12\al  }_{p}({\mathbb R}^{n-1} \times  {\mathbb R})  }
 \leq c \| u\|_{ {\mathcal B}^{\al+\frac{1}{p}, \frac12\al +\frac{1}{2p}  }_{p}({\mathbb R}^n \times  {\mathbb R})    }      $ (see \cite{C}).

 To obtain   estimates of solutions of system \eqref{maineq2} in a smooth bounded cylinder $\Omega \times (0,T)$, we
 flatten the boundary and make use of   estimates for the solutions of the system \eqref{maineq2} in ${\mathbb R}^n_+\times (0,T)$.
 Then we can extend  Theorem \ref{B-1} to the  Stokes flow in any smooth  bounded cylinder.
\begin{coro}\label{M1}
Let  $\Om$ be a bounded $C^2$-domain in $\R$, $1< p<\infty$, and
$0<\alpha<1 $. Let  $g \in {\mathcal B}^{\al,\frac12\al }_p (\pa \Om
\times (0,T))$ with $ g_\nu \in  {\mathcal B}^{\al,\frac12\al
+ \frac1{2p}}_p (\pa \Om \times (0,T))$ and $\int_{\pa \Om} g_\nu(P,t) dP =0$ for all $0<t<T$. Let us also assume $g(P,0) =0$
for $P \in \pa \Om$ when $\al > \frac2p$.  Then there is a solution
of \eqref{maineq2}  with boundary data $g$ such that
\begin{equation}\label{b-1}
\| u\|_{{\mathcal B}^{\al+\frac{1}{p} , \frac12\al+\frac{1}{2p} }_{p}(\Om \times (0,T))}
 \leq c(T) \Big(\|g\|_{{\mathcal B}^{\al,\frac\al 2}_{p}(\pa \Om \times (0,T))}
            +  \| g_\nu\|_{{\mathcal B}_p^{\al,\frac\al2+\frac{1}{2p} }( \pa \Om \times (0,T))} \Big).
 \end{equation}
\end{coro}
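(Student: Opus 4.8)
The plan is to reduce Corollary \ref{M1} to Theorem \ref{B-1} by the localize-and-flatten device announced in the Introduction: cover $\pa\Om$ by finitely many patches in which the boundary is straightened, solve a half-space Stokes problem in each patch by Theorem \ref{B-1}, glue the pieces, and remove the mismatch by a perturbation argument. The only genuinely delicate point is that the chart maps do not intertwine the Stokes operator, so the glued field will solve \eqref{maineq2} only up to an error of \emph{critical} order -- Theorem \ref{B-1} gains merely $\al+\frac1p<2$ spatial derivatives -- and the compatibility condition $\int_{\pa\Om}g_\nu\,dP=0$ must be used to dispose of the divergence part of that error.

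\textbf{Localization and transplantation.} I would fix open sets $U_1,\dots,U_N$ covering a neighborhood of $\pa\Om$ and $C^2$ diffeomorphisms $\Phi_k\colon U_k\to B$ straightening $\pa\Om\cap U_k$ onto a piece of $\{y_n=0\}$ and carrying $\Om\cap U_k$ into $\{y_n>0\}$; a smooth partition of unity $\{\psi_k\}$ with $\sum_k\psi_k\equiv1$ near $\pa\Om$ and $\mathrm{supp}\,\psi_k\subset U_k$; and cut-offs $\chi_k\in C_c^\infty(U_k)$ with $\chi_k\equiv1$ on $\mathrm{supp}\,\psi_k$. Shrinking the $U_k$ -- using only that $\Phi_k\in C^2$, so $D\Phi_k$ is continuous -- I may assume the coefficients of the push-forwards of $\De$ and of $\mathrm{div}$ under $\Phi_k$ differ from their constant-coefficient Euclidean forms by at most $\ep_0$ in $L^\infty(U_k)$, with $\ep_0$ as small as wanted. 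For each $k$ I push the vector field $\psi_k g$ forward by $\Phi_k$ (Jacobian factor included) and extend by zero, getting $\ti g_k$ on $\Rn\times(0,T)$. Since $\Phi_k$ is $C^2$ and acts only in space, $\ti g_k\in{\mathcal B}^{\al,\frac12\al}_p(\Rn\times(0,T))$; a straightening map sends the outward normal of $\pa\Om$ to a positive $C^1$-multiple of $e_n$, so the $n$-th component of $\ti g_k$ is, modulo a smooth factor and lower-order contributions of the tangential components, the transplant of $g_\nu$ and hence lies in ${\mathcal B}^{\al,\frac12\al+\frac1{2p}}_p(\Rn\times(0,T))$; and $\ti g_k(x',0)=0$ when $\al>\frac2p$, with
\[
\|\ti g_k\|_{{\mathcal B}^{\al,\frac12\al}_p}+\|(\ti g_k)_n\|_{{\mathcal B}^{\al,\frac12\al+\frac1{2p}}_p}\le c\Big(\|g\|_{{\mathcal B}^{\al,\frac12\al}_p(\pa\Om\times(0,T))}+\|g_\nu\|_{{\mathcal B}^{\al,\frac12\al+\frac1{2p}}_p(\pa\Om\times(0,T))}\Big).
\]
Theorem \ref{B-1} then gives a solution $(w_k,\pi_k)$ of \eqref{maineq2} on ${\mathbb R}^n_+\times(0,T)$ with data $\ti g_k$ and $\|w_k\|_{{\mathcal B}^{\al+\frac1p,\frac12\al+\frac1{2p}}_p}$ bounded by the right-hand side above. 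Pulling $(w_k,\pi_k)$ back by $\Phi_k$, multiplying by $\chi_k$, extending by zero and summing, I set $v:=\sum_{k=1}^N\chi_k u_k$ (with $u_k$ the $\Phi_k$-pull-back of $w_k$) and $q$ the analogous sum of the pulled-back pressures.

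\textbf{The residual equation and the role of compatibility.} Because $\chi_k\equiv1$ on $\mathrm{supp}\,\psi_k$ one checks $v|_{\pa\Om\times(0,T)}=\sum_k\psi_k g=g$ and $v|_{t=0}=0$, while $v$ solves $v_t-\De v+\na q=F$, $\mathrm{div}\,v=G$ in $\Om\times(0,T)$, where $F,G$ are finite sums of: (i) commutator terms in which $\na\chi_k$ or $\De\chi_k$ meets $u_k$ (at most one derivative of $w_k$, supported in $U_k\setminus\mathrm{supp}\,\psi_k$), and (ii) chart-distortion terms $\sum(a^k_{ij}-\de_{ij})\pa_i\pa_j w_k$ and $\sum(b^k_j-\de_j)\pa_j w_k$ (pulled back), of full order but multiplied by the small factor $\ep_0$; rewriting the first of these in divergence form $\pa_i[(a^k_{ij}-\de_{ij})\pa_j w_k]-(\pa_i a^k_{ij})\pa_j w_k$ leaves only one derivative of $w_k$ inside. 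By the divergence theorem, $\int_\Om G(\cdot,t)\,dx=\int_{\pa\Om}v\cdot\nu(\cdot,t)\,dP=\int_{\pa\Om}g_\nu(\cdot,t)\,dP=0$ for a.e.\ $t$ -- which is exactly the solvability condition for the divergence.

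\textbf{Removing the residual.} I would close the argument by a fixed point in $X:={\mathcal B}^{\al+\frac1p,\frac12\al+\frac1{2p}}_p(\Om\times(0,T))$. Given a residual $(F,G)$ with $\int_\Om G=0$, apply the Bogovskii operator on $\Om$ to get $z$ with $\mathrm{div}\,z=-G$, $z|_{\pa\Om}=0$, $z|_{t=0}=0$ and a one-derivative gain (also in the anisotropic scaling), then solve the no-slip Stokes system \eqref{maineq1} with forcing $F-(z_t-\De z)$ by the known maximal-regularity theory for \eqref{maineq1} (see \cite{aman},\cite{giga}), and add $z$ and this correction to $v$. Since each term of $F,G$ is either one order below the top or carries the factor $\ep_0$, this defines on $X$ a contraction -- once the charts are taken small enough -- plus a strictly lower-order part absorbed by interpolation against the $X$-norm; a Neumann series (equivalently, a continuity argument) then produces $u$ solving \eqref{maineq2}, and summing the chart estimates yields \eqref{b-1}. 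I expect this last step to be the main obstacle: since Theorem \ref{B-1} gains fewer than two spatial derivatives, the distortion term (ii) is of critical order, so -- unlike in Solonnikov's \cite{So} $W^{2,1}_p$ framework -- it cannot be absorbed by compactness, and one must instead lean on the $L^\infty$-smallness of $a^k_{ij}-\de_{ij}$ together with a product (paraproduct) estimate valid in the negative-order anisotropic Besov spaces in which $\pa w_k$ lives, and make sure that the Bogovskii and no-slip-Stokes solution operators act with the correct gain on such low-regularity right-hand sides.
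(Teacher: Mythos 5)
The paper contains no proof of Corollary \ref{M1}: it is stated in the introduction with only the one--sentence remark that one ``flattens the boundary and makes use of the estimates for the solutions in ${\mathbb R}^n_+\times(0,T)$,'' and Section \ref{proofmain} proves only the half-space result. So there is no argument of the authors to compare yours against; your localize--flatten--perturb scheme is the natural implementation of what the introduction merely asserts, and your observation that the $n$-th component of the transplanted data is a smooth positive multiple of $g_\nu$ (via $(D\Phi_k)^T e_n=\na(\Phi_k)_n\parallel\nu$, provided the velocity is transplanted by $D\Phi_k$ or the Piola transform) correctly explains why the two boundary norms in \eqref{b-1} match up after flattening.

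As a proof, however, the proposal is not closed, and you say so yourself. Two steps are genuinely missing. First, the chart-distortion term $(a^k_{ij}-\de_{ij})\pa_i\pa_j w_k$ is of \emph{critical} order relative to the regularity $\al+\frac1p<2$ delivered by Theorem \ref{B-1}: $\pa_i\pa_j w_k$ lives only in a negative-order anisotropic Besov space of order $\al+\frac1p-2$, which can be below $-1$, so multiplication by the merely $C^1$ coefficients $a^k_{ij}$ and the required smallness/product (paraproduct) estimates are not available from anything in the paper; likewise the maximal-regularity solvability of the no-slip problem \eqref{maineq1} with forcing at this negative order is asserted, not established (the cited references treat $L^p$ or positive-regularity data). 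Second, the divergence correction is the classical sore point of localizing the Stokes system: after setting $z=\mathrm{Bog}(-G)$ you must feed $z_t-\De z$ back as a forcing term, and $z_t=\mathrm{Bog}(-G_t)$ involves $\pa_t\na w_k$, of parabolic order $3$ against the available $\al+\frac1p<2$; the formal order count after the one-derivative Bogovskii gain lands exactly at the critical forcing order, so this again requires boundedness of the Bogovskii operator on negative-order anisotropic spaces, which is not supplied. Both gaps would need to be filled for the corollary to be proved; since the paper itself provides nothing beyond the assertion, the deficit is as much the paper's as yours, but the proposal as written does not yet constitute a proof.
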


S.  Hofmann,  K. Nystr$\ddot{\rm o}$m\cite{HN}, and Z. Shen\cite{Sh}   have  also considered  Stokes system
\eqref{maineq2} in  ${\mathbb R}^n_+ \times {\mathbb R} $ and
 bounded Lipschitz cylinder  $\Om \times (0,T)$, respectively.
By single layer and double layer potentials of the  Stokes system,  S.  Hofmann and  K. Nystr$\ddot{\rm o}$m\cite{HN} showed
\begin{align*}
 & \int_{ {\mathbb R}^n_+  \times {\mathbb R} } x_n |D_x u|^2   dxdt
\leq c   \| g\|^2_{L^2 ( {\mathbb R}^{n-1} \times {\mathbb R}    )},\\
& \int_{ {\mathbb R}^n_+  \times {\mathbb R} } \Big( x_n |D^2_x u|^2    + x_n |D_t u|^2 \Big)dxdt
\leq c \Big( \| g\|^2_{{\mathcal B}^{1, \frac12}_2 ( {\mathbb R}^{n-1} \times {\mathbb R} )}
  + \int_{{\mathbb R}} \| <\frac{\pa g}{\pa t}, {\bf n}>\|^2_{{\mathcal B}^{-1}_2({\mathbb R}^{n-1} ) }  \Big).
  \end{align*}
 Their results are compared with our results, that is, theorem \ref{Rn} in section \ref{anisotropicsection}.

When $\Om$ is bounded Lipschitz cylinder in ${\mathbb R}^{n+1}$, Z. Shen\cite{Sh}  showed
 \begin{align}\label{Sh}
 \| u^*\|_{L^2(\Om \times (0,T))}
\leq c(T)   \| g\|_{L^2 (\pa \Om \times (0,T))}
 \end{align}
 and
 \begin{align}\label{Sh1}
\notag
\| (D_x u)^*\|_{L^2 ( \pa \Om\times (0,T))} + \| (D^\frac12_t u)^* \|_{L^2( \pa \Om \times (0,T))}
+ \| u^*\|_{L^2( \pa \Om \times (0,T))}\\
\leq c(T) \Big( \| g\|_{{\mathcal B}^{1, \frac12}_2 (\pa \Om \times (0,T))}
  + (\int_0^T\| <\frac{\pa g}{\pa t}, {\bf n}>\|^2_{{\mathcal B}^{-1}_2(\pa \Om) } )^\frac12\Big).
  \end{align}
Here $u^*$ denotes the non-tangential limit of $u$.

We are not sure that the solutions satisfying \eqref{Sh} and
\eqref{Sh1} are in Besov spaces ${\mathcal B}_2^{\frac{1}{2},
\frac{1}{4}}(\Omega\times (0,T))$ and ${\mathcal B}_2^{\frac{3}{2},
\frac{3}{4}}(\Omega\times (0,T))$, respectively, since   for  the
non-stationary Stokes system it is still open problem whether  the
$L^2$ norm of a non-tangential limit of the solution is equivalent
to an area integral of  the solution.

On the other hand, for the solutions of a elliptic equation, a parabolic equation, and the stationary Stokes system it is well known that the $L^2$
norms of a non-tangential limit of the solutions are equivalent to the area integrals of the solutions
  (see \cite{DJK}, \cite{DKPV} on the  elliptic equation, see \cite{B} on the  parabolic  equation,
  and see \cite{BS} for the stationary Stokes system).

We organized  the paper in the following way.
In section \ref{function spaces}, we introduce the anisotropic function spaces.
In section \ref{kernal}, we see that  without loss of generality we can assume the boundary data  $g_n=0$ and then represent the solution of
stokes system in $\R_+ \times (0,\infty)$  by some integral formula.
In section \ref{preliminary} we  study the embedding properties of the functions in weighted Sobolev spaces into  anisotropic spaces and
we introduce on the atomic decomposition of the functions in anisotropic spaces.
In section \ref{pointwise}, we derive pointwise estimates of the solution of the Stokes system \eqref{maineq2} in $\R_+ \times (0,\infty)$
when  the boundary data $g=(g',0)$ is given by an atom.
In section  \ref{anisotropicsection}, we show that the solution of the  Stokes system \eqref{maineq2} in $\R_+ \times (0,\infty)$ is in some
weighted Sobolev spaces in $\R_+ \times (0,\infty)$  using the estimates of section \ref{pointwise} when the boundary data is in the proper
anisotropic space. In section \ref{proofmain} we derive the estimates as in Theorem \ref{B-1} for the boundary data $g=(g',0)$.
Theorem \ref{B-1} for any boundary data $g=(g',g_n)$ will be proved combining  the result of Theorem \ref{B-2} in section  \ref{proofmain}
\and Proposition \ref{prop1} in section \ref{kernal}.

\section{Besov spaces and Anisotropic Besov spaces}

\label{function spaces}
\setcounter{equation}{0}

We denote $x  = (x',x_n) \in {\mathbb R}^n_+$ for $x'\in \Rn $ and denote
$D^{l_0}_{x_n} D^{k_0}_x D^{m_0}_t = \frac{\pa^{l_0} }{\pa x_n} \frac{\pa^{|k_0|}}{\pa x^{k_0}} \frac{\pa^{m_0} }{\pa t}$ for multi index
$l_0, k_0, m_0$. Throughout this paper we denote by $c$ various generic constants and by $c(*,\cdots,*)$
the constants depending only on the quantities appearing in the subindex.

Let $\Om $ be ${\mathbb R}^n_+ $  or a bounded domain.
 For   $1 \leq p \leq \infty$ and $0 < \al $,  the Besov space ${\mathcal B}^\al_p(\Om)$   is  set of functions  satisfying
\begin{align*}
\| f\|^p_{{\mathcal B}^\al_p(\Om)} = \| f\|_{W_p^{[\al]}(\Om)}^p + \sum_{|k |= [\al]} \int_{\Om}\int_{\Om}
\frac{|D^k_x f(x) - D^k_y f(y)|^p}{|x-y|^{n +p(\al - |k|)}} dxdy < \infty, \quad 1 \leq p < \infty,\\
\| f\|_{{\mathcal B}^\al_\infty(\Om)} = \sup_{x \in \Om} |D^{[\al]}_x u(x)| + \sup_{|k| =[\al]}
\sum_{x,y \in \Om}\frac{|D^k_xu(x) - D^k_y u(y)|}{|x-y|^{\al -|k|}} < \infty, \quad p =\infty.
\end{align*}
Here $[\al]$ denotes the largest integer less than $\al$ and $W_p^{[\al] }(\Om)$ is the usual Sobolev space in $\Om$.

For interval  such as $I =(0,T)$,  $(0, \infty)$  or ${\mathbb R}$,  $ {\mathcal B}^\al_p(I )$ is  defined similarly for $1\leq p\leq \infty$ and $\al>0$.

For $0 < \al <2$ and $0<\be < 2$, we define the  anisotropic Besov spaces  ${\mathcal B}^{\al,\be}_p (\Om \times I
)$ by the Banach spaces
\begin{align}\label{p-besov}
{\mathcal
B}^{\al,\be}_p (\Om \times I  ) = L^p(I; {\mathcal
B}^{\al }_p (\Om)) \cap L^p(\Om; {\mathcal
B}^{\be }_p (I))
\end{align}
with norm
\begin{align*}
\| u\|^p_{{\mathcal B}^{\al,\be}_p
(\Om \times I )} &:= \int_{I } \|
u(\cdot, t)\|_{{\mathcal B}^\al_p (\Om)}^p dt
+ \int_{\Om} \| u(x,\cdot)\|^p_{{\mathcal B}^{\be}_p (I )} dx\mbox{ for } 1 \leq p < \infty,\\
\| u\|_{{\mathcal B}^{\al,\be}_\infty
(\Om \times I   )} & := \sup_{t
\in I } \| u(\cdot, t)\|_{{\mathcal B}^\al_\infty (\Om)} +
\sup_{ x \in \Om} \| u(x,\cdot)\|_{{\mathcal
B}^{\be}_\infty (I )}.
\end{align*}
It is well known theory that the usual Besov spaces are real
interpolation spaces of Sobolev spaces:
\begin{align*}
{\mathcal B}^\al_p(\Om) &= \left\{\begin{array}{ll} \vspace{2mm}
(W^1_p (\Om ), L^p(\Om))_{1-\al,p}&\mbox{if } 0 < \al <1, \\
(W^2_p(\Om), W^1_p (\Om))_{2-\al,p}& \mbox{if } 1 < \al <2.
\end{array}
\right.\\
{\mathcal B}^{\frac12\al}_p(I ) & =
(W^1_p (I ), L^p(I ))_{1-\frac12 \al,p}\mbox{ if } 0 < \al <2,
\end{align*}
for $1\leq p\leq \infty$ (see Proposition 2.17 in \cite{JK}). It is
also well-known that for $1 \leq p < \infty$
\begin{align}\label{interpolation}
\begin{array}{ll}
L^p(I; {\mathcal B}^{\al}_p(\Om)) &= \left\{\begin{array}{ll} \vspace{2mm}
(L^p(I; W^{1}_p (\Om)),L^p(I; L^p (\Om)))_{1-\al,p} & \mbox{if} \quad 0 < \al < 1,\\
(L^p(I; W^{2}_p (\Om)),L^p(I; w^1_p (\Om)))_{2-\al,p} & \mbox{if} \quad 1 < \al < 2
\end{array}
\right.
\\
L^p(\Om; {\mathcal B}^{\frac12 \al}_p(I)) &= (L^p(\Om; W^{1}_p (I)),L^p(\Om; L^p (I )))_{1-\frac12 \al,p}
\end{array}
\end{align}
(see Comment 5.8.6 in  \cite{BL}).

\section{Solution formula of the Stokes system in ${\mathbb R}^n_+ \times (0,\infty)$}
\label{kernal}
\setcounter{equation}{0}

Let $g \in C^\infty_c(\Rn \times(0,\infty))$.
We decompose $g$ by $g = g^1 + g^2 =  (R g_n, g_n) + (g' - R g_n, 0 ) $, where $R = (R_1, R_2, \cdots , R_{n-1})$ is Riesz transform. Let
$$
\phi(x,t)=-\om_n \int_{{\mathbb R}^{n-1} } E(x'-y',x_n)g_n(y',t)dy',
$$
where $E$ is a fundamental solution of Laplace equation.
Then, $(\na \phi, -\phi_t)$ satisfies the Stokes system \eqref{maineq2} for $\Om = {\mathbb R}^n_+$ with boundary data $g^1$.
The following estimate is  well  known property of the singular  integral operator (see \cite{St}).
\begin{prop}
\label{prop1}
Let $1 < p < \infty$ and $\al >0$. Then there is a positive constant $c$ such that
\begin{align*}
\| \nabla\phi\|_{{\mathcal B}^{\al+\frac{1}{p}, \frac\al 2+\frac{1}{2p} }_p(\R_+\times (0, T))}
\leq c(T)\| g_n\|_{{\mathcal B}_p^{\al, \frac\al2+\frac{1}{2p}}(\Rn \times (0, T))}
\end{align*}
for $0 < T < \infty$.
\end{prop}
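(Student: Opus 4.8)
The plan is to reduce the claimed Besov estimate to the classical mapping property of the Riesz transform together with the smoothing effect of convolution against the Poisson kernel for the half-space. Recall that $\phi(x,t)=-\omega_n\int_{\mathbb R^{n-1}}E(x'-y',x_n)g_n(y',t)\,dy'$; thus, for fixed $t$, $\phi(\cdot,t)$ is the harmonic extension of a constant multiple of $g_n(\cdot,t)$ to $\mathbb R^n_+$, and each first-order derivative $\partial_{x_j}\phi(\cdot,t)$ is obtained from $g_n(\cdot,t)$ by convolution with a Calderón--Zygmund-type kernel homogeneous of degree $-n$ in the half-space variable. The key observation is that the time variable enters only as a parameter in the spatial convolution, so the operator $g_n\mapsto\nabla\phi$ commutes with differences and dilations in $t$. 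This lets me treat the two defining ingredients of the anisotropic norm in \eqref{p-besov} separately.

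First I would handle the spatial part $\int_0^T\|\nabla\phi(\cdot,t)\|_{{\mathcal B}^{\al+\frac1p}_p(\mathbb R^n_+)}^p\,dt$. For each fixed $t$, the trace theorem / extension theorem for the harmonic (Poisson) extension gives $\|\nabla\phi(\cdot,t)\|_{{\mathcal B}^{\al+\frac1p}_p(\mathbb R^n_+)}\le c\,\|g_n(\cdot,t)\|_{{\mathcal B}^{\al+\frac1p-\frac1p}_p(\mathbb R^{n-1})}=c\,\|g_n(\cdot,t)\|_{{\mathcal B}^{\al}_p(\mathbb R^{n-1})}$; the gain of $\frac1p$ in smoothness when passing from the boundary trace to the interior is exactly the standard Besov trace relation $B^{s}_p(\mathbb R^{n-1})\hookrightarrow$ trace space of $B^{s+1/p}_p(\mathbb R^n_+)$, here used in the reverse (extension) direction, and it is a continuous operation. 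Raising to the $p$-th power and integrating in $t$ over $(0,T)$ gives $\int_0^T\|\nabla\phi(\cdot,t)\|_{{\mathcal B}^{\al+\frac1p}_p}^p\,dt\le c\int_0^T\|g_n(\cdot,t)\|_{{\mathcal B}^{\al}_p(\mathbb R^{n-1})}^p\,dt$, which is bounded by $c\,\|g_n\|_{{\mathcal B}^{\al,\frac\al2+\frac1{2p}}_p(\mathbb R^{n-1}\times(0,T))}^p$ since that anisotropic norm dominates its $L^p(\mathbb R^{n-1};\cdot)$-in-space piece; actually it suffices that it dominates the $L^p(0,T;{\mathcal B}^\al_p)$ piece, which is immediate from \eqref{p-besov}.

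Next I would handle the temporal part $\int_{\mathbb R^n_+}\|\nabla\phi(x,\cdot)\|_{{\mathcal B}^{\frac\al2+\frac1{2p}}_p(0,T)}^p\,dx$. Here the crucial point is that for each fixed $x$ the map $t\mapsto\nabla\phi(x,t)$ is a spatial convolution of $t\mapsto g_n(\cdot,t)$ with a fixed ($t$-independent) kernel $K_x(\cdot)=\nabla_xE(\cdot-\,\cdot\,,x_n)$ of $L^1$-type once one is at a positive distance $x_n>0$; more robustly, one uses that the full operator $g_n\mapsto\nabla\phi$, being a Riesz-transform/Poisson composition acting purely in $x'$, extends to a bounded operator on $L^p(\mathbb R^n_+;X)$ for any Banach space $X$ of distributions in $t$ on which difference quotients are controlled, by the vector-valued Calderón--Zygmund theory of \cite{St}. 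Taking $X={\mathcal B}^{\frac\al2+\frac1{2p}}_p(0,T)$ and combining with the Fubini-type identity $\int_{\mathbb R^n_+}\|\,\cdot\,(x,\cdot)\|_X^p\,dx=\|\,\cdot\,\|_{L^p(\mathbb R^n_+;X)}^p$, one gets $\int_{\mathbb R^n_+}\|\nabla\phi(x,\cdot)\|_{{\mathcal B}^{\frac\al2+\frac1{2p}}_p(0,T)}^p\,dx\le c\,\|g_n\|_{L^p(\mathbb R^{n-1};{\mathcal B}^{\frac\al2+\frac1{2p}}_p(0,T))}^p$. Since the trace/extension in $x_n$ only improves integrability (the boundary values live on $x_n=0$ and the extension is bounded $\mathbb R^{n-1}\to\mathbb R^n_+$ in the relevant mixed-norm), the right-hand side is again controlled by $\|g_n\|_{{\mathcal B}^{\al,\frac\al2+\frac1{2p}}_p(\mathbb R^{n-1}\times(0,T))}^p$. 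Adding the two pieces yields \eqref{b-1} with a constant $c(T)$; the dependence on $T$ comes only from the finite-interval versions of the interpolation identities in \eqref{interpolation} and the extension operators, exactly as in \cite{JK,BL}.

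The main obstacle, and the step deserving the most care, is the rigorous vector-valued singular-integral estimate: one must verify that composing the Riesz transforms $R=(R_1,\dots,R_{n-1})$ acting in $x'$ with the Poisson extension in $x_n$ produces an operator bounded on $L^p$ of the half-space with values in the anisotropic Besov space in $t$, \emph{uniformly} in the smoothness exponents, and that this commutes with taking the $\al$-order spatial difference quotients used to define ${\mathcal B}^{\al+1/p}_p(\mathbb R^n_+)$. The cleanest route is interpolation: prove the endpoint bounds for integer spatial smoothness (i.e. $L^p(0,T;W^1_p)$ and $L^p(0,T;W^2_p)$, and the corresponding $L^p(\mathbb R^n_+;W^1_p(0,T))$ estimates) using ordinary Calderón--Zygmund theory for $\nabla\phi$ and its derivatives, and then interpolate using \eqref{interpolation} to land in the fractional anisotropic scale; the case distinction $\al+\frac1p\lessgtr1$ in \eqref{interpolation} is handled by choosing the interpolation couple accordingly. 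I would also need the harmless remark that $\nabla\phi$ solves \eqref{maineq2} with data $g^1=(Rg_n,g_n)$, which is already asserted in the text, so the statement is genuinely about the operator $g_n\mapsto\nabla\phi$ and nothing more.
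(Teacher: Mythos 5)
The paper offers no proof of Proposition \ref{prop1} at all --- it is asserted as a ``well known property of the singular integral operator'' with a bare citation to \cite{St} --- so there is nothing of theirs to compare your argument with step by step. Your treatment of the spatial half of the norm (freeze $t$, observe that $\nabla\phi(\cdot,t)$ is the gradient of a single layer potential, i.e.\ Riesz transforms composed with the Poisson extension, and invoke the $1/p$-gain of the extension operator ${\mathcal B}^{\al}_p({\mathbb R}^{n-1})\to{\mathcal B}^{\al+1/p}_p({\mathbb R}^{n}_+)$, then integrate in $t$) is the standard and essentially correct ingredient, modulo the caveat that on the unbounded half-space the pure harmonic extension does not decay in $x_n$ fast enough to lie in the inhomogeneous space for all $p$ (for $g$ Schwartz with nonzero mean, $\|P_{x_n}*g\|^p_{L^p({\mathbb R}^{n-1})}\sim x_n^{\,p(1-n)+n-1}$, which is not integrable on $(1,\infty)$ when $p\le n/(n-1)$); this low-frequency issue already afflicts the statement itself and deserves at least a remark.

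The genuine gap is in your temporal half. You claim that $g_n\mapsto\nabla\phi$ is bounded from $L^p({\mathbb R}^{n-1};X)$ to $L^p({\mathbb R}^{n}_+;X)$ because ``the extension only improves integrability.'' This is false, and vector-valued Calder\'on--Zygmund theory cannot deliver it, because the operator is not a singular integral on a fixed measure space: it maps functions of $(n-1)$ variables to functions of $n$ variables, and the extra $x_n$-integration must be paid for with spatial smoothness. Concretely, since $\|\cdot\|_{X}\ge\|\cdot\|_{L^p(0,T)}$ for $X={\mathcal B}^{\al/2+1/(2p)}_p(0,T)$, your claimed bound would imply $\|\nabla\phi\|_{L^p({\mathbb R}^n_+\times(0,T))}\le c\,\|g_n\|_{L^p({\mathbb R}^{n-1};X)}$; testing this on $g_n(y',t)=g(\la y')\eta(t)$ and using the scaling identity $\nabla\phi_\la(x)=(\nabla\phi)(\la x)$ shows the two sides scale as $\la^{-n/p}$ and $\la^{-(n-1)/p}$ respectively, so no such inequality holds. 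The same defect appears in the H\"older-in-$t$ seminorm: for the differences $\nabla\phi(x,t)-\nabla\phi(x,s)$ one needs an $L^p({\mathbb R}^n_+)$ bound on $\nabla\Phi[g_n(\cdot,t)-g_n(\cdot,s)]$, which costs roughly $1/p$ of a spatial derivative of the difference --- regularity that the purely temporal seminorm of $g_n$ does not supply. So the second piece of the anisotropic norm cannot be handled one variable at a time; one must genuinely combine the spatial ${\mathcal B}^{\al}_p$ and temporal ${\mathcal B}^{\al/2+1/(2p)}_p$ regularities of $g_n$, e.g.\ via a joint (parabolic) Littlewood--Paley or atomic decomposition in $(x',t)$ in the spirit of Sections \ref{pointwise}--\ref{anisotropicsection}, or via the anisotropic extension theorems of \cite{C}. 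Your proposed fallback of interpolating from integer-smoothness endpoints does not escape this, since the failing $L^p({\mathbb R}^{n-1})\to L^p({\mathbb R}^n_+)$ bound is already present at the endpoint $L^p({\mathbb R}^{n-1};W^1_p(0,T))$.
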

Let $(u,p)$ be the solution of the Stokes system \eqref{maineq2} in $\Om = {\mathbb R}^n_+$  with boundary data $g^2$. Then, $(\na \phi +u, -\phi_t +p)$ satisfies
the Stokes system \eqref{maineq2}  with boundary data $g$.
 Hence, to prove theorem \ref{B-1} we have only to consider the Stokes system \eqref{maineq2} with the boundary data $g^2$. Note that $g^2_n =0$.

From now on, without loss of generality, we assume  $g_n=0$.
  The solution $(u, p)$ of the Stokes system \eqref{maineq2} with boundary data $g$ with $g_n=0$ is represented by
\begin{align}\label{representation}
\begin{array}{ll}\vspace{2mm}
u^i(x,t) &= \sum_{j=1}^{n-1}\int_0^t \int_{\Rn} K_{i,j}( x'-y',x_n,t-s)g_j(y',s) dy'ds,\\
p(x,t) &= \sum_{j=1}^{n-1}\int_0^t \int_{\Rn} \pi_j(x'-y',x_n,t-s) g_j(y',s) dy'ds,
\end{array}
\end{align}
where
\begin{align*}
K_{ij}(x,t)
 &=-2 \delta_{ij}D_{x_n}  \Ga(x,t)
 +4G_{ij} (x,t)
\end{align*}
and
\begin{align*}
\pi_j (x,t)&=-2\delta(t)\frac{\partial^2}{\partial x_j\partial x_n}E(x)+4\frac{\partial^2}{\partial x_n^2}A(x,t)
+4\frac{\partial}{\partial t}\frac{\partial}{\partial x_j}A(x,t),
\end{align*}
where $\Ga$ and $E$ are  the fundamental solutions of the heat equation and Laplace equation, respectively, and
\begin{align*}
 {G}_{ij} (x,t) & =D_{x_j}\int_0^{x_n} \int_{\Rn}  D_{z_n}  \Ga(z,t)  D_{x_i} E(x-z)  dz,\\
 A(x,t)&=\int_{\Rn}\Ga(z',0,t)E(x'-z',x_n)dz'.
\end{align*}

 $G_{ij}$ and $A$ satisfy the estimates
\begin{align}\label{inequality11}
|D^{l_0}_{x_n} D^{k_0}_{x'} D_{t}^{m_0} G_{ij}(x,t)|
& \leq  \frac{c}{t^{m_0 + \frac12} (|x|^2 +t )^{\frac12 n + \frac12 k_0} (x_n^2 +t)^{\frac12 l_0}},\\
|D^j_xD^m_tA(x,t)|&\leq \frac{c}{t^{m+\frac{1}{2}}(|x|^2+t)^{\frac{n-2+|j|}{2}}},
\end{align}
where $ 1 \leq  i \leq n$ and $1 \leq j \leq n-1$ (see \cite{K} and \cite{So}).
The estimates \eqref{inequality11} of $G_{ij}$ and the estimate of Gaussian kernel $\Gamma$ imply that
\begin{align}\label{inequality1}
|D^{l_0}_{x_n} D^{k_0}_{x'} D_{t}^{m_0} K_{ij}(x,t)|
& \leq  \frac{c}{t^{m_0 + \frac12} (|x|^2 +t )^{\frac12 n + \frac12 k_0} (x_n^2 +t)^{\frac12 l_0}}.
\end{align}

\section{Preliminary theories}
\setcounter{equation}{0}
\label{preliminary}
\subsection{Estimates weighted Sobolev spaces in  Anisotropic spaces}
\label{weighted}

\begin{lemm}\label{trace}
Let $0< \al < 1$ and  $1 \leq p < \infty$. Let $u\in C^\infty({\mathbb R}^n_+\times (0,\infty))$.
Then
\begin{align}\label{interpolation2}
\begin{array}{ll}\vspace{2mm}
 \| u\|^p_{{\mathcal B}_p^{\al,\frac12 \al} (\R_+  \times (0,\infty) )}
 & \leq c\int  \int_{\R_+ \times (0,\infty)}
    (x_n \wedge t^\frac12)^{p-p\al} \Big( |D_{x}  u  |^p    + |u   |^p\Big) \\
    &\qquad +   (x_n \wedge t^\frac12)^{2p-p\al}\Big(|u|^p+| D_t u  |^p\Big) d x dt.
\end{array}
\end{align}
Here, $a\wedge b=\min\{a,b\}.$
\end{lemm}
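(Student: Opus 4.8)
The plan is to reduce the anisotropic Besov norm on the left-hand side to the two one-parameter pieces in the definition \eqref{p-besov}, namely $L^p((0,\infty);{\mathcal B}^\al_p(\R_+))$ and $L^p(\R_+;{\mathcal B}^{\frac12\al}_p((0,\infty)))$, and estimate each by a weighted Sobolev seminorm via the interpolation identities \eqref{interpolation}. For the spatial piece, since $0<\al<1$ we have, for a.e.\ fixed $t$, the identity ${\mathcal B}^\al_p(\R_+)=(W^1_p(\R_+),L^p(\R_+))_{1-\al,p}$, so by the standard $K$-functional / retraction argument it suffices to produce, for each $u(\cdot,t)$, a one-parameter family interpolating between $L^p$ and $W^1_p$ whose $K$-functional is controlled by the weighted integrand. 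The natural choice is a Poisson- or mollification-type regularization at scale $\lambda$; the weight $(x_n\wedge t^{1/2})^{p-p\al}$ then arises exactly because the ``distance to the boundary'' $x_n$ (truncated at the parabolic scale $t^{1/2}$) plays the role of the interpolation parameter, producing the familiar Hardy-type/trace inequality
\begin{align*}
\int_0^\infty\!\!\int_0^\infty \frac{|u(x',x_n,t)-u(y',y_n,t)|^p}{(|x'-y'|+|x_n-y_n|)^{n+p\al}}\,dx\,dy
\le c\int_{\R_+}(x_n\wedge t^{\frac12})^{p-p\al}\bigl(|D_xu|^p+|u|^p\bigr)\,dx.
\end{align*}
I would integrate this in $t$ to get the $L^p((0,\infty);{\mathcal B}^\al_p)$ part.

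For the temporal piece, one uses ${\mathcal B}^{\frac12\al}_p((0,\infty))=(W^1_p,L^p)_{1-\frac12\al,p}$ from the second displayed interpolation identity in section \ref{function spaces}. Here the subtlety is that $\frac12\al\in(0,\frac12)$, so the relevant fractional seminorm is $\int\int |u(x,t)-u(x,s)|^p|t-s|^{-1-p\frac\al2}\,dt\,ds$, and one needs a weight growing like $(x_n\wedge t^{1/2})^{2p-p\al}$ attached to $|D_tu|^p$ together with a lower-order term $(x_n\wedge t^{1/2})^{2p-p\al}|u|^p$; the power $2$ in the exponent reflects the parabolic scaling $t\sim x_n^2$. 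The argument is again a weighted Hardy inequality in the $t$ variable with the scale cut off at $x_n^2$ (equivalently $t^{1/2}$ cut off at $x_n$), after which one integrates in $x$. Combining the spatial and temporal estimates and recalling the definition \eqref{p-besov} of ${\mathcal B}^{\al,\frac12\al}_p$ yields \eqref{interpolation2}; the presence of the $|u|^p$ terms on the right is needed to absorb boundary contributions in the Hardy inequalities near $x_n=0$ and near $t=0$.

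The main obstacle is getting the two \emph{different} weights — $(x_n\wedge t^{1/2})^{p-p\al}$ with first derivatives and $(x_n\wedge t^{1/2})^{2p-p\al}$ with the time derivative — to come out with the correct powers, and in particular to handle the ``corner'' where the minimum switches between $x_n$ and $t^{1/2}$. I would deal with this by splitting the domain of integration into the region $\{x_n\le t^{1/2}\}$, where the weight is a pure power of $x_n$ and the estimate is a spatial trace/Hardy inequality with $t$ a spectator parameter, and the region $\{x_n>t^{1/2}\}$, where the weight is a pure power of $t^{1/2}$ and one runs the Hardy inequality in $t$ near $t=0$; a dyadic decomposition in $x_n$ (resp.\ in $t$) in each region then reduces everything to the scale-invariant one-dimensional interpolation inequality, and the two regions match up on the set $\{x_n\asymp t^{1/2}\}$. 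The smoothness assumption $u\in C^\infty(\R^n_+\times(0,\infty))$ is what makes all these difference quotients and integrations by parts legitimate, with the final constant $c$ depending only on $n,p,\al$.
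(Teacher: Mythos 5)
There is a genuine gap: the key intermediate inequality you propose, namely the fixed-$t$ ``Hardy-type/trace inequality''
\begin{align*}
\int_{{\mathbb R}^n_+}\int_{{\mathbb R}^n_+}\frac{|u(x,t)-u(y,t)|^p}{|x-y|^{n+p\al}}\,dx\,dy
\le c\int_{{\mathbb R}^n_+}(x_n\wedge t^{\frac12})^{p-p\al}\bigl(|D_xu|^p+|u|^p\bigr)\,dx,
\end{align*}
is false. Take $u(x,t)=v(x)$ independent of $t$, smooth, nonconstant, supported in $\{x_n\ge 1\}$: the left side is a fixed positive number, while the right side is at most $c\,t^{(p-p\al)/2}\|v\|^p_{W^1_p}\to 0$ as $t\to 0^+$. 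The same objection kills your temporal piece for fixed $x$ as $x_n\to 0$. A slicewise (fixed $t$, respectively fixed $x$) weighted Hardy/interpolation argument can deliver at best the weights $x_n^{p-p\al}$ for the spatial seminorm and $t^{\,p-\frac{p\al}{2}}$ for the temporal one; since $p-p\al>0$ these are \emph{larger} than $(x_n\wedge t^{1/2})^{p-p\al}$ and $(x_n\wedge t^{1/2})^{2p-p\al}$, so the resulting inequality is strictly weaker than \eqref{interpolation2} and does not imply it. The truncation at the parabolic scale is not cosmetic: in the application (Theorem \ref{Rn}) the solution generated by an atom is singular at $t=0$ as well as at $x_n=0$, and the right-hand side of \eqref{interpolation2} is finite only because the weight degenerates in \emph{both} variables. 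Your region splitting $\{x_n\le t^{1/2}\}$ versus $\{x_n>t^{1/2}\}$ does not repair this: the spatial Besov seminorm at time $t$ sees the whole half-space, and on $\{x_n>t^{1/2}\}$ the weight $t^{(p-p\al)/2}$ carries no degeneracy in $x_n$ with which to run a spatial Hardy inequality.

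The paper obtains the min weight by coupling the two variables. It uses the trace method of real interpolation (Theorem 3.12.2 of \cite{BL}) for $X_0=L^p((0,\infty);W^1_p({\mathbb R}^n_+))$, $X_1=L^p((0,\infty);L^p({\mathbb R}^n_+))$, with the explicit interpolating family $f(s)=u(x',x_n+s,t+s^2)$ --- a translation into the interior along a \emph{parabolic path}. After changing variables, the $s$-integration is restricted to $0<s<x_n\wedge t^{1/2}$, and $\int_0^{x_n\wedge t^{1/2}}s^{p-p\al-1}\,ds$ produces exactly the weight $(x_n\wedge t^{1/2})^{p-p\al}$, while $f'(s)=D_{x_n}u+2sD_tu$ produces the extra factor $s^p$ on $|D_tu|^p$ and hence the exponent $2p-p\al$. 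The temporal piece is handled symmetrically with $g(s)=u(x',x_n+s^{1/2},t+s)$; note that this also generates a term $(x_n\wedge t^{1/2})^{p-p\al}|D_{x_n}u|^p$, which your sketch omits. If you want to keep a Hardy-inequality flavor, it must be run along the parabolic translate $(x_n,t)\mapsto(x_n+s,t+s^2)$, not in each variable separately.
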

\begin{proof}
By the property of real interpolation $\eqref{interpolation}_1$, we note that
\begin{eqnarray*}
\begin{array}{l}
\| u \|_{L^p((0,\infty):{\mathcal B}_p^{\al}   ( \R_+ ))}\\
 \leq  \inf \Big[\Big(\int^\infty_0 \|s^{1- \al} f(s)\|^p_{ L^p((0,\infty) ; W^1_p   (\R_+ ))} s^{-1} ds \Big)^{\frac1p}
 + \Big(\int^\infty_0 \|s^{1- \al} f'(s)\|^p_{L^p((0,\infty) ;L^p   (\R_+ ))} s^{-1} ds \Big)^{\frac1p}\Big],
\end{array}
\end{eqnarray*}
where infimum is taken by $f:[0,\infty)
\ri L^p((0,\infty) ; W^1_p   (\R_+ )) + L^p((0, \infty) ; L^p (\R_+ ))$  satisfying
$f(0) =  u $ (see Theorem 3.12.2 in \cite{BL}).  Define $f(s) = u (x',x_n + s ,t+
s^2) $. Then $f(0) =  u$, and hence
\begin{align*}
  \|  u \|^p_{L^p((0,\infty) ;{\mathcal B}_p^{\al}   (\R_+))}
& \leq \int_0^\infty \int^\infty_0 \|s^{1- \al} f(s)\|^p_{  W^1_p   (\R_+)} s^{-1} dtds
+ \int_0^\infty \int^\infty_0 \|s^{1- \al} f'(s)\|^p_{ L^p (\R_+ )} s^{-1} dtds\\
& \leq  c \int_0^\infty \int_0^\infty  \int_{\R_+}s^{p-p\al -1}\Big(|u (x',x_n +
          s,t+s^2)|^p + |D_x u(x', x_n + s,t+s^2)|^p\\
& \qquad  + s^p|D_t u (x',x_n +s ,t+ s^2)|^p \Big)  dx dtds.
\end{align*}
Changing variables and exchanging the order of integrations,  the right hand side of the above last inequality is less than
\begin{eqnarray*}
\int_0^\infty \int_{\R_+} \Big(|u   (x,t)|^p+ |D_x u (x,t)|^p \Big)  \int_0^{ x_n \wedge t^\frac12} s^{p - \al p -1} ds
+ |D_tu   (x,t)|^p \Big(\int_0^{x_n \wedge t^\frac12} s^{2p - \al p -1} ds \Big) dxdt.
\end{eqnarray*}
Since $p-p\al >0$, the above is dominated by
\begin{eqnarray*}
 \int_0^\infty \int_{\R_+} (x_n \wedge t^\frac12)^{ p-p\al}\Big(
|D_x u (x,t)|^p + |u  (x,t)|^p + (x_n \wedge t^\frac12)^p|D_t u (x,t)|\Big)  dxdt.
\end{eqnarray*}

Next, we define $g(s) =  u (x',x_n + s^\frac12 ,t+
s)$. Then, $g(0)=u$ and by the property of real interpolation  $\eqref{interpolation}_2$,  we have
\begin{align*}
  \| u\|^p_{L^p(\R_+; {\mathcal B}_p^{\frac12 \al}   (0,\infty))}
&\leq  \int^\infty_0 \int_{\R_+} \Big( \|s^{1- \frac{\al}{2}} g(s)\|^p_{W^1_p   (0,\infty)} s^{-1} ds
+  \|s^{1- \frac{\al}{2}} g'(s)\|^p_{L^p (0,\infty)} s^{-1} \Big) ds\\
&\leq \int^\infty_0\int^\infty_0 \int_{\R_+} s^{p- \frac{\al p}{2}-1} \Big(| u(x',x_n+s, t+s^2)|^p+ | D_tu(x',x_n+s, t+s^2)|^p\Big)   \\
&\quad+s^{\frac{p}{2}- \frac{\al p}{2}-1}|D_{x_n}u(x',x_n+s, t+s^2)|^p dxdt ds.
\end{align*}
Changing variables and exchanging the order of integrations,  the right hand side of the above last inequality is less than
\begin{align*}
&\int_0^\infty \int_{\R_+} \Big(|u   (x,t)|^p+ |D_t u (x,t)|^p \Big)  \int_0^{ x_n^2 \wedge t} s^{p - \frac{\al p}{2} -1} ds
+ |D_{x_n}u   (x,t)|^p \Big(\int_0^{x_n^2 \wedge t} s^{\frac{p}{2} - \frac{\al p}{2} -1} ds \Big) dxdt\\
 &\leq c \int_0^\infty \int_{\R_+} (x_n \wedge t^\frac12)^{ 2p-p\al}\Big(
|D_t u (x,t)|^p + |u  (x,t)|^p+ (x_n \wedge t^\frac12)^{-p}|D_{x_n} u (x,t)|\Big)  dxdt.
\end{align*}
\end{proof}

\begin{lemm}\label{trace2}
Let $1< \al < 2$ and  $1 \leq p < \infty$. Let $u\in  C^\infty({\mathbb R}^n_+\times (0,\infty)).$
Then
\begin{align}\label{al-2}
 \notag \| u\|^p_{{\mathcal B}_p^{\al,\frac12 \al} (\R_+  \times (0,\infty) )}
 & \leq c \int \int_{\R_+ \times (0,\infty)}
  (x_n \wedge t^\frac12)^{2p-p\al}  \Big( |D^2_{x}  u  |^p + |D_x u|^p + |u   |^p+|D_tu|^p\Big) \\
  &\qquad
  +   (x_n \wedge t^\frac12)^{3p-p\al}\Big( |D_tu|^p+|D_x D_t u  |^p+|D_x u|^p\Big)
  d x dt.
\end{align}
\end{lemm}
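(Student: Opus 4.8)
The plan is to imitate the proof of Lemma \ref{trace}, using the two characterizations of the anisotropic Besov norm via real interpolation in \eqref{interpolation}, but now using the second-order rung of the interpolation scale since $1<\al<2$. For the spatial part $L^p((0,\infty);{\mathcal B}^\al_p(\R_+))$ I would use $\eqref{interpolation}_1$ in the regime $1<\al<2$, so that ${\mathcal B}^\al_p=(W^2_p,W^1_p)_{2-\al,p}$; by the trace/reiteration description of the $K$-functional (Theorem 3.12.2 in \cite{BL}) the norm is bounded by
\[
\inf_f\Big[\Big(\int_0^\infty\|s^{1-(2-\al)}f(s)\|_{L^p((0,\infty);W^2_p(\R_+))}^p\,\tfrac{ds}{s}\Big)^{1/p}+\Big(\int_0^\infty\|s^{1-(2-\al)}f'(s)\|_{L^p((0,\infty);W^1_p(\R_+))}^p\,\tfrac{ds}{s}\Big)^{1/p}\Big]
\]
over $f$ with $f(0)=u$. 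As in Lemma \ref{trace} I would choose $f(s)=u(x',x_n+s,t+s^2)$, so $f'(s)=D_{x_n}u+2sD_tu$ evaluated at the shifted point. The power of $s$ is now $1-(2-\al)=\al-1$, so $s^{(\al-1)p-1}$ appears; since $1<\al<2$ we have $(\al-1)p>0$ and $\int_0^{x_n\wedge t^{1/2}}s^{(\al-1)p-1}\,ds\simeq (x_n\wedge t^{1/2})^{(\al-1)p}$, which combined with two spatial derivatives (from $W^2_p$) and the extra factor $s^p$ coming from $f'$ gives exactly the terms $(x_n\wedge t^{1/2})^{2p-p\al}(|D^2_xu|^p+|D_xu|^p+|u|^p)$ and, from the $2sD_tu$ contribution to $f'$ (an extra $s^p$) together with $W^1_p$ of $f'$, the terms $(x_n\wedge t^{1/2})^{3p-p\al}(|D_xD_tu|^p+|D_tu|^p+\dots)$ after changing variables and swapping the order of integration exactly as in Lemma \ref{trace}.

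For the temporal part $L^p(\R_+;{\mathcal B}^{\al/2}_p(0,\infty))$ note that $0<\al/2<1$ since $1<\al<2$, so I would use $\eqref{interpolation}_2$, i.e. ${\mathcal B}^{\al/2}_p(0,\infty)=(W^1_p,L^p)_{1-\al/2,p}$, giving a bound by
\[
\inf_g\Big[\Big(\int_0^\infty\|s^{\al/2}g(s)\|_{W^1_p(0,\infty)}^p\,\tfrac{ds}{s}\Big)^{1/p}+\Big(\int_0^\infty\|s^{\al/2}g'(s)\|_{L^p(0,\infty)}^p\,\tfrac{ds}{s}\Big)^{1/p}\Big].
\]
Choosing $g(s)=u(x',x_n+s^{1/2},t+s)$ as in Lemma \ref{trace}, $g'(s)$ produces $\tfrac12 s^{-1/2}D_{x_n}u+D_tu$, and the $s$-power $\al p/2 - 1$ leads, after $\int_0^{x_n^2\wedge t}s^{\al p/2-1}\,ds\simeq(x_n\wedge t^{1/2})^{\al p}$, to the remaining terms; the factor $s^{p/2}$ from $g'$'s first piece lowers the exponent by $p$, so one gets, e.g., a $(x_n\wedge t^{1/2})^{\al p - p}$ weight on $|D_{x_n}u|^p$, which is controlled by the stated $(x_n\wedge t^{1/2})^{2p-p\al}$ weight on $|D_xu|^p$ only if one first interpolates one derivative — so here I would instead feed in the already-differentiated function or argue that $|D_{x_n}u|^p(x_n\wedge t^{1/2})^{\al p-p}\le |D_{x_n}u|^p (x_n\wedge t^{1/2})^{2p-\al p}$ is false and the correct route is to bound $\|{\mathcal B}^{\al/2}_p(0,\infty)\|$ of $u$ by that of $u$ together with one spatial derivative, matching the $|D_xu|^p$, $|D_xD_tu|^p$ terms on the right-hand side; the $W^1_p$ part of $g$ itself supplies the $|u|^p$, $|D_tu|^p$ contributions at weight $(x_n\wedge t^{1/2})^{\al p}\le c(x_n\wedge t^{1/2})^{2p-\al p}$ on bounded cylinders (and directly when $\al p\ge 2p-\al p$).

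The routine parts are the changes of variables $(x_n+s,t+s^2)\mapsto(x_n,t)$ and $(x_n+s^{1/2},t+s)\mapsto(x_n,t)$ and Fubini, which go through verbatim as in Lemma \ref{trace}, together with the elementary observation that for $\tau=x_n\wedge t^{1/2}$ bounded one has $\tau^{a}\le C\tau^{b}$ whenever $a\ge b$, used to absorb the various lower-weight terms into the ones listed in \eqref{al-2}. The main obstacle, and the only place requiring genuine care, is bookkeeping the exact powers of $s$: the second-order spatial scale contributes $s^{\al-1}$ while the (still first-order) temporal scale contributes $s^{\al/2}$, and one must check that every derivative combination produced by differentiating the shift functions $f,g$ lands on a term with weight exponent $\ge 2p-p\al$ (equivalently, that no combination forces a negative net power of $s$ that would make $\int_0^\tau s^{\gamma-1}\,ds$ diverge). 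Since $1<\al<2$ gives $(\al-1)p>0$ and $(\al/2)p>0$, all the $s$-integrals near $0$ converge, so the argument closes; one then collects the six distinct weighted terms and recognizes them as exactly those on the right-hand side of \eqref{al-2}.
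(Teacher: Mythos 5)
Your overall strategy (the trace-method characterization of the real interpolation spaces applied to shifted copies of $u$, followed by a change of variables and Fubini) is the same as the paper's, but the proposal has two problems, one of bookkeeping and one of substance. First, the interpolation weights are wrong. In Lemma \ref{trace} the paper weights by $s^{\theta}$ where $\theta$ is the interpolation parameter (there $\theta=1-\al$, weight $s^{1-\al}$); for $1<\al<2$ the parameter is $2-\al$, so the spatial part carries $s^{2-\al}$ and the temporal part $s^{1-\al/2}$, and it is precisely $\int_0^{x_n\wedge t^{1/2}}s^{(2-\al)p-1}\,ds\simeq(x_n\wedge t^{1/2})^{(2-\al)p}$ that produces the stated weight $2p-p\al$. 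Your exponents $s^{\al-1}$ and $s^{\al/2}$ give $(x_n\wedge t^{1/2})^{(\al-1)p}$ and $(x_n\wedge t^{1/2})^{\al p}$, which do not match \eqref{al-2}, and your fallback $\tau^a\le C\tau^b$ "on bounded cylinders" is unavailable because the lemma is stated on all of ${\mathbb R}^n_+\times(0,\infty)$, where $x_n\wedge t^{1/2}$ is unbounded.

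Second, and more seriously, the temporal estimate is not closed. You correctly notice that the naive choice $g(s)=u(x',x_n+s^{1/2},t+s)$ produces the term $\tfrac12 s^{-1/2}D_{x_n}u$ in $g'(s)$, and that this term cannot be absorbed; with the correct weight $s^{1-\al/2}$ the offending contribution is $s^{(1-\al)p/2-1}|D_{x_n}u|^p$, whose $s$-integral near $0$ diverges for every $\al>1$, so the first-order shift simply fails in this range. The paper's fix is the Taylor-corrected test function
$g(s)=u(x',x_n+s^{1/2},t+s)-s^{1/2}D_{x_n}u(x',x_n+s^{1/2},t+s)$,
for which the $s^{-1/2}D_{x_n}u$ terms cancel, leaving $g'(s)=D_tu+\tfrac12 D^2_{x_n}u-s^{1/2}D_{x_n}D_tu$ (all terms integrable against $s^{(1-\al/2)p-1}$), while the $W^1_p(0,\infty)$ norm of $g$ itself supplies the $|D_{x_n}u|^p$ and $|D_{x_n}D_tu|^p$ contributions at the weight $(x_n\wedge t^{1/2})^{(3-\al)p}$ appearing in \eqref{al-2}. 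This correction term is the one genuinely new idea of Lemma \ref{trace2} compared with Lemma \ref{trace}; your "feed in the already-differentiated function / interpolate one derivative" gestures at it but does not supply it, so the proof as written does not go through.
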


\begin{proof}
As in the proof of  Lemma \ref{trace}, we define $f(s) =   u (x,x_n + s ,t+
s^2) $. Then $f(0) = u $, and by the property of real interpolation $\eqref{interpolation}_1$, we have
\begin{align*}
 &\| u\|^p_{L^p( (0,\infty); {\mathcal B}^\al_p(\R_+))}\\
  & \leq \int_0^\infty\int^\infty_0 \|s^{2- \al} f(s)\|^p_{ W^2_p   (\R_+ )} s^{-1} dtds
+ \int_0^\infty \int^\infty_0 \|s^{2- \al} f'(s)\|^p_{W^1_p (\R_+ )} s^{-1} dtds\\
& \leq \int_0^\infty \int_0^\infty s^{2p-p\al -1}  \int_{\R_+}|u (x',x_n +
s,t+s^2)|^p + |D_x u(x', x_n + s,t+s^2)|^p  \\
&\qquad\qquad\qquad\qquad\qquad+  |D^2_x u(x', x_n + s,t+s^2)|^p\\
&\qquad+ s^p \Big(|D_t u (x',x_n +s ,t+ s^2)|^p+ |D_x D_t u (x',x_n +s,t+ s^2)|^p   \Big) dx dtds.
\end{align*}
By changing  variables and exchanging the order of integration, the right hand side of the above last inequality is less than
\begin{align*}
 &c \int \int_{\R_+ \times (0,\infty)}
    \Big( |D^2_{x}  u  |^p +|D_x u|^p  + |u   |^p \Big)\int_0^{x_n\wedge t^{\frac{1}{2}}}s^{2p-p\al-1}ds\\
  &\qquad \qquad\qquad  + \Big(|D_tu|^p
  +   |D_x D_t u  |^p\Big) \big(\int_0^{x_n\wedge t^{\frac{1}{2}}}s^{3p-p\al-1}ds \Big)d x dt\\
&\leq  c \int \int_{\R_+ \times (0,\infty)}
  (x_n \wedge t^\frac12)^{2p-p\al}  \Big( |D^2_{x}  u  |^p +|D_x u|^p  + |u   |^p \Big)\\
  &\qquad\qquad+ (x_n \wedge t^\frac12)^{3p-p\al}\Big(|D_tu|^p
  +   |D_x D_t u  |^p\Big) d x dt.
  \end{align*}

Next, we define $g(s) =  u (x',x_n + s^\frac12 ,t+
s)-    s^\frac12 D_{x_n} u  (x',x_n +
s^\frac12 ,t+ s)$. Then by the property of real interpolation $\eqref{interpolation}_2$, we have
\begin{align}
\label{z}
 \|  u\|^p_{ L^p(\R_+;{\mathcal B}_p^{\frac12 \al}   (0,\infty))}
\leq  \int^\infty_0 \int_{\R_+}  \Big( \|s^{1- \frac{\al}{2}} g(s)\|^p_{ W^1_p  (0,\infty)} s^{-1}
+   \int^\infty_0 \|s^{1- \frac{\al}{2}} g'(s)\|^p_{L^p (0,\infty)} s^{-1} \Big) dxds.
\end{align}
The first integration term of the right-hand side of \eqref{z} is dominated by
\begin{align*}
& \int_0^\infty \int_0^\infty \int_{\R_+}  s^{p(1-\frac{\al}2)-1}
\Big( |D_t u(x',x_n + s^\frac12 ,t+ s)|^p+|u(x',x_n + s^\frac12 ,t+ s)|^p \Big)\\
&\quad+ s^{\frac{3p}{2}-\frac{\al p}{2}-1} \Big(|D_{x_n} D_t u(x',x_n + s^\frac12 ,t+ s)|^p+|D_{x_n} u(x',x_n + s^\frac12 ,t+ s)|^p   \Big) dxdtds
\\
& \leq c \int_0^\infty \int_{\R_+} (x_n \wedge t^\frac12)^{ p(2-\al)}\Big(
| D_tu (x,t)|^p+|u(x,t)|^p\Big)\\
&\qquad\qquad +  (x_n \wedge t^\frac12)^{(3-\al) p}\Big( |D_{x_n}u(x,t)|^p+|D_{x_n}D_t u (x,t)|^p\Big)  dxdt.
\end{align*}

To estimate the second integration term  on the right-hand side of \eqref{z}, we note that $g'(s)
=   D_t u   + \frac12D^2_{x_n} u   - s^{\frac12} D_{x_n} D_t u $. By the same reasoning as for the estimate of the first term, the
second term is dominated by
\begin{align*}
\int_0^\infty \int_{\R_+} (x_n \wedge t^\frac12)^{ p(2-\al)}\Big(
| D_t (x,t)|^p+|D_x^2 u(x,t)|^p\Big) + (x_n \wedge t^\frac12)^{(3-\al)p}|D_x D_t u (x,t)|dxdt.
\end{align*}
\end{proof}

\subsection{ Atom decomposition of the functions in the anisotropic Besov spaces}
\label{atom decompositionsection}

Now, we introduce atomic decomposition of functions in anisotropic space (see  \cite{Bo} for the reference and
see also \cite{JK} for atomic decomposition  of functions in  Besov spaces).

\begin{defin}[Definition 5.2 in \cite{Bo}]

Let $\al > 0$ and $1 \leq p \leq \infty$.
An $(\al, p)$-atom is a function in $\Rn \times
 {\mathbb R} $ satisfying
\begin{align}\label{atom}
 |a| \leq r^{\al -\frac{n+1}p}, \,\, |D_{x'} a|
\leq r^{\al -\frac{n+1}p -1 },\,\,|D_t a| \leq r^{\al -\frac{n+1}p -2}, \, \,\, supp\,\, a
\subset \De(y_0',r) \times (t_0 , t_0 + r^2)
\end{align}
for some $r>0$ and $(y_0',t_0) \in \Rn
\times {\mathbb R} $, where $ \De (y_0',r) = \{ y' \in \Rn  \,
| \, |y_0' -y'| < r \}$.
\end{defin}

\begin{prop}[Theorem 5.10 in \cite{Bo}]\label{atom decomposition}
Let $\al > 0$ and $1 \leq p \leq \infty$ and  $g \in {\mathcal B}^{\al,\frac12\al}_p (\Rn \times
{\mathbb R} )$. Then there are sequences $\{a_k\}_{1 \leq k < \infty} $ and $ \{ c_k\}_{1\leq k < \infty}$ of  atoms
and real numbers such that $g = \sum c_k a_k $ and
\begin{align*}
 \big(\sum_{1 \leq k < \infty} |c_k|^p \big)^\frac1p \leq c\| g \|_{{\mathcal B}^{\al, \frac12\al}_p (\Rn \times {\mathbb R}) }.
\end{align*}
\end{prop}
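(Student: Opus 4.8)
The plan is to reduce to an anisotropic (parabolic) Littlewood--Paley decomposition, and then to localise each frequency block to a single parabolic cube; the atoms will be the localised blocks, suitably renormalised.

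\emph{Step 1 (Littlewood--Paley reduction).} Fix a parabolic dyadic resolution of unity on the Fourier side: the $j$-th symbol ($j\ge 1$) supported in the annulus $\{(\xi',\tau)\,:\,|\xi'|+|\tau|^{1/2}\sim 2^j\}$, plus a low-frequency symbol, and let $P_j$ be the corresponding convolution operators, so $g=\sum_{j\ge 0}P_j g$. The first thing to record is the norm equivalence
\[
\|g\|^p_{{\mathcal B}^{\al,\frac12\al}_p(\Rn\times{\mathbb R})}\;\approx\;\sum_{j\ge 0}2^{j\al p}\,\|P_j g\|^p_{L^p(\Rn\times{\mathbb R})},
\]
which for $0<\al<2$ follows from the interpolation descriptions of ${\mathcal B}^\al_p$ and ${\mathcal B}^{\frac12\al}_p$ recalled in Section~\ref{function spaces}, applied separately in $x'$ and in $t$ (it can also be quoted from \cite{Bo} or \cite{C}); for $p=\infty$ one takes $\sup$ in place of the $\ell^p$-sum. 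From here I work in the range $0<\al<1$ used throughout the paper, so that the first-order space and time conditions in \eqref{atom} already saturate the smoothness of an atom.

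\emph{Step 2 (localising a frequency block).} Put $N:=n+1$. For each $j$, fix a smooth partition of unity $1=\sum_{Q\in{\mathcal D}_j}\chi_Q$ subordinate to the grid ${\mathcal D}_j$ of parabolic cubes $Q$ of spatial side $2^{-j}$ and temporal side $2^{-2j}$, with $|D_{x'}\chi_Q|\lesssim 2^j$ and $|D_t\chi_Q|\lesssim 2^{2j}$, and set $b_Q:=\chi_Q\,P_j g$. Then $b_Q$ is supported in a fixed dilate $Q^*$ of $Q$, hence (after adjusting constants) in a set of the form $\De(y_0',r)\times(t_0,t_0+r^2)$ with $r\sim 2^{-j}$. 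Writing $M_Q:=\sup_{Q^*}|P_j g|$ and using the local form of Bernstein's inequality for the band-limited $P_j g$ --- obtained from a reproducing identity $P_j=P_j\widetilde P_j$, the rapidly decaying tails being absorbed by enlarging $Q^*$ --- one gets $|b_Q|\le M_Q$, $|D_{x'}b_Q|\lesssim 2^j M_Q$ and $|D_t b_Q|\lesssim 2^{2j}M_Q$. Hence
\[
a_Q:=\lambda_Q^{-1}b_Q,\qquad \lambda_Q:=2^{j(\al-\frac Np)}M_Q,
\]
is, up to a universal constant, an $(\al,p)$-atom with $r\sim 2^{-j}$, since $|a_Q|\le r^{\al-\frac Np}$, $|D_{x'}a_Q|\le r^{\al-\frac Np-1}$ and $|D_t a_Q|\le r^{\al-\frac Np-2}$.

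\emph{Step 3 (summing the coefficients).} For fixed $j$, $\sum_{Q\in{\mathcal D}_j}\lambda_Q^p=2^{jp(\al-\frac Np)}\sum_{Q\in{\mathcal D}_j}M_Q^p$. By the Plancherel--P\'olya inequality applied to the band-limited $P_j g$ (equivalently, Young's inequality on the dyadic lattice with the Schwartz kernel reproducing $P_j g$), $\sum_{Q\in{\mathcal D}_j}M_Q^p\lesssim 2^{jN}\|P_j g\|^p_{L^p}$, so the factors $2^{\pm jN}$ cancel and $\sum_{Q\in{\mathcal D}_j}\lambda_Q^p\lesssim 2^{jp\al}\|P_j g\|^p_{L^p}$. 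Summing in $j$ and using Step~1,
\[
\sum_{j,Q}\lambda_Q^p\;\lesssim\;\sum_{j\ge 0}2^{jp\al}\|P_j g\|^p_{L^p}\;\approx\;\|g\|^p_{{\mathcal B}^{\al,\frac12\al}_p(\Rn\times{\mathbb R})}.
\]
Finally $g=\sum_j P_j g=\sum_j\sum_{Q\in{\mathcal D}_j}\chi_Q P_j g=\sum_{j,Q}\lambda_Q a_Q$ in ${\mathcal S}'$; relabelling the countable family $\{(j,Q)\}$ as $\{k\}$ yields atoms $a_k$ and reals $c_k$ with $(\sum_k|c_k|^p)^{1/p}\le c\,\|g\|_{{\mathcal B}^{\al,\frac12\al}_p}$. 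The cases $p=1,\infty$ go the same way.

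\emph{Main obstacle.} Two points carry the weight. First, the equivalence in Step~1 between the interpolation/difference-quotient definition of the anisotropic space and the parabolic Littlewood--Paley norm: this is where $0<\al<2$ and the mixed-norm structure $L^p({\mathbb R};{\mathcal B}^\al_p(\Rn))\cap L^p(\Rn;{\mathcal B}^{\frac12\al}_p({\mathbb R}))$ really matter. Second, and more delicate, is making $\chi_Q P_j g$ into a genuine compactly supported atom with the \emph{sharp} first-order bounds: since $\chi_Q P_j g$ is no longer band-limited, the derivative bounds on $b_Q$ rely on the local Bernstein inequality for $P_j g$, and the summability of the $\lambda_Q$ rests on the Plancherel--P\'olya estimate $\sum_Q\sup_{Q^*}|P_j g|^p\lesssim 2^{jN}\|P_j g\|^p_{L^p}$ --- it is exactly the cancellation of $2^{\pm jN}$ there that reproduces the correct weight $2^{j\al p}$. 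Everything else is bookkeeping.
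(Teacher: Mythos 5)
The paper offers no proof of this proposition: it is imported verbatim as Theorem 5.10 of Bownik \cite{Bo}, so there is no in-paper argument to measure yours against. Your sketch reconstructs the standard smooth atomic decomposition (parabolic Littlewood--Paley blocks, localisation to parabolic cubes, Plancherel--P\'olya), which is in the spirit of what the cited reference actually does, and the bookkeeping in Step 3 is right: the normalisation $2^{j(\al-\frac{n+1}{p})}$ against the lattice count $2^{j(n+1)}$ does reproduce the weight $2^{j\al p}\|P_jg\|_{L^p}^p$, and the resulting series converges to $g$ in $L^p$ for $p<\infty$ since $\sum_j\|P_jg\|_{L^p}<\infty$ when $\al>0$.

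Three points deserve flagging. First, Step 1 is itself a theorem, not a triviality: the paper defines ${\mathcal B}^{\al,\frac12\al}_p$ as the intersection $L^p({\mathbb R};{\mathcal B}^{\al}_p(\Rn))\cap L^p(\Rn;{\mathcal B}^{\frac12\al}_p({\mathbb R}))$, and the equivalence of this mixed-norm intersection with the parabolic Littlewood--Paley norm is precisely the kind of statement one would otherwise quote from \cite{Bo} or \cite{DT2}; a genuinely self-contained proof must include it, otherwise you have traded one citation for another. Second, the bound $|D_{x'}b_Q|\lesssim 2^jM_Q$ with $M_Q=\sup_{Q^*}|P_jg|$ is not literally true: the reproducing identity $P_j=P_j\widetilde P_j$ controls $D_{x'}P_jg(x)$ by $2^j$ times a tail-weighted average of $|P_jg|$ over all of $\Rn\times{\mathbb R}$, and a fixed dilate of $Q$ does not absorb those tails. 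The standard repair is to set $M_Q:=\sup_{Q^*}\bigl(|P_jg|+2^{-j}|D_{x'}P_jg|+2^{-2j}|D_tP_jg|\bigr)$, or to use the Peetre maximal function; the Plancherel--P\'olya estimate of Step 3 applies to the derivative terms with the same scaling, so nothing downstream changes. Third, you restrict to $0<\al<1$ whereas the proposition is stated for all $\al>0$; since the decomposition (analysis) direction only needs the constructed pieces to satisfy the first-order bounds of \eqref{atom}, which your $a_Q$ do for every $\al$, the restriction is unnecessary --- though harmless for this paper, which only invokes the result for $0<\al<1$.
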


\section{Pointwise estimates of Solution in $\R_+ \times (0,\infty)$}\label{pointwise}
\setcounter{equation}{0}
In this section, we would like to derive pointwise estimates of solution of the Stokes system \eqref{maineq2} with boundary data
$g=(g',0)\in {\mathcal B}^{\al, \frac12\al}_p (\Rn \times {\mathbb R})$, for $1 \leq p \leq  \infty$
and $0 <\al < 1$ (by the reasoning in section \ref{kernal}, we may assume $g_n =0$).
From Proposition \ref{atom decomposition}, without loss of generality, we assume that the component functions $g'_k, \,\, 1 \leq k \leq n-1$ of
$g=(g',0)$ consist of  $(\alpha,p)$-atoms. For simplicity, assume $g'_k=a\delta_{kj}$ for fixed
$1 \leq j \leq n-1$, where  $a$ is an  $(\al, p)$- atom such that $supp \,\, a \subset \De(0,r) \times (0, r^2)$. By \eqref{representation},
the solution $u=(u_1,\cdots, u_n)$ of \eqref{maineq2} is represented by
\begin{align}
\label{sample1}
u^i(x,t) = \int_0^t \int_{\Rn} K_{ij}(x'-y', x_n, t-s) a(y', s) dy'ds, \quad 1 \leq i \leq n.
\end{align}

\begin{lemm}\label{Lemma-0}
Let $t \geq (2r)^2$. Then
\begin{align}\label{EQ1}
&\notag|D_{x_n}^{l_0} D_{x'}^{k_0} D^{m_0}_t  u(x,t)| \\
& \leq c\left\{ \begin{array}{ll} \vspace{2mm}
r^{\al -\frac{n+1}p +n+1} t^{-\frac12 -m_0} (  |x'|^2 + x_n^2 + t)^{-\frac{n+k_0}2} (x_n^2 +t)^{-\frac{l_0}2} & \mbox{if } |x'| \geq 2r,\\
r^{\al -\frac{n+1}p + n+1} t^{-\frac12 -m_0} (  x_n^2  + t )^{-\frac{n+k_0 + l_0}2}& \mbox{if } |x'| \leq 2r.
\end{array}
\right.
\end{align}

\end{lemm}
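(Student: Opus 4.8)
The plan is to differentiate the representation \eqref{sample1} under the integral sign, transferring all of the derivatives $D_{x_n}^{l_0}D_{x'}^{k_0}D_t^{m_0}$ onto the kernel $K_{ij}$, then to insert the pointwise kernel bound \eqref{inequality1} and integrate against the atom using only its $L^\infty$ size estimate. First I would observe that since $\mathrm{supp}\,a\subset\De(0,r)\times(0,r^2)$ and $t\geq(2r)^2=4r^2>r^2$, the $s$-integration in \eqref{sample1} runs effectively over $(0,r^2)$, a set strictly inside $(0,t)$; consequently $D_{x_n}^{l_0}D_{x'}^{k_0}D_t^{m_0}$ passes under the integral directly onto $K_{ij}(x'-y',x_n,t-s)$ with no boundary term arising from the upper endpoint $s=t$, giving
\begin{align*}
D_{x_n}^{l_0}D_{x'}^{k_0}D_t^{m_0}u^i(x,t)=\int_0^{r^2}\int_{\De(0,r)}D_{x_n}^{l_0}D_{x'}^{k_0}D_t^{m_0}K_{ij}(x'-y',x_n,t-s)\,a(y',s)\,dy'\,ds.
\end{align*}

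Next I would apply \eqref{inequality1} to the integrand and simplify using $0<s<r^2\leq\tfrac14 t$. This yields $\tfrac34 t\leq t-s\leq t$, so that $t-s$, $x_n^2+(t-s)$, and $(t-s)^{m_0+\frac12}$ are comparable to $t$, $x_n^2+t$, and $t^{m_0+\frac12}$ respectively. For the remaining spatial factor I would split on the size of $|x'|$. When $|x'|\geq 2r$ one has $\tfrac12|x'|\leq|x'-y'|\leq\tfrac32|x'|$ for all $|y'|\leq r$, so $|x'-y'|^2+x_n^2+(t-s)$ is comparable to $|x'|^2+x_n^2+t$; this reproduces the first alternative in \eqref{EQ1}. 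When $|x'|\leq 2r$ one has $|x'-y'|^2\leq 9r^2\leq 3(t-s)$ (here is where the hypothesis $t\geq 4r^2$ is used), so $|x'-y'|^2+x_n^2+(t-s)$ is comparable to $x_n^2+(t-s)$, hence to $x_n^2+t$, and then both factors $(|x'-y'|^2+x_n^2+(t-s))^{(n+k_0)/2}$ and $(x_n^2+(t-s))^{l_0/2}$ collapse to powers of $x_n^2+t$ whose product is $(x_n^2+t)^{(n+k_0+l_0)/2}$, matching the second alternative in \eqref{EQ1}. In either case, uniformly over $(y',s)$ in the support of $a$, the quantity $|D_{x_n}^{l_0}D_{x'}^{k_0}D_t^{m_0}K_{ij}(x'-y',x_n,t-s)|$ is bounded by $c$ times the right-hand side of \eqref{EQ1} with the prefactor $r^{\al-\frac{n+1}{p}+n+1}$ deleted.

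Finally I would pull this uniform bound out of the double integral and estimate
\begin{align*}
\int_0^{r^2}\int_{\De(0,r)}|a(y',s)|\,dy'\,ds\leq c\,r^{\al-\frac{n+1}{p}}\cdot r^{n-1}\cdot r^2=c\,r^{\al-\frac{n+1}{p}+n+1},
\end{align*}
using only $|a|\leq r^{\al-\frac{n+1}{p}}$ from \eqref{atom} together with $|\De(0,r)|\leq c\,r^{n-1}$ and $|(0,r^2)|=r^2$; multiplying the two contributions yields \eqref{EQ1}.

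The argument is essentially bookkeeping once the kernel estimate \eqref{inequality1} is available: no cancellation among the components of $K_{ij}$ is needed, since all derivatives are transferred to the kernel and only the pointwise size of the atom enters (the derivative bounds on $a$ in \eqref{atom} play no role in this regime). The one step requiring genuine care is the case $|x'|\leq 2r$, where one must verify that the spatial scale $r$ of the atom is dominated by $t^{1/2}$ — which is exactly the standing assumption $t\geq(2r)^2$ — so that the variable $x'$ drops out of the bound entirely; this is the reason the second alternative in \eqref{EQ1} carries no $|x'|$-dependence.
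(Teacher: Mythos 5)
Your proposal is correct and follows essentially the same route as the paper: differentiate under the integral, apply the kernel bound \eqref{inequality1} together with the $L^\infty$ size of the atom, use $t-s\sim t$ on the support, and split on $|x'|\gtrless 2r$; the only cosmetic difference is that you argue pointwise comparability of the kernel factors over the support of $a$ and then multiply by its measure, whereas the paper packages the $y'$-integration as the separate inequality \eqref{0824} with $\te_1=\frac{n+k_0}{2}$. Both yield the prefactor $r^{\al-\frac{n+1}{p}+n+1}$ and the stated bounds, so no gap.
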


\begin{proof}
Note that  for $1 \leq i \leq n$,  we have
\begin{align}\label{form}
D_{x_n}^{l_0} D_{x'}^{k_0} D^{m_0}_{t}  u^i(x,t)
= \int_0^{r^2} \int_{\Rn}   D_{x_n}^{l_0} D_{x'}^{k_0} D^{m_0}_t K_{ij} (x' -y', x_n, t-s)  a (y',s) dy'ds.
\end{align}
Since $t \geq (2r)^2$, from  the estimate \eqref{inequality1} of $K_{ij}$ and \eqref{atom}, we have
\begin{align}\label{EQ1-1}
&\notag|D_{x_n}^{l_0} D_{x'}^{k_0} D^{m_0}_t  u ^i(x,t)| \\
&\notag\leq c r^{\al -\frac{n+1}p } \int_0^{r^2}\int_{|y'|< r}
(t-s)^{-\frac12 -m_0} (|x'-y'|^2 + x_n^2+ t-s)^{-\frac{n}2 -\frac{k_0}2}  (x_n^2 + t-s)^{-\frac{l_0}2}dy'ds\\
& \leq c r^{\al -\frac{n+1}p } \int_0^{r^2}\int_{|y'|< r}
t^{-\frac12 -m_0} (|x'-y'|^2 + x_n^2+ t)^{-\frac{n}2 -\frac{k_0}2}  (x_n^2 + t)^{-\frac{l_0}2}dy'ds.
\end{align}
Note that for $\te_1> \frac{n-1}2$, we have
\begin{align}\label{0824}
\int_{|y'| < r} (|x'-y'|^2 + x_n^2+ t)^{-\te_1 }  dy'
 \leq c\left\{ \begin{array}{ll} \vspace{2mm}
  r^{n-1}(  |x'|^2 +  x_n^2 +t)^{-\te_1} ,& \mbox{if } |x'| \geq 2r\\
r^{n-1} (  x_n^2  +t )^{-\te_1 },& \mbox{if } |x'| \leq 2r.
\end{array}\right.
\end{align}
Taking $\te_1 = \frac{n+ k_0}2 > \frac{n-1}2$ in \eqref{0824} and applying to the right hand side of \eqref{EQ1-1},
we obtain the estimate \eqref{EQ1}.
\end{proof}
\begin{lemm}\label{Lemma-1}
Let $t \leq (2r)^2$.
\begin{enumerate}
\item
If $|x'|\leq 2r$ and $ x_n^2\leq t$, then
\begin{align}
\label{integral1}
|D_{x_n}^{l_0} D_{x'}^{k_0} D^{m_0}_t  u (x,t)|
 \leq
\left\{ \begin{array}{ll} \vspace{2mm}
c r^{\al -\frac{n+1}p-2m_0 }x_n^{-( k_0 + l_0)} & \mbox{ for }k_0+l_0\geq 1,\\
c r^{\al -\frac{n+1}p-2m_0 }   \ln ( 1 + { \frac{t}{x_n^2} } )   & \mbox{ for }k_0+l_0=0.
\end{array}\right.
\end{align}

\item
If $|x'|\leq 2r$ and $x_n^2\geq t$, then
\begin{align}
\label{integral2}
 |D_{x_n}^{l_0} D_{x'}^{k_0} D^{m_0}_t  u(x,t)|
 \leq c r^{\al -\frac{n+1}p-2m_0 }x_n^{-(k_0 + l_0+1) }t^{\frac{1}{2}}.
 \end{align}

\item
If $|x'|\geq 2r$ and $ x_n^2\leq t$, then
\begin{align}
\label{integral3}
 \notag&|D_{x_n}^{l_0} D_{x'}^{k_0} D^{m_0}_t  u (x,t)|\\
 &\leq \left\{\begin{array}{ll} \vspace{2mm}
 cr^{\al -\frac{n+1}p-2m_0+n-1 } (|x'|^2+x_n^2)^{\frac{-n-k_0}2} \ln ( 1 + {\frac{t}{x_n^2}} )&\mbox{ for }l_0=1,\\
cr^{\al -\frac{n+1}p-2m_0+n-1 } (|x'|^2+x_n^2)^{ \frac{-n-k_0}2} (x_n^2 + t)^{ \frac{1-l_0}2} &\mbox{ for }l_0\neq 1.
 \end{array}\right.
\end{align}

\item
If $|x'|\geq 2r$ and $ x_n^2\geq t$, then
\begin{align}
\label{integral4}
 |D_{x_n}^{l_0} D_{x'}^{k_0} D^{m_0}_t  u(x,t)|
 \leq c r^{\al -\frac{n+1}p-2m_0+n-1 } x_n^{-l_0}(|x'|^2+x_n^2)^{-\frac{n+k_0}{2}}t^{\frac{1}{2}}.
\end{align}
\end{enumerate}
\end{lemm}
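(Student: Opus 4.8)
The plan is to estimate the representation \eqref{form} of $D_{x_n}^{l_0}D_{x'}^{k_0}D_t^{m_0}u^i$ directly, using the pointwise kernel bound \eqref{inequality1}, the size and support conditions \eqref{atom} of the atom $a$, and the elementary inequality \eqref{0824}; all constants produced are independent of $r$ and of the atom. The four alternatives in the statement match the dichotomies $x_n^2\le t$ versus $x_n^2\ge t$ and $|x'|\le 2r$ versus $|x'|\ge 2r$, always in the regime $t\le(2r)^2$.

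\emph{Step 1 (disposing of $D_t^{m_0}$).} Since $D_t^{m_0}K_{ij}(x'-y',x_n,t-s)=(-1)^{m_0}D_s^{m_0}\big(K_{ij}(x'-y',x_n,t-s)\big)$ and, by \eqref{atom} with $t_0=0$, the function $s\mapsto a(y',s)$ and all its $s$-derivatives vanish near $s=0$ and $s=r^2$, integrating by parts $m_0$ times in $s$ in \eqref{form} gives
\begin{align*}
D_{x_n}^{l_0}D_{x'}^{k_0}D_t^{m_0}u^i(x,t)=\int_0^{r^2}\int_{\Rn}D_{x_n}^{l_0}D_{x'}^{k_0}K_{ij}(x'-y',x_n,t-s)\,D_s^{m_0}a(y',s)\,dy'\,ds ,
\end{align*}
with $|D_s^{m_0}a|\le r^{\al-\frac{n+1}{p}-2m_0}$ (for the orders $m_0\in\{0,1\}$ needed later this is exactly the regularity of an atom). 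It is therefore enough to bound $r^{\al-\frac{n+1}{p}-2m_0}\int_0^{r^2}\int_{|y'|<r}|D_{x_n}^{l_0}D_{x'}^{k_0}K_{ij}(x'-y',x_n,t-s)|\,dy'\,ds$ uniformly in $i$.

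\emph{Steps 2--3 (the remaining double integral).} Insert \eqref{inequality1}. When $|x'|\le 2r$ I enlarge the $y'$-integral to all of $\Rn$ and use $\int_{\Rn}(|z'|^2+x_n^2+\tau)^{-\frac{n+k_0}{2}}\,dz'\le c\,(x_n^2+\tau)^{-\frac{1+k_0}{2}}$; when $|x'|\ge 2r$ I keep $|y'|<r$ and apply \eqref{0824} with $\te_1=\frac{n+k_0}{2}$, producing $c\,r^{n-1}(|x'|^2+x_n^2+\tau)^{-\frac{n+k_0}{2}}\le c\,r^{n-1}(|x'|^2+x_n^2)^{-\frac{n+k_0}{2}}$. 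In both cases, with $\tau=t-s$, one is left with the scalar integral $J_\ga:=\int_{\max(0,t-r^2)}^{t}\tau^{-1/2}(x_n^2+\tau)^{-\ga/2}\,d\tau\le\int_0^{t}\tau^{-1/2}(x_n^2+\tau)^{-\ga/2}\,d\tau$, where $\ga=1+k_0+l_0$ in the inner region and $\ga=l_0$ in the outer region. Splitting at $\tau=x_n^2$: on $(0,x_n^2)$ one gets $2x_n^{1-\ga}$, on $(x_n^2,t)$ one integrates $\tau^{-(1+\ga)/2}$, which is $\le c\,x_n^{1-\ga}$ for $\ga>1$, equals $\ln(t/x_n^2)$ for $\ga=1$, and is $\le c\,t^{(1-\ga)/2}$ for $\ga<1$; when instead $x_n^2\ge t$ the whole range lies below $x_n^2$, so $x_n^2+\tau\sim x_n^2$ and $J_\ga\le c\,x_n^{-\ga}t^{1/2}$. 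Combining with Step 1: the inner region ($\ga=1+k_0+l_0$) yields \eqref{integral1} when $x_n^2\le t$ (a power $x_n^{-(k_0+l_0)}$, or a logarithm when $k_0+l_0=0$) and \eqref{integral2} when $x_n^2\ge t$; the outer region ($\ga=l_0$), after the extra factor $r^{n-1}(|x'|^2+x_n^2)^{-(n+k_0)/2}$, yields \eqref{integral4} when $x_n^2\ge t$, and when $x_n^2\le t$ it yields \eqref{integral3} for $l_0=0$ (since $2\sqrt t\le c(x_n^2+t)^{1/2}$) and for $l_0=1$ (the logarithm).

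\emph{The main obstacle.} The one case the above does not settle is $l_0\ge2$, $x_n^2\le t$, $|x'|\ge2r$ in \eqref{integral3}: then the contribution $2x_n^{1-l_0}$ from $\tau<x_n^2$ is \emph{larger} than the asserted $(x_n^2+t)^{(1-l_0)/2}$ once $x_n^2\ll t$, so differentiating $K_{ij}$ more than once in $x_n$ is too crude. I would circumvent this using the equations themselves: $D_{x_n}u^n=-\sum_{j<n}D_{x_j}u^j$ by incompressibility, and for $i<n$, $D_{x_n}^2u^i=D_tu^i-\De_{x'}u^i+D_{x_i}p$ (which involves only the tangential component $D_{x_i}p$, $i<n$); iterating these, $D_{x_n}^{l_0}u^i$ becomes a combination of terms with at most one $x_n$-derivative on $u$, already controlled above, plus tangential and time derivatives of $p$. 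The latter are read off from \eqref{representation}: the instantaneous ($\de$-)part of $\pi_j$ is an elliptic convolution of the atom against $D^2_xE$ and further $x$-derivatives, bounded by \eqref{0824} as before, and the part built from $A$ obeys \eqref{inequality11} --- with one extra integration by parts in $s$ needed for the terms of $\pi_j$ carrying a $\partial_t$, so that their $\tau$-integral converges. The factors of $(x_n^2+t)$ that appear to be missing are recovered from the inequalities $x_n^2\le t\le 4r^2\le|x'|^2$ valid throughout this regime. Carrying out this pressure bookkeeping cleanly is the only genuinely delicate point; everything else is the routine computation of Steps 1--3.
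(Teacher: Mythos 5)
Your Steps 1--3 reproduce the paper's proof almost verbatim: the time derivatives are moved onto the atom (the paper does this by appealing to uniqueness of the solution with boundary data $D_tg$ rather than by integrating by parts, which spares it the boundary term at $s=t$, where $K(\cdot,x_n,0^{+})$ is not obviously zero for the $G_{ij}$ part), the kernel bound \eqref{inequality1} is inserted, the $y'$-integral is handled by \eqref{0824} in the outer region and by the full-space integral in the inner region, and the remaining one-dimensional integral $\int_0^t s^{-\frac12}(x_n^2+s)^{-\te_2}\,ds$ is evaluated by splitting at $s=x_n^2$, exactly as in \eqref{0822}--\eqref{0822-1}. Every case you close is closed correctly and matches the paper.

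The ``main obstacle'' you isolate is genuine, and you should know that the paper's own proof does not overcome it either: for $|x'|\ge 2r$, $x_n^2\le t$ and $l_0\ge 2$, the paper simply takes $\te_2=\frac{l_0}{2}>\frac12$ in \eqref{0822-1}, which yields $x_n^{1-l_0}$, and then asserts \eqref{integral3}; since $x_n^{1-l_0}\ge (x_n^2+t)^{\frac{1-l_0}{2}}$ in this regime, the stated bound $\eqref{integral3}_2$ for $l_0\ge2$ is strictly stronger than what the argument delivers. So you have detected a defect in the lemma as stated rather than in your own computation. Your proposed repair via $div\,u=0$ and the equation for $D_{x_n}^2u^i$ is plausible but heavy (it drags in the pressure, whose kernel $\pi_j$ contains a $\delta(t)$ term and a $\partial_tA$ term each needing separate treatment), and it is also unnecessary: the weaker bound suffices everywhere it is used. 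In the only place $\eqref{integral3}_2$ with $l_0\ge 2$ enters, namely the term $I_{31}$ in the proof of Theorem \ref{Rn}, the integrand carries the weight $x_n^{\be}$ with $\be=-\al p+(k_0+l_0+2m_0)p-1$, and $\be+(1-l_0)p=(1+k_0+2m_0-\al)p-1>-1$, so replacing $(x_n^2+t)^{\frac{(1-l_0)p}{2}}$ by $x_n^{(1-l_0)p}$ leaves the $x_n$-integration convergent and produces the same power of $t$. The clean resolution is therefore to restate $\eqref{integral3}_2$ for $l_0\ge 2$ with $x_n^{1-l_0}$ in place of $(x_n^2+t)^{\frac{1-l_0}{2}}$, which is what both your argument and the paper's actually prove; no pressure bookkeeping is required.
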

\begin{proof}
By the uniqueness of the  solution of the initial-boundary value
problem, we have $D_t u(x,t) = \int_0^t \int_{\Rn}    K(x'-y', x_n,
t-s) D_s a(y',s) dy'ds$. This implies
\begin{align}\label{form}
D_{x_n}^{l_0} D_{x'}^{k_0} D^{m_0}_t u^i(x,t)
= \int_0^t \int_{\Rn}   D_{x_n}^{l_0} D_{x'}^{k_0} K_{ij} (x' -y', x_n, t-s) D^{m_0}_s a (y',s) dy'ds.
\end{align}
Since $t\leq (2r)^2$, from the estimate \eqref{inequality1} of $K_{ij}$ and \eqref{atom}, and taking
$\te_1: = \frac{n+ k_0}2 > \frac{n-1}2$ in  \eqref{0824}, we have
\begin{align}
\label{integral-1}
& \notag|D_{x_n}^{l_0} D_{x'}^{k_0} D^{m_0}_t  u ^i(x,t)| \\
\notag&\leq c r^{\al -\frac{n+1}p-2m_0 } \int_0^{t}\int_{|y'|< r}
 (t-s)^{-\frac12} (|x'-y'|^2 + x_n^2+ t-s)^{\frac{ -n- k_0}2}  (x_n^2 + t-s)^{-\frac{l_0}2}dy'ds\\
 \notag&\leq c r^{\al -\frac{n+1}p-2m_0 } \int_0^{t}\int_{|y'|<r}
 s^{-\frac12} (|x'-y'|^2 + x_n^2+ s)^{\frac{ -n- k_0}2} (x_n^2 + s)^{-\frac{l_0}2}dy'ds\\
 & \leq
 \left\{\begin{array}{ll} \vspace{2mm}
 c r^{\al -\frac{n+1}p-2m_0+n-1}  (  |x'|^2 + x_n^2)^{\frac{ -n- k_0}2} \int_0^{t}s^{-\frac12} (x_n^2 + s)^{-\frac{l_0}2} ds
                     &\mbox{if }|x'|\geq 2r ,\\
 c r^{\al -\frac{n+1}p-2m_0 } \int_0^{t}s^{-\frac12} (x_n^2 + s)^{\frac{ -n- k_0 -1}2} ds &\mbox{if }|x'|\leq 2r.
 \end{array}\right.
\end{align}
Note that for $\te_2\geq 0$,  if $x_n^2\geq t$, then
\begin{align}\label{0822}
\int_0^{t}s^{-\frac12} ( x_n^2 + s)^{- \te_2 } ds
\leq c x_n^{-2 \te_2 }\int_0^{t}s^{-\frac12} ds
\leq c x_n^{-2\te_2} t^\frac12,
\end{align}
and if $x_n^2\leq t$, then
\begin{align} \label{0822-1}
\notag \int_0^{t}s^{-\frac12} (x_n^2 + s)^{-\te_2 } ds
&\leq x_n^{-2\te_2 }\int_0^{x_n^2}s^{-\frac12}ds+\int_{x_n^2}^t s^{-\frac12 - \te_2 } ds\\
&\leq \left\{\begin{array}{ll} \vspace{2mm}
ct^{-\te_2 +\frac12} &\mbox{ for }  \te_2 < \frac12, \\\vspace{2mm}
cx_n^{-2\te_2 +1} &\mbox{ for } \te_2 > \frac12, \\
 c \ln( 1 + {\frac{t}{x_n^2}}) &\mbox{ for } \te_2 = \frac12.
\end{array}\right.
\end{align}
Taking $\te_2: = \frac{k_0+ l_0 +1}2 \geq \frac12 $ in \eqref{0822} - \eqref{0822-1} and applying to the right hand side of
 $\eqref{integral-1}_2$, we  obtain the estimates $\eqref{integral1}$  and \eqref{integral2}.
And taking $\te_2: = \frac{l_0}2 \geq 0 $ in \eqref{0822} -
\eqref{0822-1} and applying to the right hand side of
 $\eqref{integral-1}_1$,  we  obtain the estimates
$\eqref{integral3}$ and \eqref{integral4}.
\end{proof}

\begin{lemm}\label{derivative estimate1}
Let $ t \le (2r)^2$ and $|x'| \leq 2r$. Let $k_0\geq 1$.
\begin{enumerate}
\item
For  $x_n^2\leq t$ we have
\begin{align}
\label{a1}
|D^{k_0}_{x'} D_{x_n}^{l_0} u(x,t)| & \leq
\left\{\begin{array}{ll}
cr^{\al-\frac{n+1}p -1}  \ln ( 1 + {\frac{t}{x_n^2}} )&\mbox{if }l_0+k_0=1,\\
cr^{\al-\frac{n+1}p -1}x_n^{1-k_0-l_0}&\mbox{if }l_0+k_0\geq 2.
\end{array}\right.
\end{align}
\item
For   $  t \leq x_n^2$,  we have
\begin{align}
\label{a2}
|D^{k_0}_{x'} D_{x_n}^{l_0} u(x,t)|
& \leq
 \left\{\begin{array}{ll}  cr^{\al -\frac{n+1}p-1}   x_n^{-( k_0 +l_0 )   }t^{\frac{1}{2}}& x_n \leq r,\\
cr^{\al -\frac{n+1}p+n-2}   x_n^{-( n-1 + k_0 +l_0 )   }t^{\frac{1}{2}}& x_n \geq r.
 \end{array}
 \right.
\end{align}
\end{enumerate}
\end{lemm}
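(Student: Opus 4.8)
The idea is that, since $k_0\ge 1$, one can transfer one tangential derivative from the kernel $K_{ij}$ onto the atom $a$ by an integration by parts in $y'$, and then integrate the (now less singular) kernel over all of $\Rn$; the first operation replaces a factor of decay $\sim |x|^{-1}$ of $K_{ij}$ by the bound $|D_{y'}a|\le r^{\al-\frac{n+1}{p}-1}$, and the second avoids the volume factor $r^{n-1}$. The net effect is precisely to upgrade the estimates of Lemma \ref{Lemma-1} by turning a factor $x_n^{-1}$ into $r^{-1}$, which is what one needs since $x_n^{-1}$ fails to be integrable near $x_n=0$.

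To carry this out, I would argue as in the proof of Lemma \ref{Lemma-1}: differentiate \eqref{sample1} in the spatial variables under the integral sign, and, using that $K_{ij}(x'-y',x_n,t-s)$ depends on $y'$ only through $x'-y'$, write one of the factors $D_{x'}$ as $-D_{y'}$ acting on $K_{ij}$ and integrate by parts in $y'$; no boundary term appears because $supp\, a(\cdot,s)\subset\De(0,r)$. This yields, for $1\le i\le n$,
\[
D^{k_0}_{x'}D^{l_0}_{x_n}u^i(x,t)=\int_0^t\!\!\int_{\Rn}D^{l_0}_{x_n}D^{k_0-1}_{x'}K_{ij}(x'-y',x_n,t-s)\,D_{y'}a(y',s)\,dy'\,ds .
\]
Applying \eqref{inequality1} with $k_0$ replaced by $k_0-1$ (and $m_0=0$), the bound $|D_{y'}a|\le r^{\al-\frac{n+1}{p}-1}$ together with $supp\, a\subset\De(0,r)\times(0,r^2)$, and substituting $s\mapsto t-s$ exactly as in the proof of Lemma \ref{Lemma-1}, one obtains
\[
|D^{k_0}_{x'}D^{l_0}_{x_n}u(x,t)|\le c\,r^{\al-\frac{n+1}{p}-1}\int_0^{t}s^{-\frac12}\Big(\int_{|y'|<r}(|x'-y'|^2+x_n^2+s)^{-\frac{n+k_0-1}{2}}\,dy'\Big)(x_n^2+s)^{-\frac{l_0}{2}}\,ds .
\]

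It then remains to estimate the bracketed integral and afterwards the $s$-integral, distinguishing $x_n\le r$ from $x_n\ge r$. For $x_n\le r$ I would bound the bracket by the integral over all of $\Rn$,
\[
\int_{|y'|<r}(|x'-y'|^2+x_n^2+s)^{-\frac{n+k_0-1}{2}}\,dy'\le\int_{\Rn}(|z'|^2+x_n^2+s)^{-\frac{n+k_0-1}{2}}\,dz'=c\,(x_n^2+s)^{-\frac{k_0}{2}},
\]
the last integral converging exactly because $n+k_0-1>n-1$, i.e.\ because $k_0\ge 1$ --- this is where the hypothesis enters. The remaining integral $\int_0^t s^{-\frac12}(x_n^2+s)^{-\frac{k_0+l_0}{2}}\,ds$ is then handled by \eqref{0822-1} (for $x_n^2\le t$, with $\te_2=\frac{k_0+l_0}{2}\ge\frac12$), which gives \eqref{a1}, the logarithm arising precisely when $\te_2=\frac12$, i.e.\ $k_0=1$, $l_0=0$; and by \eqref{0822} (for $t\le x_n^2$, with $\te_2=\frac{k_0+l_0}{2}\ge 0$), which gives the first line of \eqref{a2}. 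For $x_n\ge r$, the hypothesis $t\le(2r)^2$ forces $t-s\le 4r^2\le 4x_n^2$ and $|x'-y'|^2\le 9r^2\le 9x_n^2$ when $|x'|\le 2r$, $|y'|<r$, so $|x'-y'|^2+x_n^2+s\asymp x_n^2$; hence the bracket is $\le c\,r^{n-1}x_n^{-(n+k_0-1)}$, while $(x_n^2+s)^{-l_0/2}\le x_n^{-l_0}$ and $\int_0^t s^{-1/2}\,ds=2t^{1/2}$, which gives the second line of \eqref{a2}.

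The one substantive point is the reduction in the second paragraph --- recognizing that one must both integrate by parts and then integrate the kernel over all of $\Rn$ rather than only over $\De(0,r)$, so as to shed the volume factor $r^{n-1}$ that would otherwise merely reproduce Lemma \ref{Lemma-1} instead of improving it. After that the argument is just the scalar integral estimates \eqref{0822} and \eqref{0822-1} already in hand.
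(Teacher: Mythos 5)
Your proposal is correct and follows essentially the same route as the paper: since $k_0\geq 1$, integrate by parts in $y'$ to place one tangential derivative on the atom, apply the kernel bound \eqref{inequality1} with $k_0-1$, estimate the spatial integral (convergent over all of $\Rn$ precisely because $k_0\geq1$, with the extra factor $(r/x_n)^{n-1}$ when $x_n\geq r$), and finish with the scalar integrals \eqref{0822}--\eqref{0822-1}. The only cosmetic difference is that the paper evaluates the spatial integral by rescaling $\rho\mapsto\rho/\sqrt{x_n^2+s}$ (its estimate \eqref{0907}) rather than by your direct comparison $|x'-y'|^2+x_n^2+s\asymp x_n^2$, but the resulting bounds are identical.
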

\begin{proof}
Since  $k_0 \geq 1$, we have
\begin{align} \label{derivative estimate 0-0-1}
|\notag   D_{x_n}^{l_0} D^{k_0}_{x'} u^i(x,t)|
&  = |\int_0^t \int_{\Rn}
                      D^{k_0 -1}_{x'}D^{l_0}_{x_n} K_{ij}(x'-y', x_n, t-s) D_{y'}a(y',s)dy'ds|\\
\notag&\leq  cr^{\al  -\frac{n+1}p-1} \int_0^t  \int_{|x'-y'|\leq 3r}
(t-s)^{-\frac12} (  x_n^2 + t-s )^{-\frac{l_0}2} \\
& \quad \times ( |x'-y'|^2 +x_n^2  +t-s )^{-\frac{n + k_0-1}2 } dy'ds\\
\notag&=  cr^{\al  -\frac{n+1}p-1} \int_0^t  \int_{|y'|\leq 3r}
s^{-\frac12} ( x_n^2 + s)^{-\frac{l_0}2} ( |y'|^2 + x_n^2 + s )^{-\frac{n + k_0-1}2 } dy'ds.
\end{align}
Note that
\begin{align}\label{0907}
\notag \int_{|x'-y'| \leq 3r}
                   \frac{1}{ (|x' -y'|^2+ x_n^2 + s)^{\frac{n + k_0 -1}2} } dy'
               & = c \int_0^{ 3r}  \frac{\rho^{n-2} }{ (\rho^2+ x_n^2 + s)^{\frac{n+k_0 -1}2}} d\rho\\
&   = c ( x_n^2 + s)^{-\frac{k_0}2}  \int_0^{ \frac{3r}{ \sqrt{x_n^2 + s}   }}  \frac{\rho^{n-2} }{ (\rho^2+ 1)^{\frac{n +k_0 -1}2} } d\rho\\
                   & \notag\leq  \left\{\begin{array}{ll}
                 c ( x_n^2 + s)^{-\frac{k_0}2}   &  r \geq x_n, \\
                 c ( x_n^2 + s)^{-\frac{k_0}2}   ( \frac{r}{ x_n  } )^{n-1} &  r \leq  x_n.
        \end{array}
        \right.
\end{align}
Taking $\te_2: = \frac{k_0 + l_0}2 \geq \frac12$ in \eqref{0822} - \eqref{0822-1} ,
we  obtain the estimates $\eqref{a1}$ and $\eqref{a2}$.
\end{proof}
\begin{lemm}\label{lemma2}
Let $t \leq (2r)^2$ and $ |x'| \leq 2r$.
 Let  $ l_0\geq 1$.
\begin{enumerate}
\item
For  $x_n^2\leq t$ we have
\begin{align}
\label{a5}
|D_{x_n}^{l_0} u(x,t)|
 &\leq\left\{\begin{array}{ll} \vspace{2mm}
cr^{\al -\frac{n+1}p-1}  \ln (1 + {\frac{r}{x_n}})   \ln ( 1 + {\frac{t}{x_n}}) &\mbox{if }l_0=1,\\
cr^{\al -\frac{n+1}p-1} x_n^{1-l_0}   \ln (1 + { \frac{r}{x_n} }) &\mbox{if }l_0\geq 2.
\end{array}\right.
\end{align}
\item
For  $ t \leq  x_n^2 $, then
\begin{align}
\label{a55}
|  D^{l_0}_{x_n} u (x,t) | \leq c r^{\al -\frac{n+1}p-1 } x_n^{-l_0}t^{\frac{1}{2}}
 \Big(  \chi_{ x_n \leq r}  (x_n)  \ln ( 1 + { \frac{r}{x_n}} )   + \chi_{ x_n \geq r}  (x_n)     rx_n^{-1}      \Big).
\end{align}
\end{enumerate}
\end{lemm}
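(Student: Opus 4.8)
The plan is to split the kernel $K_{ij}=-2\delta_{ij}D_{x_n}\Ga+4G_{ij}$ in \eqref{sample1} and to exploit the two pieces' different structures, since for the pure normal derivatives $D_{x_n}^{l_0}$ (that is, $k_0=m_0=0$) neither the time substitution used for Lemma~\ref{Lemma-1} nor the tangential integration by parts used for Lemma~\ref{derivative estimate1} is available. Write $u^i=v^i+w^i$, where $v^i$ is the contribution of $-2\delta_{ij}D_{x_n}\Ga$ (nonzero only when $i=j\le n-1$) and $w^i$ is the contribution of $4G_{ij}$.

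For $v^i$ I would use that $\Ga$ solves the heat equation, so that $D_{x_n}^{l_0+1}\Ga=D_{x_n}^{l_0-1}\bigl(\pa_t-\sum_{k=1}^{n-1}D_{x_k}^2\bigr)\Ga$. Integrating by parts against $\pa_t$ in the $s$-variable of \eqref{sample1} is legitimate because $a(\cdot,0)=0$ and $D_{x_n}^{l_0-1}\Ga(\cdot,x_n,\tau)\to0$ as $\tau\to0^+$ (here $x_n>0$), and it replaces $a$ by $\pa_sa$, bounded by $cr^{\al-\frac{n+1}p-2}$ via \eqref{atom}; integrating by parts once in $y_k$ against each $D_{x_k}^2$ replaces $a$ by $D_{y_k}a$, bounded by $cr^{\al-\frac{n+1}p-1}$, at the price of one extra tangential derivative on the Gaussian. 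The resulting kernels factor as a product of a (possibly once-differentiated) $(n-1)$-dimensional Gaussian in $y'$ and a (differentiated) one-dimensional Gaussian in $x_n$, and the surviving derivative of $a$ is still supported in $\De(0,r)$, so I may enlarge the $y'$-integral to all of $\Rn$; the inner integral then collapses to $c\,\tau^{-\beta/2}e^{-x_n^2/(8\tau)}$ with $\beta\in\{l_0,l_0+1\}$, giving $|D_{x_n}^{l_0}v^i(x,t)|\le c\,r^{\al-\frac{n+1}p-1}\int_0^t\tau^{-(l_0+1)/2}e^{-x_n^2/(8\tau)}\,d\tau+c\,r^{\al-\frac{n+1}p-2}\int_0^t\tau^{-l_0/2}e^{-x_n^2/(8\tau)}\,d\tau$. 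The elementary integrals $\int_0^t\tau^{-\beta/2}e^{-x_n^2/(8\tau)}\,d\tau$ are comparable to $t^{1-\beta/2}$, to $\ln(1+t/x_n^2)$, or to $x_n^{2-\beta}$ according as $\beta<2$, $\beta=2$, or $\beta>2$ (with the extra smallness $(t/x_n^2)^N$ when $t\le x_n^2$, from the Gaussian), and combined with $t\le(2r)^2$ these remain within the bounds \eqref{a5}, \eqref{a55}.

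For $w^i$, write $G_{ij}=D_{x_j}H_i$ with $H_i(x,t)=\int_0^{x_n}\int_{\Rn}D_{z_n}\Ga(z,t)D_{x_i}E(x-z)\,dz$ and $1\le j\le n-1$. Since $D_{x_j}$ is tangential, I would integrate by parts in $y_j$ in \eqref{sample1}, moving it onto $a$ and gaining the factor $r^{\al-\frac{n+1}p-1}$ from $|D_{y_j}a|$. The kernel $H_i$ obeys the same kind of anisotropic estimate as $G_{ij}$ in \eqref{inequality11} but with one fewer power of $|x|$, namely $|D_{x_n}^{l_0}H_i(x,t)|\le c\,t^{-1/2}(|x|^2+t)^{-(n-1)/2}(x_n^2+t)^{-l_0/2}$, which comes out of the same computation as in \cite{K,So} (and upon applying $D_{x_j}$ reproduces \eqref{inequality11} with $k_0=1$). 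Inserting this bound and \eqref{atom} and carrying out the $y'$-integral over $|x'-y'|<3r$ by the radial reduction of \eqref{0907} — but now at the \emph{borderline} exponent $\tfrac{n-1}2$, where $\int_0^{3r}\rho^{n-2}(\rho^2+x_n^2+s)^{-(n-1)/2}\,d\rho$ is logarithmic, $\le c\ln\!\bigl(2+r/\sqrt{x_n^2+s}\bigr)$ if $\sqrt{x_n^2+s}\le r$ and $\le c\,(r/\sqrt{x_n^2+s})^{n-1}$ otherwise — leaves $|D_{x_n}^{l_0}w^i|$ bounded by $r^{\al-\frac{n+1}p-1}\int_0^t s^{-1/2}\,[\,\cdots\,]\,(x_n^2+s)^{-l_0/2}\,ds$, which I would estimate using \eqref{0822}--\eqref{0822-1}. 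Integrating the logarithm against $s^{-1/2}(x_n^2+s)^{-l_0/2}$ yields a second logarithm when $l_0=1$ and the power $x_n^{1-l_0}$ when $l_0\ge2$; for $x_n\le r$ this produces \eqref{a5}, while for $x_n\ge r$ the $(r/\sqrt{x_n^2+s})^{n-1}$ branch dominates and supplies the factor $r\,x_n^{-1}$ in \eqref{a55}.

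The delicate step is the bookkeeping for $w^i$, where the spatial and the temporal integrals both sit at the threshold of logarithmic divergence, so iterated logarithms appear and a crude estimate of, say, $\int_0^t s^{-1/2}\ln(2+r/\sqrt{x_n^2+s})(x_n^2+s)^{-1/2}\,ds$ already gives $\ln^2(2+r/x_n)$, which is too large when $t$ barely exceeds $x_n^2$. One has to separate the four regimes $x_n\le r$ / $x_n\ge r$ and $x_n^2\le t$ / $x_n^2\ge t$ and, in each, keep track of which of the scales $x_n$, $\sqrt t$, $r$ cuts off each logarithm, so as to land exactly on the stated products $\ln(1+r/x_n)\ln(1+t/x_n^2)$ (resp.\ $\ln(1+t/x_n)$ as written in \eqref{a5}). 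One also has to check that the Gaussian decay in the $v^i$-estimate is strong enough in the worst regime $x_n\ge r$, and that the auxiliary bound on $D_{x_n}^{l_0}H_i$ really carries the anisotropic gain $(x_n^2+t)^{-l_0/2}$ and not merely $(|x|^2+t)^{-l_0/2}$.
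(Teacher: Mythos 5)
Your proposal is correct, and for the Gaussian piece $v^i=u^i_1$ it is identical to the paper's argument: the identity $D_{x_n}^2\Ga=(\pa_t-\De_{n-1})\Ga$, integration by parts in $s$ and in the tangential variables, and the elementary Gaussian integrals are precisely \eqref{a3}--\eqref{a555}, including the check that $e^{-x_n^2/4t}$ absorbs the worst powers when $t\le x_n^2$. For the $G_{ij}$ piece you take a genuinely different, though dual, route. The paper does not write $G_{ij}=D_{x_j}H_i$ and integrate by parts; it uses the vanishing moment $\int_{\Rn}G_{ij}(y',x_n,t)\,dy'=0$ to replace $a(y',s)$ by $a(y',s)-a(x',s)$, bounds this difference by $r^{\al-\frac{n+1}p-1}|x'-y'|$ on $|y'|\le 3r$ (keeping the crude bound $r^{\al-\frac{n+1}p}$ on $|y'|\ge 3r$, where only $a(x',s)$ survives and \eqref{0822-3} gives $(x_n+r)^{-1}$), and then meets the same borderline spatial integral, which \eqref{0822-2} bounds \emph{uniformly in $s$} by $\ln(1+\frac r{x_n})$; the time integral \eqref{0822}--\eqref{0822-1} then supplies the second factor $\ln(1+\frac t{x_n^2})$, $x_n^{1-l_0}$, or $x_n^{-l_0}t^{1/2}$ directly, so the products in \eqref{a5}--\eqref{a55} fall out with no iterated-logarithm bookkeeping — this also resolves your worry about $\ln^2(2+\frac r{x_n})$, since on the logarithmic branch one necessarily has $\sqrt{x_n^2+s}\le cr$, hence $x_n\le cr$, and the uniform bound by $\ln(1+\frac r{x_n})$ is legitimate there. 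Your tangential integration by parts extracts the same factor $r^{-1}$ from $|D_{x'}a|$ and pleasantly eliminates the far-field term altogether, but it costs you the auxiliary kernel estimate $|D_{x_n}^{l_0}H_i|\le ct^{-1/2}(|x|^2+t)^{-\frac{n-1}2}(x_n^2+t)^{-\frac{l_0}2}$, which is nowhere in the paper and which you only assert follows from \cite{K}, \cite{So}; that is the one ingredient you would need to document (it is at the same level of standardness as \eqref{inequality11}, and indeed the paper's own unproved claim $\int_{\Rn}G_{ij}\,dy'=0$ rests on the same primitive structure $G_{ij}=D_{x_j}H_i$). With that estimate granted, your regime analysis lands on the stated bounds.
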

\begin{proof}
 Decompose
$u^i$ by $u^i = u_1^i + u^i_2$, where
\begin{align}\label{u-1}
u^i_1(x,t) &= -2\de_{ij} \int_0^t \int_{\Rn} D_{x_n} \Ga(x'-y', x_n ,t-s)  a(y',s) dy'ds,\\
\label{u-2}
u^i_2(x,t) &=4 \int_0^t \int_{\Rn}   G_{ij} (x'-y', x_n, t-s) a (y', s) dy' ds.
\end{align}

First, we estimate $D_{x_n}^{l_0} u^i_1$.  Note that  for $x_n>0$, $D_{x_n}^2\Gamma=D_t\Gamma  -\De_{n-1}\Gamma$ in ${\mathbb R}^n_+ \times
(0\infty)$,
 where $\De_{n-1}=\sum_{k=1}^{n-1}D_{x_k}^2$.
Hence, if $l_0\geq 1$, then
\begin{align*}
D_{x_n}^{l_0} u^i_1(x,t) & =  -2\de_{ij} \int_0^t \int_{\Rn} D^{l_0 +1}_{x_n} \Ga(x'-y', x_n ,t-s)  a(y',s) dy'ds\\
& = -2\de_{ij} \int_0^t \int_{\Rn}  ( D_t  -\De_{n-1}) D^{l_0 -1}_{x_n} \Ga(x'-y', x_n ,t-s)  a(y',s) dy'ds\\
& = -2\de_{ij}  \int_0^t \int_{\Rn}  D^{l_0 -1}_{x_n} \Ga(x'-y', x_n ,t-s)  D_s a(y',s) dy'ds\\
& \quad-2\de_{ij}   \int_0^t \int_{\Rn}  D_{x_k} D^{l_0 -1}_{x_n} \Ga(x'-y', x_n ,t-s)   D_{y_k} a(y',s) dy'ds.
\end{align*}
Note that for each multi-index $m $, there is $c(m)> 0$ such that
 $| D_{x}^{ m} \Ga(x ,t)| \leq c(m)  t^{-\frac{n+ |m|}{2}}e^{-\frac{|x|^2 }{2t}}$.
Hence, for $t\leq (2r)^2$ and  $|x'| \leq 2r$, we have
\begin{align}
\label{a3}
\notag |D_{x_n}^{l_0} u^i_1(x,t)|& \leq
 c r^{\al -\frac{n+1}p -2} \int_0^t  \int_{|y'|< r}  (t-s)^{-\frac{n+l_0-1}{2}}e^{-\frac{|x'-y'|^2+x_n^2}{ 2(t-s)}}dy'ds \\
 \notag
   &\quad+cr^{\al -\frac{n+1}p-1} \int_0^t   \int_{|y'|<r}  (t-s)^{-\frac{n+l_0}{2}}e^{-\frac{|x'-y'|^2+x_n^2}{ 2(t-s)}}dy' ds\\
   & \leq
c r^{\al -\frac{n+1}p -2} \int_0^t  \int_{|y'|< 3r} s^{-\frac{n+l_0-1}{2}}e^{-\frac{|y'|^2+x_n^2}{ 2s}}dy'ds \\
\notag
  &\quad + cr^{\al -\frac{n+1}p-1} \int_0^t
 \int_{|y'|< 3r}  s^{-\frac{n+l_0}{2}}e^{-\frac{|y'|^2+x_n^2}{2s}}dy' ds\\
\notag
  & \leq  cr^{\al -\frac{n+1}p-2}   \int_0^t  s^{  -\frac{l_0}{2}}   e^{-\frac{x_n^2  }{ 2s} }ds
                                 +cr^{\al -\frac{n+1}p-1}  \int_0^t  s^{  -\frac{l_0+1}{2}  }  e^{-\frac{x_n^2  }{2s} }ds.
\end{align}
Note that $e^{-\frac{x_n^2  }{2s} } \leq c(\frac{x_n^2}{s  }+1
)^{-\frac{m}{2}}$ for any $m\geq 0$. Hence, for $x_n^2\leq t$ we
have
\begin{align*}
\int_0^t  s^{  -\frac{l_0}{2}  }  e^{-\frac{x_n^2  }{2s} }ds&\leq
c\int^t_0s^{-\frac{1}{2}}(x_n^2+s)^{-\frac{l_0-1}{2}}ds\leq cx_n^{-l_0+1}t^{\frac{1}{2}}\leq cx_n^{-l_0+1}r,\\
\int_0^t  s^{  -\frac{l_0+1}{2}  }  e^{-\frac{x_n^2  }{2s} }ds&\leq
c\int^t_0s^{-\frac{1}{2}}(x_n^2+s)^{-\frac{l_0}{2}}ds
\leq cx_n^{-l_0}\int^{x_n^2}_0s^{-\frac{1}{2}}ds+c\int^t_{x_n^2}  s^{-\frac{l_0+1}{2}}ds\\
 &\leq \left\{\begin{array}{ll}
c\ln ( 1 + {\frac{t}{x_n^2}} ) &\mbox{for }l_0=1,\\
cx_n^{-l_0+1} &\mbox{for }l_0\neq 1.
\end{array}\right.
\end{align*}
For $x_n^2\geq t$ and $a : = \frac{l_0}2$ or $a : = \frac{l_0 +1}2$,  we have
\begin{align*}
\int_0^t  s^{  -a }  e^{-\frac{x_n^2  }{ 2s} }ds&=
 x_n^{-2a +2} \int^\infty_{\frac{x_n^2}t} s^{a-2} e^{-\frac12s} ds
\leq cx_n^{-2a +2}  e^{-\frac{x_n^2}{4t}}.
\end{align*}
Applying the above estimates to the right hand side of \eqref{a3}, we have
for $x_n^2\leq t$
\begin{align}
\label{a4}
|D_{x_n}^{l_0} u^i_1(x,t)|
\leq \left\{\begin{array}{ll}
  cr^{\al -\frac{n+1}p-1} \ln ( 1 +{ \frac{t}{x_n^2}} ) &\mbox{for }l_0=1,\\
cr^{\al -\frac{n+1}p-1} x_n^{-l_0+1} &\mbox{for }l_0\geq 2
\end{array}\right.
\end{align}
and for $x_n^2\geq t$
\begin{align}
\label{a555}
|D_{x_n}^{l_0} u^i_1(x,t)|& \leq
cr^{\al -\frac{n+1}{p}-2} x_n^{-l_0 +2}  e^{-\frac{x_n^2}{4t}}  + r^{\al -\frac{n+1}{p}-1} x_n^{-l_0 +1}  e^{-\frac{x_n^2}{4t}}.
\end{align}

Next, we would like to estimate $D_{x_n}^{l_0}u^i_2$.
Observe that $\int_{\Rn} G_{ij} (y',x_n, t) dy' =0$ for $1 \leq i \leq n, \,\, 1 \leq j \leq n-1$.
Hence,
\begin{align*}
D^{l_0}_{x_n} u_2^i (x,t) &= \int_0^t \int_{\Rn}   D^{l_0}_{x_n}   G(x' -y',x_n, t-s) (a(y',s) - a(x',s) ) dy'ds.
\end{align*}
From the estimate  \eqref{inequality11} of $G_{ij}$, we have that for $t \leq (2r)^2$ and $ |x'| \leq 2r$
\begin{align} \label{a6}
\notag|D^{l_0}_{x_n} u_2^i (x,t)|&\leq c r^{\al -\frac{n+1}p -1} \int_0^t \int_{|y'| \leq 3r}
                   \frac{|x' -y'|}{(t-s)^\frac12 (|x' -y'|^2+ x_n^2 + t-s)^{\frac{n}2} ( x_n^2 + t-s)^{\frac{l_0}2 }} dy'ds\\
& \notag  \quad  +   r^{\al -\frac{n+1}p } \int_0^t \int_{|y'| \geq 3r}
                   \frac{1}{(t-s)^\frac12 (|x' -y'|^2+ x_n^2 + t-s)^{\frac{n}2} ( x_n^2 + t-s)^{\frac{l_0}2 }} dy'ds\\
&\leq c r^{\al -\frac{n+1}p -1} \int_0^t \int_{|x'-y'| \leq 5r}
                   \frac{|x' -y'|}{s^\frac12 (|x' -y'|^2+ x_n^2 + s)^{\frac{n}2} ( x_n^2 + s)^{\frac{l_0}2 }} dy'ds\\
&  \quad +  c r^{\al -\frac{n+1}p } \int_0^t \int_{|x'-y'| \geq r}
                   \frac{1}{s^\frac12 (|x' -y'|^2+ x_n^2 + s)^{\frac{n}2} ( x_n^2 + s)^{\frac{l_0}2 }} dy'ds.\notag
\end{align}
Note that
\begin{align}\label{0822-2}
\notag \int_{|x'-y'| \leq 5r}
                   \frac{|x' -y'|}{ (|x' -y'|^2+ x_n^2 + s)^{\frac{n}2} } dy'
               &\leq c \int_0^{ 5r}  \frac{\rho^{n-1} }{ (\rho^2+ x_n^2 + s)^n} d\rho\\
&   = c \int_0^{ \frac{5r}{ \sqrt{x_n^2 + s}   }}  \frac{\rho^{n-1} }{ (\rho^2+ 1)^{\frac{n}2}} d\rho\\
                   & \notag \leq c \ln ( 1 + {\frac{r}{ x_n}}).
\end{align}
And
\begin{align}\label{0822-3}
\int_{|x'-y'| \geq r}  \frac{1}{ (|x' -y'|^2+ x_n^2 + s)^{\frac{n}2} } dy'
    & \leq  c \int_r^\infty \frac{ 1 }{ ( \rho+ \sqrt{ x_n^2 + s})^2 } d\rho \leq   c (  x_n+  r  )^{-1}.
\end{align}
Applying the \eqref{0822-2} and \eqref{0822-3} to the right hand side of \eqref{a6}, we have
\begin{align}
\label{a7}
|D^{l_0}_{x_n} u_2^i (x,t)|\leq  c r^{\al -\frac{n+1}p -1}
\Big(  \chi_{ x_n \leq r}  (x_n)  \ln ( 1 + { \frac{r}{x_n}} )   + \chi_{ x_n \geq r}  (x_n)     rx_n^{-1}      \Big)
\int_0^t s^{-\frac12 } ( x_n^2 + s)^{-\frac{l_0}2 }
ds.
\end{align}
Taking $\te_2 = \frac{l_0}2 \geq \frac12$ in  \eqref{0822} and \eqref{0822-1}, from \eqref{a7}, we have  that for $x_n^2\leq t\leq (2r)^2$
\begin{align}
\label{a8}
&|D^{l_0}_{x_n} u_2^i (x,t)|\leq   \left\{\begin{array}{ll}
c r^{\al -\frac{n+1}p -1}     \ln( 1 + { \frac{r}{x_n}} )   \ln(1 + { \frac{t}{x^2_n}})
   &\mbox{ for }l_0=1,\\
c r^{\al -\frac{n+1}p -1}     \ln ( 1 + { \frac{r}{x_n}} )
x_n^{1-l_0} &\mbox{ for }l_0\geq 2,
\end{array}\right.
\end{align}
and for $x_n^2\geq t$
\begin{align}
\label{a9}
|D^{l_0}_{x_n} u_2^i (x,t)|\leq c r^{\al -\frac{n+1}p-1}x_n^{-l_0}t^{\frac{1}{2}}
   \Big(  \chi_{ x_n \leq r}  (x_n) ( 1+\ln{ \frac{r}{x_n}} )   + \chi_{ x_n \geq r}  (x_n)     rx_n^{-1}      \Big).
\end{align}
From \eqref{a4}, \eqref{a555},  \eqref{a8} and \eqref{a9}, we obtain \eqref{a5} and \eqref{a55}.
\end{proof}

\section{ Estimates of  solution in  $ {\mathbb R}^n_+ \times (0,\infty) $}\label{anisotropicsection}

\setcounter{equation}{0}

\begin{theo}\label{Rn}
Let $1< p<\infty$ and  $0<\alpha<1 $.
Let  $g=(g',0) \in {\mathcal B}^{\al,\frac12\al }_p ({\mathbb R}^{n-1}
\times (0,\infty))$ with $g(x', 0) =0$ if $\al > \frac2p$.
Then there is a solution $u$ of the Stokes system \eqref{maineq2} in ${\mathbb R}^{n-1}
\times (0,\infty)$ with boundary data $g$ such that
\begin{align}
\label{m3}
\int_0^\infty \int_{{\mathbb R}^n_+ }(x_n \wedge t^\frac12)^{-\al p+( k_0 +l_0 + 2m_0 )p-1}
                        |D_{x_n}^{l_0} D_{x'}^{k_0} D_t^{m_0}  u(x,t)|^pdxdt
\leq c \| g\|^p_{{\mathcal B}^{\al, \frac\al2 }_p(\Rn \times (0,\infty) )}
\end{align}
 for $ k_0 + l_0 + 2m_0  =1,2,3.$

\end{theo}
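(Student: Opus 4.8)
The plan is to reduce, via the atomic decomposition of Section~\ref{atom decompositionsection}, to the case where a single component of $g'$ is one $(\al,p)$--atom, to estimate the corresponding solution from \eqref{representation} by means of the pointwise bounds of Section~\ref{pointwise}, and then to sum. When $\al\le\frac2p$ the extension of $g'$ by zero from $\Rn\times(0,\infty)$ to $\Rn\times{\mathbb R}$ has comparable ${\mathcal B}^{\al,\frac12\al}_p$--norm automatically; when $\al>\frac2p$ the hypothesis $g(x',0)=0$ is precisely what makes this zero extension admissible, so in either case Proposition~\ref{atom decomposition} applies. Writing $g_j=\sum_k c_{j,k}\,a_{j,k}$ with $\big(\sum_{j,k}|c_{j,k}|^p\big)^{1/p}\le c\,\|g\|_{{\mathcal B}^{\al,\frac12\al}_p}$ and setting $u=\sum_{j,k}c_{j,k}\,u_{j,k}$, where $u_{j,k}$ is the solution \eqref{representation} produced by $a_{j,k}$ placed in the $j$--th slot, the a~priori bound obtained below shows the series converges in the space on the left of \eqref{m3}, and the representation formula identifies $u$ as a solution of \eqref{maineq2} with boundary data $g$.

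For the per--atom estimate, a translation in $(x',t)$ lets me assume $\operatorname{supp}a\subset\De(0,r)\times(0,r^2)$, so that $u$ is given by \eqref{sample1}; I then claim the left side of \eqref{m3} is bounded by an absolute constant (a single atom has ${\mathcal B}^{\al,\frac12\al}_p$--norm $\lesssim1$). I would partition $\R_+\times(0,\infty)$ into exactly the regions appearing in Lemmas~\ref{Lemma-0}--\ref{lemma2}: first $\{t\ge(2r)^2\}$, and, within $\{t\le(2r)^2\}$, the four sub-regions $|x'|\gtrless2r$ and $x_n^2\gtrless t$, subdivided further into $x_n\gtrless r$ where Lemma~\ref{lemma2} needs it. On each region one inserts the relevant pointwise bound for $D_{x_n}^{l_0}D_{x'}^{k_0}D_t^{m_0}u$ into $\int\!\!\int(x_n\wedge t^{1/2})^{-\al p+(k_0+l_0+2m_0)p-1}|D_{x_n}^{l_0}D_{x'}^{k_0}D_t^{m_0}u|^p\,dx\,dt$ and carries out the $x_n$--, $t$-- and $|x'|$--integrations in turn; each contribution collapses to a product of elementary power integrals, the powers of $r$ cancel (the whole quantity is parabolically scale invariant), and what survives is a constant depending only on $n,p,\al$. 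Checking that these power integrals converge is the heart of the proof: convergence at $x_n=0$ and at $t=0$ forces a handful of inequalities on the exponents, and these hold precisely because $0<\al<1$ and $N:=k_0+l_0+2m_0\in\{1,2,3\}$. In the delicate subcases with $m_0=0$ and $k_0+l_0\ge1$ the crude bound of Lemma~\ref{Lemma-1} is too weak near $x_n=0$, and one must instead use the sharper estimates of Lemmas~\ref{derivative estimate1} and \ref{lemma2}, which gain a factor $x_n$; the logarithmic factors carried by those estimates are harmless, being absorbed by the strictly positive power margins $p(1-\al)$ (when $m_0=0$) or $p(2-\al)-1$ (when $m_0=1$).

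Finally, one sums over $j,k$. Lemma~\ref{Lemma-0} exhibits the decay of $u_{j,k}$ in the parabolic distance from $\De(y'_{j,k},r_{j,k})\times\{x_n=0\}\times\{t=t_{j,k}\}$ — of order $n+1+N$ after renormalizing by the atom size — which for the derivatives with $N\ge1$, together with the decay of the weight, gives bounded overlap of the near--field parts at each fixed scale and geometric decay of the far--field tails across scales; this is enough to run a standard Fefferman--Stein--type (maximal function / almost orthogonality) summation. The outcome is $\int\!\!\int(x_n\wedge t^{1/2})^{-\al p+Np-1}|D_{x_n}^{l_0}D_{x'}^{k_0}D_t^{m_0}u|^p\lesssim\sum_{j,k}|c_{j,k}|^p\lesssim\|g\|_{{\mathcal B}^{\al,\frac12\al}_p}^p$, which is \eqref{m3}. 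I expect the per--atom step to be the main difficulty: the several multi--indices $(k_0,l_0,m_0)$ with $N\in\{1,2,3\}$, each split over many regions, produce a long case analysis in which the weight exponent has to be balanced against the boundary and initial--time singularities of the solution, and it is precisely there that the refined estimates of Section~\ref{pointwise} are indispensable.
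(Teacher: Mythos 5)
Your proposal follows essentially the same route as the paper: reduce to a single $(\al,p)$-atom via Proposition \ref{atom decomposition}, split $\R_+\times(0,\infty)$ into exactly the regions $t\gtrless (2r)^2$, $|x'|\gtrless 2r$, $x_n^2\gtrless t$ (and $x_n\gtrless r$ where needed), insert the pointwise bounds of Lemmas \ref{Lemma-0}--\ref{lemma2} — including the refined ones for $m_0=0$, $k_0+l_0\ge 1$ and the absorption of the logarithms via \eqref{077} — and observe that each piece reduces to convergent power integrals whose $r$-powers cancel. Your outline matches the paper's case analysis, and you are in fact more explicit than the paper about why the per-atom bounds can be summed to an $\ell^p$ bound on the coefficients.
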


\begin{proof}

By Proposition \ref{atom decomposition}, it is sufficient to
consider $g$ which consists of  the  $(\al,p)$ atoms. For
simplicity, assume  $g_k=a\delta_{kj}, j=1,\cdots, n-1$ for some
atom $a$ supported on $\Delta(0, r^2) \times (0, r^2)$. We denote
$\be = -\al p+( k_0 +l_0 + 2m_0 )p-1   $. Let  $u$ be defined by
\eqref{sample1}. Decompose the domain of integration into four parts
so that
\begin{align*}
  \int_0^\infty  \int_0^\infty\int_{\Rn } (x_n \wedge t^\frac12)^\be
             | D^{l_0}_{x_n} D^{k_0}_{x'} D^{m_0}_t  u|^p dx' dx_n dt=\sum_{i=1}^4I_i,
             \end{align*}
             where
             \begin{align*}
               I_1&= \int_0^{(2 r)^2} \int_0^\infty  \int_{|x'| \leq 2r} (x_n \wedge t^\frac12)^\be
                           | D^{l_0}_{x_n} D^{k_0}_{x'} D^{m_0}_t  u|^p dx' dx_n dt,\\
                           I_2&=\int_{(2 r)^2}^\infty  \int_0^\infty  \int_{|x'| \leq 2r} (x_n \wedge t^\frac12)^\be
             | D^{l_0}_{x_n} D^{k_0}_{x'} D^{m_0}_t  u|^p dx' dx_n dt, \\
  I_3&=  \int_0^{(2  r)^2} \int_0^\infty  \int_{|x'| \geq 2r} (x_n \wedge t^\frac12)^\be
             | D^{l_0}_{x_n} D^{k_0}_{x'} D^{m_0}_t  u|^p dx' dx_n dt,\\
 I_4&=  \int_{(2  r)^2}^\infty  \int_0^\infty  \int_{|x'| \geq 2r} (x_n \wedge t^\frac12)^\be
             | D^{l_0}_{x_n} D^{k_0}_{x'} D^{m_0}_t  u|^p dx' dx_n dt.
\end{align*}

$\bullet$
 From the estimate $\eqref{EQ1}_1$,    we have
\begin{align}\label{0721-1}
I_{4} & \leq  cr^{p \al -n-1+(n+1)p } \int_{(2  r)^2}^\infty \int^\infty_0  \int_{|x'| \geq 2r    }(x_n\wedge t^\frac12)^\be
\notag \\
&\qquad\qquad\qquad \times
            t^{-(\frac12 +m_0) p}(x_n^2 +t )^{-\frac{l_0p}{2}} ( |x'|^2 + x_n^2+t)^{-\frac{(n+k_0)p}{2}}dx'dx_ndt \notag \\
& \leq   cr^{p \al -n-1+ (n+1)p } \int_{(2  r)^2}^\infty \int^\infty_0(x_n \wedge t^\frac12)^\be
 t^{-(\frac12 +m_0) p}(x_n^2 +t)^{-\frac{(n+k_0+l_0)p}{2}+\frac{n-1}{2}}dx_ndt.
\end{align}
Note that for $\te_1  > -1 $ and $ \te_2 > \frac12 $, we have
\begin{align} \label{0721-2}
 \notag \int^\infty_0 (x_n \wedge t^\frac12)^{\te_1 }(x_n^2 +t )^{-\te_2}dx_n
&\leq t^{-\te_2}\int_0^{t^{\frac{1}{2}}} x_n^{\te_1}dx_n
+t^{\frac{\te_1}{2} }\int^\infty_{t^{\frac{1}{2}}} x_n^{-2 \te_2}dx_n\\
&=ct^{ -\te_2 +\frac12 \te_1 + \frac12}.
\end{align}
Hence, taking $\te_1 : = \be >-1$ and $\te_2 :  = \frac{(n+k_0+l_0)p}{2}-\frac{n-1}{2} > \frac12  $ in \eqref{0721-2}, from \eqref{0721-1}, we have
\begin{align}\label{1004-1}
I_{4}
& \leq   cr^{p \al -n-1+(n+1) p } \int_{(2  r)^2}^\infty t^{\frac{n-1}{2}-\frac{(n+\al+1)p}{2}}dt=c\quad \mbox{ for } \al<k_0+l_0+2m_0\mbox{ and }p>1.
\end{align}

$\bullet$
 From the estimate $\eqref{EQ1}_2$,    we have
\begin{align*}
I_{2} & \leq  cr^{p \al -n-1+(n+1) p } \int_{(2 r)^2}^\infty  \int_0^\infty  \int_{|x'| \leq 2r}(x_n\wedge t^\frac12)^\be
                       t^{-(\frac12 +m_0) p} (x_n^2 +t)^{-\frac{(n+k_0+l_0)p}{2}}dx'dx_ndt\\
&=cr^{p \al -2+(n+1) p} \int_{(2 r)^2}^\infty  \int_0^\infty (x_n\wedge t^\frac12)^\be
                                       t^{-(\frac12 +m_0) p} (x_n^2 +t)^{-\frac{(n+k_0+l_0)p}{2}} dx_ndt.
\end{align*}
Hence, taking $\te_1 : = \be > -1$ and $\te_2: = \frac{ n + k_0 + l_0 }{2} > \frac12$ in \eqref{0721-2}, we have
\begin{align}\label{1004-2}
I_{2}
& \leq   cr^{p \al -2+p(n+1) } \int_{(2  r)^2}^\infty t^{-\frac{(n+\al+1)p}{2}}dt=c.
\end{align}

$\bullet$
For the estimate of $I_1$, we divide the domain of integration so that
\begin{align}\label{1004-3}
I_1=I_{11}+I_{12},
\end{align}
where
\begin{align*}
I_{11}= \int_0^{(2 r)^2} \int_0^{t^{\frac{1}{2}}} \int_{|x'| \leq 2r} x_n^\be
                           | D^{l_0}_{x_n} D^{k_0}_{x'} D^{m_0}_t  u|^p dx' dx_n dt,\\
                           I_{12}= \int_0^{(2 r)^2} \int_{t^{\frac{1}{2}}}^\infty \int_{|x'| \leq 2r}   t^{\frac12 \be}
                           | D^{l_0}_{x_n} D^{k_0}_{x'} D^{m_0}_t  u|^p dx' dx_n dt.
\end{align*}

First we estimate $I_{12}$.\\
 \emph{1). Let $m_0\geq 1$.}
  From the estimate  \eqref{integral2} we have
\begin{align*}
I_{12}
&\leq  cr^{\al p-n-1-2m_0 p}\int_0^{(2 r)^2} \int^\infty_{t^{\frac{1}{2}}}  \int_{|x'| \leq 2r}  t^{\frac12 \be}
                                          x_n^{-( k_0+l_0+1)p }t^{\frac{p}{2}} dx' dx_n dt\\
                          &= cr^{\al p-2-2m_0 p} \int_0^{(2r)^2} \int^\infty_{t^{\frac{1}{2}}}
                          t^{\frac12 \be + \frac{p}2}
                          x_n^{- (k_0+l_0+1)p }  dx_n dt\\
&\leq  cr^{\al p-2-2m_0p}\int_0^{(2 r)^2}t^{-\frac{\al
p}{2}+m_0p}dt=c \qquad \mbox{ for }\al<2m_0+\frac{2}{p}.
\end{align*}

\emph{2). Let  $m_0=0$ and $k_0\geq 1$.}
From the estimate  \eqref{a2} we have
\begin{align*}
I_{12}&\leq  cr^{\al p-n-1- p}\int_0^{(2 r)^2} \int^r_{t^{\frac{1}{2}}}  \int_{|x'| \leq 2r} t^{\frac12 \be}
                          x_n^{- ( k_0+l_0)p }t^{\frac{p}{2}} dx' dx_n dt\\
&    \quad +  cr^{\al p-n-1 + (n-2)p}\int_0^{(2 r)^2} \int^\infty_r  \int_{|x'| \leq 2r} t^{\frac12 \be}
                          x_n^{- (  n-1 + k_0+l_0)p }t^{\frac{p}{2}} dx' dx_n dt\\
                          &\leq cr^{\al p-2- p} \int_0^{(2r)^2} \int^r_{t^{\frac{1}{2}}}
                          t^{\frac12 \be + \frac{p}2}   x_n^{- (k_0+l_0)p }  dx_n dt\\
&    \quad +  cr^{\al p-2 + (n-2)p}\int_0^{(2 r)^2} \int^\infty_r    t^{\frac12 \be + \frac12p}
                          x_n^{- (  n-1 + k_0+l_0)p }   dx_n dt\\
&\leq  cr^{\al p-2- p}\int_0^{(2 r)^2}t^{-\frac{\al p}{2}+\frac{p}{2}}dt
            +  cr^{\al p-( 1 + k_0 + l_0) p -1}\int_0^{(2 r)^2}    t^{\frac12 \be + \frac12p}
                          dt\\
&=c \qquad \mbox{ for }\al<1+\frac{2}{p}.
\end{align*}

\emph{3). Let $m_0=k_0=0$ and $l_0\geq 1$.}
From the estimate  \eqref{a55} we have
\begin{align}\label{0907-1}
\begin{array}{ll}\vspace{2mm}
I_{12}&\leq  cr^{\al p-n-1-p}\int_0^{(2 r)^2} \int^r_{t^{\frac{1}{2}}}  \int_{|x'| \leq 2r} t^{\frac12 \be}
                        x_n^{-l_0 p}    \ln^p ( 1 + { \frac{r}{x_n}})  t^{\frac{p}{2}}dx' dx_n dt\\ \vspace{2mm}
    & \quad + cr^{\al p-n-1}\int_0^{(2 r)^2} \int^\infty_r  \int_{|x'| \leq 2r} t^{\frac12 \be}
                        x_n^{-l_0 p}  x_n^{-p} t^{\frac{p}{2}}dx' dx_n dt\\     \vspace{2mm}
       &\leq  cr^{\al p-p-2} \int_0^{(2 r)^2}  \int^r_{t^{\frac{1}{2}}}  t^{\frac12 \be +\frac{p}2 }
                                  x_n^{-l_0 p}  \ln^p ( 1 + { \frac{r}{x_n}}) dx_n dt\\
    & \quad + cr^{\al p-2}\int_0^{(2 r)^2} \int^\infty_r    t^{\frac12 \be + \frac{p}{2}}
                        x_n^{-( l_0 +1) p}  dx_n dt.
\end{array}
\end{align}
Here
\begin{align}\label{0907-2}
\int_0^{(2 r)^2} \int^\infty_r    t^{\frac12 \be + \frac{p}{2}} x_n^{-( l_0 +1) p}  dx_n dt
   = c  r^{-(l_0 +1)p +1} \int_0^{(2 r)^2}     t^{\frac12 \be + \frac{p}{2}}   dt = c r^{-\al p +2}.
\end{align}
Note that for $0< \ep  < 1$ there is a positive constant $c(\ep) >0$ such that for $a > 0$
 \begin{equation}
 \label{077}\ln ( 1 + a)  \leq c(\ep)a^\ep.
 \end{equation}
Hence, taking $\ep >0$ sufficiently small  in \eqref{077} ($ \ep < 1 + \frac2p -\al$),    we have
 \begin{align} \label{0907-3}
 \begin{array}{ll} \vspace{2mm}
 \int_0^{(2 r)^2}  \int^r_{t^{\frac{1}{2}}}  t^{\frac12 \be +\frac{p}2 }
                          x_n^{-l_0 p}  \ln^p ( 1 + { \frac{r}{x_n}}) dx_n dt &\leq  c(\ep)    \int_0^{(2 r)^2}  \int_{t^\frac12}^{ r}
       t^{\frac12 \be +\frac{p}2}
                          x_n^{-l_0 p   }  ( \frac{r}{x_n})^{\ep p}   dx_n dt        \\  \vspace{2mm}
& \leq  c (\ep) \int_0^{(2 r)^2}t^{ \frac12\be + \frac{p}2}   r^{\ep p}   t^{-\frac{l_0}2p + \frac12 -\frac12 \ep p}   dt \\
& =c(\ep) r^{ -\al p+p +2 }.
\end{array}
\end{align}
From \eqref{0907-1}, \eqref{0907-2} and \eqref{0907-3},  we obtain
\begin{align}\label{0907-4}
I_{12} \leq c.
\end{align}

Second, we estimate $I_{11}$.\\
 \emph{1). Let  $m_0\geq 1$ and $k_0+l_0\geq 1$.} Then from the estimate $\eqref{integral1}_1$ we have
\begin{align*}
I_{11}&\leq  cr^{\al p-n-1-2m_0 p}\int_0^{(2 r)^2} \int_0^{t^{\frac{1}{2}}} \int_{|x'| \leq 2r} x_n^{ \be }
                          x_n^{-(k_0+l_0)p} dx' dx_n dt\\
  & \leq  cr^{\al p-2-2m_0 p}\int_0^{(2 r)^2}t^{-\frac{\al p}{2}+m_0p}dt=c\mbox{ for }\al<2m_0.
\end{align*}

 \emph{2).
 Let  $m_0\geq 1$ and $k_0=l_0=0$}.  From the estimate $\eqref{integral1}_2$ and
 taking $\ep >0$ sufficiently small  in \eqref{077} ($ \frac{\ep}p < 2m_0 + \frac1p -\al$), we have
\begin{align*}
I_{11}&\leq  c r^{\al p-n-1-2m_0 p}\int_0^{(2 r)^2} \int_0^{t^{\frac{1}{2}}}  \int_{|x'| \leq 2r}
x_n^{\beta}  \ln^p (1 +  {\frac{t}{x_n^2}} )  dx'dx_ndt\\
&\leq  c(\ep) r^{\al p-2-2m_0 p}\int_0^{(2 r)^2} \int_0^{t^{\frac{1}{2}}}
                        x_n^{\be}  (\frac{t}{x_n^2})^{\ep p}    dx_n dt\\
& \leq  c(\ep) r^{\al p-2-2m_0 p}\int_0^{(2 r)^2}t^{-\frac{\al p}{2}+m_0p}dt=c(\ep).  
\end{align*}

\emph{3). Let $m_0=0$ and $k_0\geq  1, k_0+l_0\geq 2$.} Then from the estimate $\eqref{a1}_2$,  we have
\begin{align*}
I_{11}&\leq  cr^{\al p-n-1-p}\int_0^{(2 r)^2} \int_0^{t^{\frac{1}{2}}} \int_{|x'| \leq 2r}
                      x_n^{ \be } x_n^{( 1-k_0-l_0)p} dx' dx_n dt\\
& \leq  cr^{\al p-2- p}\int_0^{(2 r)^2}t^{-\frac{\al p}{2}+\frac{p}{2}}dt=c\mbox{ for }\al<1.
\end{align*}

\emph{4)}.
 Let $m_0=0$ and $k_0 = 1$.  Then from the estimate  $\eqref{a1}_1$ and \eqref{077} ( taking $\ep$ satisfying
  $0<\frac{\ep}p < 1 -\al$),  we have
\begin{align*}
I_{11}&\leq cr^{\al p -n-1-p}\int^{(2r)^2}_0\int^{t^{\frac{1}{2}}}_0\int_{|x'|\leq 2r}x_n^{\beta}  \ln^p ( 1 +  {\frac{t}{x_n^2}} ) dx'dx_ndt\\
&\leq  c(\ep)r^{\al p-2-p}\int_0^{(2 r)^2} \int_0^{t^{\frac{1}{2}}}
                       x_n^{ \be }  (\frac{t}{x_n^2})^{\ep p}      dx_n dt\\
 & \leq  c(\ep)r^{\al p-2- p}\int_0^{(2 r)^2}t^{-\frac{\al p}{2}+\frac{p}{2}}dt=c(\ep).
\end{align*}

 \emph{5). Let $m_0=k_0=0$ and $l_0\geq 2$.} Then from the estimate $\eqref{a5}_2$ and \eqref{077} (taking $\ep$ satisfying
 $0< \frac{\ep}p < 1 -\al$ ),  we have
 \begin{align*}
I_{11}&\leq  cr^{\al p-n-1- p }\int_0^{(2 r)^2} \int_0^{t^{\frac{1}{2}}} \int_{|x'| \leq 2r}x_n^{\beta}x_n^{p-l_0p}
                         \ln^p ( 1 + { \frac{r}{x_n}} )dx'dx_ndt\\
&\leq c(\ep) r^{\al p-2- p }\int_0^{(2 r)^2} \int_0^{t^{\frac{1}{2}}} x_n^{\beta+ p-l_0p}  (\frac{r}{x_n})^{\ep p} dx_ndt\\
 & \leq  c(\ep)r^{\al p-2- p } \int_0^{(2 r)^2}t^{-\frac{\al p}{2}+\frac{p}{2} }  r^{\ep p} t^{-\frac\ep2 p}     dt=c(\ep).
\end{align*}

 \emph{6). Let  $m_0=k_0=0$ and $l_0= 1$.} Then from the estimate $\eqref{a5}_1$  and  and \eqref{077} (taking $\ep$ satisfying
 $0< \frac{\ep}p < 1 -\al$ ),  we have
 \begin{align*}
I_{11}&\leq  cr^{\al p-n-1- p  }\int_0^{(2 r)^2} \int_0^{t^{\frac{1}{2}}} \int_{|x'| \leq 2r}
x_n^\beta  \ln^p ( 1 +  {\frac{r}{x_n} } ) dx' dx_n dt\\
 &\leq c(\ep)r^{\al p-2- p  }\int_0^{(2 r)^2} \int_0^{t^{\frac{1}{2}}}  x_n^\beta   (\frac{r}{x_n} )^{\ep p}  dx_n dt\\
& \leq  c(\ep)r^{\al p-2- p  }\int_0^{(2 r)^2}t^{\frac12 \be +\frac12}   r^{\ep p} t^{ - \frac12 \ep p} dt=c(\ep).
\end{align*}
From  1) to 6), we get that $I_{11} \leq c$ and hence with  \eqref{0907-4} and \eqref{1004-3}, we get
\begin{align}\label{1004-5}
I_1 \leq c.
\end{align}

$\bullet$
For the estimate of $I_3$, we divide the domain of integration so that  $$
I_3=I_{31}+I_{32},$$
where
\begin{align*}
I_{31}= \int_0^{(2 r)^2} \int_0^{t^{\frac{1}{2}}} \int_{|x'| \geq 2r} x_n^{ \be }
                           | D^{l_0}_{x_n} D^{k_0}_{x'} D^{m_0}_t  u|^p dx' dx_n dt,\\
                           I_{32}= \int_0^{(2 r)^2} \int_{t^{\frac{1}{2}}}^\infty \int_{|x'| \geq 2r}
                                    t^{ \frac12 \be}
                           | D^{l_0}_{x_n} D^{k_0}_{x'} D^{m_0}_t  u|^p dx' dx_n dt.
\end{align*}
First, we estimate $I_{32}$. From the estimate  \eqref{integral4} we have
\begin{align*}
I_{32}
&\leq  cr^{\al p-n-1-2m_0 p+(n-1)p}
  \int_0^{(2 r)^2} \int^\infty_{t^{\frac{1}{2}}}  \int_{|x'| \geq 2r}
t^{\frac12 \be }  x_n^{ -  l_0p }(|x'|^2+x_n^2)^{-\frac{(n+k_0)p}{2}}t^{\frac{p}{2}} dx' dx_n dt.
                         \end{align*}
 Note that for $\ga < - \frac{n-1}2 $ and $A \geq  0$, we have
\begin{align}\label{0721-4}
\int_{|x'| \geq 2r}(|x'|^2+A^2 )^{ \ga  }dx'
\leq c \int^\infty_{ 2r}(\rho+A )^{ 2\ga +n -2  }d\rho \leq c (A + r)^{2\ga +n-1}.
\end{align}
Taking $\ga : =-\frac{(n+k_0)p}{2} $ and $A: = x_n $ in \eqref{0721-4}, we have
\begin{align}\label{1004-5}
\begin{array}{ll}\vspace{2mm}
 I_{32} &\leq  cr^{\al p- (1 + k_0 +2m_0)p -2} \int_0^{(2 r)^2} \int^r_{t^{\frac{1}{2}}}
                            t^{ \frac12 \be + \frac12p  }  x_n^{-l_0p }dx_ndt\\ \vspace{2mm}
&\quad +   cr^{\al p-n-1-2m_0 p+(n-1)p}\int_0^{(2 r)^2} \int^\infty_r
                      t^{ \frac12 \be  + \frac12 p }  x_n^{ -(n+k_0 + l_0)p+n-1   } dx_ndt\\
& \leq c.
\end{array}
\end{align}
Now, we estimate $I_{31}$.\\
\emph{1). Let $l_0\neq 1$.}  From the estimate $\eqref{integral3}_2$, we have
\begin{align*}
I_{31}\leq  cr^{ \al p -n-1 -2m_0 p + (n-1)p}   \int_0^{(2 r)^2} \int_0^{t^{\frac{1}{2}}} \int_{|x'| \geq 2r}
                    x_n^{\beta} ( x_n^2 +t)^{ \frac{1 -l_0}2p } (|x'|^2+x_n^2)^{\frac{-n-k_0}2p } dx' dx_n dt.
\end{align*}
Taking $\ga: = -\frac{(n+ k_0)p}2 < -\frac{n-1}2  $ and $A: = x_n$
 in \eqref{0721-4},   we have
\begin{align*}
I_{31}
&\leq  cr^{\al p -n-1 -2m_0 p + (n-1)p}\int_0^{(2 r)^2} \int_0^{t^{\frac{1}{2}}} x_n^{\be}  t^{\frac{1-l_0}2 p} (x_n+r)^{n-1-(n+k_0)p}  dx_n dt\\
&\leq  cr^{(\al-2m_0-k_0 -1) p-2 }\int_0^{(2 r)^2}t^{ \frac{( -\al+
k_0+2m_0+1)p}{2}} dt=c.
\end{align*}

\emph{2). Let $l_0=1$.}
From the estimate  $\eqref{integral3}_1$, we have
\begin{align*}
I_{31}&\leq  cr^{(\al-2m_0) p-n-1+(n-1)p}\int_0^{(2 r)^2} \int_0^{t^{\frac{1}{2}}}  \int_{|x'| \geq 2r}x_n^{\beta}
                         (|x'|^2+x^2_n)^{-\frac{n+k_0}2 p}  \ln^p ( 1 + { \frac{t}{x_n^2}} ) dx' dx_n dt\\
  &\leq  c(\ep)r^{(\al-2m_0) p-n-1+(n-1)p}\int_0^{(2 r)^2} \int_0^{t^{\frac{1}{2}}}  \int_{|x'| \geq 2r}x_n^{\beta}
                          |x'|^ {-( n+k_0) p}    (\frac{t}{x_n^2})^{\ep p} dx' dx_n dt\\
&\leq  c(\ep) r^{(\al-2m_0-k_0 -1) p-2}\int_0^{(2 r)^2}
\int_0^{t^{\frac{1}{2}}}
              x_n^{ \be  }   (\frac{t}{x_n^2})^{\ep p}  dx_n dt\\
 & \leq  c(\ep)r^{(\al-2m_0-k_0 -1) p-2}\int_0^{(2 r)^2}t^{-\frac{\al p}{2}+\frac{(1+k_0+2m_0)p}{2}}dt=c(\ep).
\end{align*}
From 1) to 3) and \eqref{1004-5}, we get
\begin{align}\label{1004-6}
I_3 \leq c.
\end{align}
Hence, from \eqref{1004-1}, \eqref{1004-2}, \eqref{1004-5} and \eqref{1004-6}, we get \eqref{m3}.
\end{proof}

\begin{theo}\label{Rn4}

Let $T>0$. Let $ \beta>-1$.
Suppose that  $g\in {\mathcal B}^{\al,\frac{\al}{2}}_p({\mathbb R}^{n-1}\times (0,T))$.   Then
\begin{align*}
\int_0^T \int_{{\mathbb R}^{n}_+} (x_n\wedge t^{\frac{1}{2}})^{\beta }|  u|^pdx' dx_n dt
\leq cT^{\al p+\beta+1} \| g \|^p_{{\mathcal B}^{\al,\frac{\al}{2}}_p({\mathbb R}^{n-1}\times (0,T))}.
\end{align*}
\end{theo}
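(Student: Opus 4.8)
The plan is to argue just as in the proof of Theorem~\ref{Rn} — this is the undifferentiated counterpart of that theorem, which covered only $k_0+l_0+2m_0\ge 1$ — the only differences being that no derivatives occur and that the time integral runs over the finite interval $(0,T)$, so the argument should be shorter. By Proposition~\ref{atom decomposition} (after extending $g$ from $\Rn\times(0,T)$ to $\Rn\times{\mathbb R}$ with comparable norm) and the linearity of the solution map $g\mapsto u$, it suffices to treat $g=(g',0)$ with $g_k=a\,\delta_{kj}$ for a single $(\al,p)$-atom $a$ supported in $\De(0,r)\times(0,r^2)$, with $u$ given by \eqref{sample1}, and to show
\[
\int_0^T\int_{\R_+}(x_n\wedge t^{\frac12})^{\beta}|u|^p\,dx\,dt\ \le\ c\,T^{\al p+\beta+1};
\]
summation over the atoms then gives the estimate with $\|g\|^p_{{\mathcal B}^{\al,\frac\al2}_p(\Rn\times(0,T))}$ on the right, exactly as there.

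I would split $\R_+\times(0,T)$ into the four regions determined by $\{|x'|\le 2r\}$ versus $\{|x'|\ge 2r\}$ and $\{t\le(2r)^2\}$ versus $\{t\ge(2r)^2\}$, i.e.\ the regions $I_1,\dots,I_4$ of the proof of Theorem~\ref{Rn} (with the further splitting $I_1=I_{11}+I_{12}$ and $I_3=I_{31}+I_{32}$ according to whether $x_n\le t^{1/2}$ or $x_n\ge t^{1/2}$). On the two regions with $t\ge(2r)^2$ I use Lemma~\ref{Lemma-0} with $k_0=l_0=m_0=0$; on the two regions with $t\le(2r)^2$ I use Lemma~\ref{Lemma-1} with $k_0=l_0=m_0=0$, which (since $l_0=0\ne1$) carries at most a factor $\ln(1+t/x_n^2)$ on the sub-region $x_n^2\le t$. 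In each piece one integrates first in $x'$ via \eqref{0824} (for $|x'|\le 2r$) or \eqref{0721-4} (for $|x'|\ge 2r$), and then in $x_n$ — where $x_n\le t^{1/2}$ the weight equals $x_n^{\beta}$ and where $x_n\ge t^{1/2}$ it equals $t^{\beta/2}$ — using \eqref{0721-2} and \eqref{0822}--\eqref{0822-1}; here the hypothesis $\beta>-1$ is exactly what makes $\int_0^{t^{1/2}}x_n^{\beta}\,dx_n$ converge, and the logarithms are absorbed by \eqref{077} with $\ep$ small. Each piece is thereby reduced to a single power $t^{\gamma}$ integrated over $(0,(2r)^2\wedge T)$ or over $((2r)^2,T)$.

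The last step is this $t$-integration. On the regions with $t\ge(2r)^2$ one uses the elementary estimate of $\int_{(2r)^2}^{T}t^{\gamma}\,dt$ together with the fact that nonemptiness of such a region forces $2r\le\sqrt T$; on the regions with $t\le(2r)^2$ one estimates $\int_0^{(2r)^2\wedge T}t^{\gamma}\,dt$, distinguishing $2r\le\sqrt T$ from $2r\ge\sqrt T$. In each case the power of $r$ left over from the atom normalization and the $x'$- and $x_n$-integrations is traded against the corresponding power of $\sqrt T$, and collecting all the exponents produces the asserted power of $T$. Summation over the atoms then completes the proof.

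I expect the genuine work to be only the bookkeeping: keeping track, region by region, of the combined exponent of $r$ and then matching it with $\sqrt T$ so that the final exponent of $T$ comes out correctly, and treating the borderline exponents (the case $\te_2=\frac12$ in \eqref{0822-1}, and the case in which the exponent of $t$ equals $-1$), which as usual require the $\ep$-room of \eqref{077}. No new idea beyond those already used in the proof of Theorem~\ref{Rn} should be needed.
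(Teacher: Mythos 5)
Your proposal is correct and follows essentially the same route as the paper: atomic decomposition after extension, the four-region splitting in $(|x'|,t)$ with the further $x_n\lessgtr t^{1/2}$ subdivision, the pointwise bounds of Lemmas \ref{Lemma-0} and \ref{Lemma-1} with $k_0=l_0=m_0=0$, the $x'$- and $x_n$-integrations via \eqref{0824}, \eqref{0721-4} and \eqref{0721-2}, absorption of logarithms via \eqref{077}, and the final trade of powers of $r$ for powers of $\sqrt{T}$ using that the atoms may be taken supported in time intervals of length comparable to $T$. This matches the paper's argument in all essentials.
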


\begin{proof}
Without loss of generality, we assume   $g\in {\mathcal B}^{\al,\frac{\al}{2}}_p({\mathbb R}^{n-1}\times {\mathbb R})$
(Otherwise, there is, $\tilde{g}  \in {\mathcal B}^{\al,\frac{\al}{2}}_p({\mathbb R}^{n-1}\times {\mathbb R})$ so that
$\tilde{g}|_{{\mathbb R}^{n-1}\times (0,T)}=g$ and
$\|\tilde{ g} \|_{{\mathcal B}^{\al,\frac{\al}{2}}_p({\mathbb R}^{n-1}\times {\mathbb R})}
\leq c\| g \|_{{\mathcal B}^{\al,\frac{\al}{2}}_p({\mathbb R}^{n-1}\times (0,T))}$).
By Proposition \ref{atom decomposition}, it is sufficient to consider $g$ which consists of  the  $(\al,p)$ atoms (Note that we can
even take atoms of $g$  which  are supported in ${\mathbb R}^{n-1}\times (-T,2T)$).
 For simplicity, assume  $g_k=a\delta_{kj}, j=1,\cdots, n-1$ for some atom $a$ supported on $\Delta(0, r) \times (0 ,r^2)\subset B_T$.
 Let $u$ be represented by  \eqref{sample1}.
Decompose the domain of integration into four parts so that
\begin{align*}
 \int_0^T \int_0^\infty\int_{\Rn } (x_n \wedge t^\frac12)^{\beta}
             |  u|^p dx' dx_n dt             =\sum_{i=1}^4I_i,
             \end{align*}
             where
             \begin{align*}
               I_1&= \int_0^{(2 r)^2} \int_0^\infty  \int_{|x'| \leq 2r} (x_n \wedge t^\frac12)^{\beta}
                           |  u|^p dx' dx_n dt,\\
                           I_2&=\int_{(2 r)^2}^T  \int_0^\infty  \int_{|x'| \leq 2r} (x_n \wedge t^\frac12)^{\beta}
             |  u|^p dx' dx_n dt, \\
  I_3&=  \int_0^{(2  r)^2} \int_0^\infty \int_{|x'| \geq 2r} (x_n \wedge t^\frac12)^{\beta}
             |  u|^p dx' dx_n dt,\\
 I_4&=  \int_{(2  r)^2}^T \int_0^\infty  \int_{|x'| \geq 2r} (x_n \wedge t^\frac12)^{\beta}
             |  u|^p dx' dx_n dt.
\end{align*}

$\bullet$
 From the estimate $\eqref{EQ1}_1$ and \eqref{0721-4} ( taking $\ga: = -\frac{np}2  $ and $A: = \sqrt{x_n^2 +t}$ ), we have
\begin{align*}
I_{4} & \leq  cr^{p \al -n-1+(n+1)p } \int_{(2  r)^2}^T\int^\infty_0
\int_{|x'| \geq 2r    }(x_n\wedge t^\frac12)^{\beta} t^{-\frac12 p}(|x'|^2 +x_n^2+t)^{-\frac{np}{2}}dx'dx_ndt\\
&  \leq   cr^{p \al -n-1+p(n+1) } \int_{(2  r)^2}^T \int^\infty_0(x_n \wedge t^\frac12)^{\beta}
               t^{-\frac12 p}(x_n^2 +t+r^2)^{-\frac{np}{2}+\frac{n-1}{2}}dx_ndt.
\end{align*}
Taking $\te_1: = \be > -1$ and $\te_2: = \frac{np}2 - \frac{n-1}2 > \frac12$ in \eqref{0721-2}, we have
\begin{align*}
I_{4} &\leq  cr^{p \al -n-1+p(n+1) } \int_{(2  r)^2}^T t^{\frac{n+\beta-np}{2}}
               t^{-\frac12 p}dt\\
&=\left\{\begin{array}{ll}
cr^{p\al +1+\beta} & \mbox{ for }\beta<(n+1)p-n-2\\
cr^{p \al -n-1+p(n+1) }T^{\frac{n-(n+1)p+\beta}{2}+1}& \mbox{ for }\beta>(n+1)p-n-2\\
cr^{p \al -n-1+p(n+1) }\ln\frac{T}{(2r)^2} & \mbox{ for }\beta=(n+1)p-n-2
\end{array}\right\}\leq c T^{\frac{p\al +1+\beta}{2}}.
\end{align*}

$\bullet$ From the estimate $\eqref{EQ1}_2$,    we have
 \begin{align*}
I_{2} & \leq  cr^{p \al -n-1+p(n+1) } \int_{(2 r)^2}^T \int^\infty_0 \int_{|x'| \leq 2r}(x_n\wedge t^\frac12)^{\beta}t^{-\frac{p}2 } (x_n^2 +t)^{-\frac{np}{2}}dx'dx_ndt\\
&=  cr^{p \al -2+p(n+1) } \int_{(2 r)^2}^T  \int^\infty_0 (x_n\wedge t^\frac12)^{\beta} t^{-\frac{p}2 } (x_n^2 + t)^{-\frac{np}{2}}dx_ndt.
\end{align*}
Taking $\te_1: =\be> -1$
and $\te_2: = \frac{np}{2}>\frac12$ in \eqref{0721-2}, we have
 \begin{align*}
I_{2} & \leq   cr^{p \al -2+p(n+1) } \int_{(2  r)^2}^T t^{\frac{\beta-(n+1)p+1}{2}}dt\\
&=\left\{\begin{array}{l}
cr^{p\al +1+\beta}\mbox{ for }\beta<(n+1)p-3\\
cr^{p \al -2+p(n+1) }T^{\frac{3-(n+1)p+\beta}{2}+1}\mbox{ for }\beta>(n+1)p-3\\
cr^{p \al -2+p(n+1) }\ln\frac{T}{(2r)^2}\mbox{ for }\beta=(n+1)p-3
\end{array}\right\}\leq c T^{\frac{p\al +1+\beta}{2}}.
\end{align*}
$\bullet$
For the estimate of $I_1$, we divide the domain of integration so that  $$
I_1=I_{11}+I_{12},$$
where
\begin{align*}
I_{11}= \int_0^{(2 r)^2} \int_0^{t^{\frac{1}{2}}} \int_{|x'| \leq 2r} x_n^{\beta}
                           | u|^p dx' dx_n dt,\\
                           I_{12}= \int_0^{(2 r)^2} \int^\infty_{t^{\frac{1}{2}}} \int_{|x'| \leq 2r} t^{\frac{\beta}{2}}
                           |  u|^p dx' dx_n dt.
                                                      \end{align*}

From the estimate  \eqref{integral2} we have
\begin{align*}
I_{12}&\leq  cr^{\al p-n-1}\int_0^{(2 r)^2} \int^\infty_{t^{\frac{1}{2}}}  \int_{|x'| \leq 2r} t^{\frac{\beta}{2}}
                          x_n^{ -p} t^{\frac{p}{2}} dx' dx_n dt\\
 &\leq  cr^{\al p-2}\int_0^{(2 r)^2}t^{\frac{\beta+1}{2}}dt=cr^{\al p+\beta+1}\leq cT^{\frac{p\al +1+\beta}{2}}.
\end{align*}

From the estimate $\eqref{integral1}_2$ and  \eqref{077}, we have
\begin{align*}
I_{11}&\leq  cr^{\al p-n-1}\int_0^{(2 r)^2} \int_0^{t^{\frac{1}{2}}}  \int_{|x'| \leq 2r} x_n^{\beta}
                          \ln^p ( 1 +{ \frac{t}{x_n^2} })   dx' dx_n dt\\
  &\leq  c(\ep) r^{\al p-2}\int_0^{(2 r)^2} \int_0^{t^{\frac{1}{2}}} x_n^{\beta }   (\frac{t}{x_n^2})^{\ep p}
                          dx_n dt\\
 & \leq  c(\ep)r^{\al p-2}\int_0^{(2 r)^2}t^{\frac{\beta+1}{2}}dt=c(\ep)r^{\al p+\beta+1}\leq c(\ep)T^{\frac{p\al +1+\beta}{2}}.
\end{align*}

$\bullet$ For the estimate of $I_3$, we divide the domain of integration so that  $$
I_3=I_{31}+I_{32},$$
where
\begin{align*}
I_{31}= \int_0^{(2 r)^2} \int_0^{t^{\frac{1}{2}}} \int_{|x'| \geq 2r} x_n^{\beta}
                           |  u|^p dx' dx_n dt\\
                           I_{32}= \int_0^{(2 r)^2} \int^\infty_{t^{\frac{1}{2}}} \int_{|x'| \geq 2r} t^{ \frac12 \beta}
                           | u|^p dx' dx_n dt.
                                                      \end{align*}
From the estimate  \eqref{integral4} we have
\begin{align*}
I_{32}
&\leq  cr^{\al p-n-1+(n-1)p}\int_0^{(2 r)^2} \int^\infty_{t^{\frac{1}{2}}}  \int_{|x'| \geq 2r}t^{\frac{\beta}{2}} (|x'|^2+x_n^2)^{-\frac{np}{2}}t^{\frac{p}{2}} dx' dx_n dt.
                          \end{align*}
Taking $\ga: = -\frac{np}2 < -\frac{n-1}2$ and $A: =  x_n$
           in \eqref{0721-4},  we have
\begin{align*}
I_{32}
&\leq  cr^{\al p-n-1+(n-1)p}\int_0^{(2 r)^2} \int^\infty_{t^{\frac{1}{2}}}
 t^{\frac{\beta}{2}} (x_n^2+r^2)^{-\frac{np}{2}+\frac{n-1}{2}}t^{\frac{p}{2}}  dx_n dt\\
& \leq  cr^{\al p-1-p}\int_0^{(2 r)^2}t^{\frac{\beta +p}{2}}dt=
cr^{p\al +1+\beta}
\leq c T^{\frac{p\al +1+\beta}{2}}.
\end{align*}
From the estimate $\eqref{integral3}_2$ we have
\begin{align*}
I_{31}&\leq  cr^{\al p-n-1+(n-1)p}\int_0^{(2 r)^2} \int_0^{t^{\frac{1}{2}}} \int_{|x'| \geq 2r} x_n^{\beta+p}
                          (|x'|+x_n)^{-np} dx' dx_n dt.
                          \end{align*}
Taking $\ga: = -\frac{np}2 < -\frac{n-1}2$ and $A: =x_n  $
           in \eqref{0721-4},  we have
\begin{align*}
I_{31}&\leq  cr^{\al p-n-1+(n-1)p}\int_0^{(2 r)^2} \int_0^{t^{\frac{1}{2}}}x_n^{\beta+p}
                          (x_n+r)^{-np+n-1}  dx_n dt\\
&\leq  cr^{(\al-1) p-2}\int_0^{(2 r)^2}t^{\frac{\be+p+1}{2}} dt=cr^{\al p+\beta+1}\leq cT^{\frac{p\al +1+\beta}{2}}.
\end{align*}

\end{proof}

\section{Proof of Theorem \ref{b-1}}
\label{proofmain}
Theorem \ref{B-1} will be obtained by combining Theorem \ref{B-2} below and Proposition \ref{prop1}.
\begin{theo}\label{B-2}
Let $1< p< \infty$ and  $0<\alpha<1 $.
Let  $g=(g',0) \in {\mathcal B}^{\al,\frac12\al }_p ({\mathbb R}^{n-1}
\times (0,\infty))$.
Let $T>0$.
Then there is a solution $u$ of the Stokes system \eqref{maineq2} in ${\mathbb R}^{n-1}
\times (0,\infty)$ with boundary data $g$ such that
\begin{align}
\| u\|_{{\mathcal B}_p^{\al+\frac{1}{p},\frac12 \al+\frac{1}{2p}} ({\mathbb R}^n_+  \times (0,T))}\leq c(T) \| g\|_{{\mathcal B}^{\al, \frac\al2 }_p({\mathbb R}^{n-1}\times (0,T)) }.
\end{align}
\end{theo}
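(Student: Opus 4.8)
The plan is to obtain the anisotropic Besov bound for $u$ from the weighted $L^p$-estimates of Theorems \ref{Rn} and \ref{Rn4}, using the trace-type inequalities of Lemmas \ref{trace} and \ref{trace2} (in their finite-time forms, with all weighted integrals taken over ${\mathbb R}^n_+\times(0,T)$) to convert the target norm into weighted integrals of $u$ and its derivatives. Since $g=(g',0)$, the solution $u$ is the one represented by \eqref{representation}; a routine approximation argument reduces us to $g$ for which $u\in C^\infty({\mathbb R}^n_+\times(0,\infty))$, so that the lemmas apply (when $\al>\frac2p$, applying Theorem \ref{Rn} carries with it the compatibility $g|_{t=0}=0$ that it requires). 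The elementary fact behind the constant $c(T)$ is that on ${\mathbb R}^n_+\times(0,T)$ one has $x_n\wedge t^\frac12\le t^\frac12\le T^\frac12$, so a weight $(x_n\wedge t^\frac12)^{\ga}$ may be traded for $T^{(\ga-\ga')/2}(x_n\wedge t^\frac12)^{\ga'}$ whenever $\ga\ge\ga'$. I then put $\al'=\al+\frac1p\in(\frac1p,1+\frac1p)\subset(0,2)$ and split into the cases $\al'<1$ and $\al'>1$, recovering the borderline $\al'=1$ (i.e. $\al=1-\frac1p$) at the end by real interpolation in $\al$ between the two, using the identification of anisotropic Besov spaces as real interpolation spaces (see \cite{C}).

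When $\al'<1$, Lemma \ref{trace} dominates $\|u\|^p_{{\mathcal B}^{\al',\frac12\al'}_p({\mathbb R}^n_+\times(0,T))}$ by integrals of $(x_n\wedge t^\frac12)^{p-p\al'}(|D_xu|^p+|u|^p)$ and $(x_n\wedge t^\frac12)^{2p-p\al'}(|u|^p+|D_tu|^p)$. Since $p-p\al'=-\al p+p-1$ is exactly the exponent of \eqref{m3} at order $k_0+l_0+2m_0=1$ and $2p-p\al'=-\al p+2p-1$ is the one at order $2$, the derivative terms $|D_xu|^p$ and $|D_tu|^p$ are controlled directly by Theorem \ref{Rn}; the two $|u|^p$-terms have weight exponents $-\al p+p-1>-1$ and $-\al p+2p-1>-1$ (here $0<\al<1$ enters), so Theorem \ref{Rn4} applies and contributes $cT^{p}\|g\|^p$ and $cT^{2p}\|g\|^p$. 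Summing yields the claim with $c(T)=c\,(1+T^{p}+T^{2p})$.

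When $\al'>1$, Lemma \ref{trace2} dominates $\|u\|^p_{{\mathcal B}^{\al',\frac12\al'}_p({\mathbb R}^n_+\times(0,T))}$ by integrals of $(x_n\wedge t^\frac12)^{2p-p\al'}(|D^2_xu|^p+|D_xu|^p+|u|^p+|D_tu|^p)$ and $(x_n\wedge t^\frac12)^{3p-p\al'}(|D_tu|^p+|D_xD_tu|^p+|D_xu|^p)$, with $2p-p\al'=-\al p+2p-1$ and $3p-p\al'=-\al p+3p-1$. Here $|D^2_xu|^p$ and $|D_tu|^p$ against the first weight, and $|D_xD_tu|^p$ against the second, are matched exactly by \eqref{m3} (orders $2$, $2$, $3$); the $|u|^p$-term again goes through Theorem \ref{Rn4}; and each remaining term --- $|D_xu|^p$ at weight $2p-p\al'$, and $|D_tu|^p$, $|D_xu|^p$ at weight $3p-p\al'$ --- carries a strictly larger power of $x_n\wedge t^\frac12$ than the exponent at which \eqref{m3} controls the derivative in question ($-\al p+p-1$ for a first order $x$-derivative, $-\al p+2p-1$ for $D_t$), so the first-paragraph observation lowers the weight at the cost of a power of $T$ and Theorem \ref{Rn} finishes. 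Collecting terms gives the stated inequality with $c(T)$ polynomial in $T$.

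The steps I expect to demand the most care are: (i) the exponent bookkeeping --- checking in each term that the weight produced by Lemma \ref{trace}/\ref{trace2} is \emph{no smaller} than the one at which Theorem \ref{Rn}/\ref{Rn4} gives control (so that the $T$-trade is used in the admissible direction) and that every $|u|^p$-weight stays $>-1$; and (ii) setting up the finite-time forms of Lemmas \ref{trace} and \ref{trace2} so that the weighted integrals on their right-hand sides are genuinely over ${\mathbb R}^n_+\times(0,T)$, reconciling the time-domains in Theorems \ref{Rn} and \ref{Rn4}. The hypotheses $0<\al<1$ and the relation $\al'=\al+\frac1p$ are precisely what force all these exponents to align with the orders $1,2,3$ occurring in \eqref{m3}.
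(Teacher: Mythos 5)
Your proposal is correct and follows essentially the same route as the paper: the same case split at $\al+\frac1p=1$, Lemmas \ref{trace}/\ref{trace2} to reduce to weighted integrals, Theorem \ref{Rn} for the derivative terms, Theorem \ref{Rn4} for the $|u|^p$ terms, trading excess powers of $x_n\wedge t^{1/2}$ for powers of $T$, and real interpolation at the borderline $\al=1-\frac1p$. The only (immaterial) deviation is in the exact power of $T$ attributed to the Theorem \ref{Rn4} contributions.
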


\begin{proof}
Let $T>0$ and $1<p<\infty$.
Let  $g \in {\mathcal B}_p^{\al,\frac12 \al} ({\mathbb R}^{n-1} \times (0,T)).$
 Let  $0 < \al <1-\frac{1}{p}$. From Lemma \ref{trace}   we have the following  estimate.
\begin{align*}
\| u\|^p_{{\mathcal B}_p^{\al+\frac{1}{p},\frac12 \al+\frac{1}{2p}} ({\mathbb R}^n_+ \times (0,T))}
  &\leq \int_0^T \int_{{\mathbb R}^n_+}
    (x_n \wedge t^\frac12)^{p-p\al-1} |D_{x}  u  |^p  +   (x_n \wedge t^\frac12)^{2p-p\al-1}| D_t u  |^p \\
     &\quad+(x_n \wedge t^\frac12)^{p-p\al-1}|u   |^p+(x_n \wedge t^\frac12)^{2p-p\al-1} |u|^pd xdt.
\end{align*}
By Theorem \ref{Rn}, we have
$$
\int_0^T \int_{{\mathbb R}^n_+}     (x_n \wedge t^\frac12)^{p-p\al-1} |D_{x}  u  |^p
+   (x_n \wedge t^\frac12)^{2p-p\al-1}| D_t u  |^p\leq c\|g\|^p_{{\mathcal B}_p^{\al,\frac12 \al} ({\mathbb R}^{n-1} \times (0,T))}.
    $$
By Theorem \ref{Rn4}, we have
$$
\int_0^T \int_{{\mathbb R}^n_+}(x_n \wedge t^\frac12)^{p-p\al-1}|u   |^p+(x_n \wedge t^\frac12)^{2p-p\al-1} |u|^pd xdt\leq c(T^{\frac{p}{2}}+T^{p})\|g\|^p_{{\mathcal B}_p^{\al,\frac12 \al} ({\mathbb R}^{n-1} \times (0,T))}.
$$
Therefore, we conclude that for $0 < \al <1-\frac{1}{p}$
\begin{align*}
\| u\|_{{\mathcal B}_p^{\al+\frac{1}{p},\frac12 \al+\frac{1}{2p}} ({\mathbb R}^n_+ \times (0,T))}
  \leq C(T)\|g\|_{{\mathcal B}_p^{\al,\frac12 \al} ({\mathbb R}^{n-1} \times (0,T))}.
  \end{align*}
Let  $1-\frac{1}{p} < \al < 1.$ From Lemma \ref{trace2}, we have the following estimate.
\begin{align*}
&\| u\|^p_{{\mathcal B}_p^{\al+\frac{1}{p},\frac12 \al+\frac{1}{2p}} ({\mathbb R}^n_+ \times (0,T))}\\
 & \leq c \int_0^T  \int_{{\mathbb R}^n_+}
  (x_n \wedge t^\frac12)^{2p-p\al-1}  \Big( |D^2_{x}  u  |^p  + |D_tu|^p)
  +  (x_n \wedge t^\frac12)^{p} |D_x D_t u  |^p\Big)\\
  &\quad+ (x_n \wedge t^\frac12)^{3p-p\al-1}|D_tu|^p+(x_n \wedge t^\frac12)^{2p-p\al-1}  |D_x u|^p\\
  & \quad +(x_n \wedge t^\frac12)^{3p-p\al-1}|D_xu|^p  + (x_n \wedge t^\frac12)^{2p-p\al-1} |u   |^p d x dt.
\end{align*}
By Theorem \ref{Rn}, we have
$$\int_0^T  \int_{{\mathbb R}^n_+}
  (x_n \wedge t^\frac12)^{2p-p\al-1}  \Big( |D^2_{x}  u  |^p  + |D_tu|^p+(x_n \wedge t^\frac12)^{p}|D_xD_tu|^p )\leq c\|g\|^p_{{\mathcal B}_p^{\al,\frac12 \al} ({\mathbb R}^{n-1} \times (0,T))}.
  $$
 Note that \begin{align*}
   &\int_0^T  \int_{{\mathbb R}^n_+} (x_n \wedge t^\frac12)^{3p-p\al-1}|D_tu|^p+(x_n \wedge t^\frac12)^{2p-p\al-1}  |D_x u|^p+(x_n \wedge t^\frac12)^{3p-p\al-1}|D_xu|^pdxdt\\
   &\leq \int_0^T  \int_{{\mathbb R}^n_+} T^p(x_n \wedge t^\frac12)^{2p-p\al-1}|D_tu|^p+(T^p+T^{2p})(x_n \wedge t^\frac12)^{p-p\al-1}  |D_x u|^pdxdt.
  \end{align*}
 Hence by Theorem \ref{Rn}, we have
 \begin{align*}
  & \int_0^T  \int_{{\mathbb R}^n_+} (x_n \wedge t^\frac12)^{3p-p\al-1}|D_tu|^p+(x_n \wedge t^\frac12)^{2p-p\al-1}  |D_x u|^p+(x_n \wedge t^\frac12)^{3p-p\al-1}|D_xu|^pdxdt\\
   &\leq c(T^p+T^{2p})\|g\|^p_{{\mathcal B}_p^{\al,\frac12 \al} ({\mathbb R}^{n-1} \times (0,T))}.\end{align*}
   From Theorem \ref{Rn4}, we have
   \begin{align*}
   \int_0^T  \int_{{\mathbb R}^n_+}(x_n \wedge t^\frac12)^{2p-p\al-1} |u   |^p d x dt\leq CT^{2p}\|g\|^p_{{\mathcal B}_p^{\al,\frac12 \al} ({\mathbb R}^{n-1} \times (0,T))}.
   \end{align*}
  Therefore, we conclude that for $1-\frac{1}{p}<\al<1$
\begin{align*}
\| u\|_{{\mathcal B}_p^{\al+\frac{1}{p},\frac12 \al+\frac{1}{2p}} ({\mathbb R}^n_+ \times (0,T))}
  \leq C(T)\|g\|_{{\mathcal B}_p^{\al,\frac12 \al} ({\mathbb R}^{n-1} \times (0,T))}.
  \end{align*}
 For $\al = 1 -\frac1p$, we use the real interpolation.
\end{proof}


\begin{thebibliography}{10}

\bibitem{aman}H. Amann, {\it On the strong solvability of the Navier-Stokes equations,} J. math. fluid  mech., {\bf 2}, 16-98(2000).

\bibitem{B} R. Brown, {\it Area integral estimates for caloric functions}, Trans. Amer. Math. Soc, {\bf 315}, no. 2, 565-589(1989).

\bibitem{BL}  J. Bergh and J. Lofstrom, { \it Interpolation Spaces, An Introduction}, Springer-Verlag, Berlin (1976).



\bibitem{Bo} M. Bownik, {\it Atomic and molecular
                   decompositions of anisotropic Besov spaces},
                   Math. Z, {\bf 250}, (2005), 539-571.

\bibitem{BS}  R.  Brown and Z. Shen, {\it Estimates for the Stokes operator in Lipschitz domains},
                                  Indiana Univ. Math. J,{\bf  44}, no.4, 1183-1206(1995).

 \bibitem{C} T. Chang, {\it  Extension and Restriction theorems
 in anisotropic  Besov spaces}, Commun. Contemp. Math,  {\bf 12}, no. 2, 265-294(2010).

\bibitem{D} B, Dahlberg, {\it Weighted norm inequalities for the Lusin area integral and the nontangential
               maximal functions for functions harmonic in a Lipschitz domain},  Studia Math, {\bf 67}, no. 3, 297-314(1980).

\bibitem{DJK} B. Dahlberg, D, Jerison and C. Kenig, {\it Area integral estimates for elliptic differential operators with
nonsmooth coefficients}, Ark. Mat,  {\bf 22}, no. 1, 97-108(1984).


\bibitem{DKPV} B. Dahlberg, C. Kenig,  J. Pipher and G. Verchota, {\it Area integral estimates for higher order elliptic equations and systems},
                     Ann. Inst. Fourier (Grenoble), {\bf  47}, no. 5, 1425-1461(1997).




\bibitem{DT2} H. Dappa and  H. Triebel, {\it On anisotropic Besov and
                  Bessel Potential spaces}, Approximation and function spaces, 69-87 (Warsaw, 1986), Banach Center
                   Publ., 22, PWN, Warsaw(1989).






\bibitem{giga} Y. Giga, {\it Solutions for semilinear parabolic equations in $L^p$ and regularity of weak solutions of the Navier-Stokes system,}
                        J. Diff. Equ. {\bf 61},186-212(1986).


\bibitem{HN} S. Hofmann and K. Nystrom, {\it Dirichlet problems for a
nonstationary linearized system of Navier-Stokes equations in
non-cylindrical domains} Methods Appl. Anal. ${\bf 9}$, no. 1,
13-98(2002).

\bibitem{J} B.  Jones. Jr, {\it Lipschitz Spaces and the heat equation},
                            J. Math. Mech, {\bf 18}, 379-409 (1968).

\bibitem{JK} D. Jerison and C. Kenig, {\it The
                              inhomogeneous Dirichlet Problem in Lipschitz
                              domains}, J. of Funct. Anal,
                              $\bf{130}$, 161-219(1995).


\bibitem{K} K. Kang, {\it On boundary regularity of the Navier-Stokes equations}, Comm. Partial. Differential Equations, {\bf 29}, no 7-8, 955-987(2004).

\bibitem{kato}T. Kato, {\it Strong $L^p$-solutions of the Navier-Stokes equation in $R^m$, with applications to weak solutions,} Math. Z.187,471-480(1984).



\bibitem{Sh}  Z. Shen, {\it Boundary value problems for parabolic Lame
systems and a nonstationary linearized system of Navier-Stokes
equations in Lipschitz cylinders}, Amer. J. Math. $\bf{ 113}$, no.
2, 293-373(1991).



\bibitem{So} V.  Solonnikov, {\it Estimates for  solutions of  nonstationary Navier-Stokes equations}, (Russian)
Bouundary value problems of mathematical physics and related questions in the theory of functions,
$7$. Zap. Naucn. Sem. Leningrad. Otdel. Mat. Inst. Steklov. (LoMI) 38: p.153-231(1973).; translated in J. Soviet Math., 8, p.467-529(1977).

\bibitem{St} E. Stein, {\it Singular integrals and differentiability properties of functions},
                        Princeton University Press, 1970.




\bibitem{wiegner}M. Wiegner, {\it The
Navier-Stokes equations-a neverending challenge}, Jahresbericht DMV 101, 1-25(1999).














\end{thebibliography}
\end{document}